\newtheorem{theo}{Theorem}
\newtheorem{lemma}[theo]{Lemma}
\newtheorem{prop}[theo]{Proposition}
\newtheorem{claim}[theo]{Claim}
\newtheorem{coro}[theo]{Corollary}
\newtheorem{conj}[theo]{Conjecture}
\newtheorem{rem}[theo]{Remark}
\newtheorem{example}[theo]{Example}
\newcommand{\proof}{\noindent{\it Proof: }}
\newcommand{\proofbox}{\hfill \mbox{ $\Box$}\\}
\newcommand{\qed}{\hfill \mbox{ $\Box$}\\}
\newcommand{\R}{{\mathbb R}}
\newcommand{\Z}{{\mathbb Z}}
\author{
K\'aroly J. B\"or\"oczky\thanks{
Alfr\'ed R\'enyi Institute of Mathematics, Hungarian Academy
  of Sciences, Reltanoda u. 13-15, H-1053 Budapest, Hungary, and
Department of Mathematics, Central European University, Nador u 9, H-1051, Budapest, Hungary, 
E-mail: {\tt boroczky.karoly.j@renyi.mta.hu}, 
supported by NKFIH grants 116451, 121649 and 129630}
\and
M\'at\'e Matolcsi   
\thanks{Alfr\'ed R\'enyi Institute of Mathematics, Hungarian Academy
  of Sciences, Reltanoda u. 13-15, H-1053 Budapest, Hungary, E-mail: {\tt matolcsi.mate@renyi.mta.hu},
and Technical University of Budapest, Egry J. u. 1., H-1111 Budapest,
supported by NKFIH grant 109789}
\and
Imre Z. Ruzsa\thanks{Alfr\'ed R\'enyi Institute of Mathematics, Hungarian Academy
  of Sciences, Reltanoda u. 13-15, H-1053 Budapest, Hungary, E-mail: {\tt ruzsa@renyi.hu},
supported by NKFIH grant 109789}
\and
Francisco Santos\thanks{Depto.~de Matem\'aticas, Estad\'{\i}stica y Computaci\'on, Universidad de Cantabria, 39012 Santander, SPAIN. E-mail: {\tt francisco.santos@unican.es}. Supported by grant MTM2017-83750-P of the Spanish Ministry of Science and grant EVF-2015-230 of the Einstein Foundation Berlin} 
\and
Oriol Serra
\thanks{Department of Mathematics, Universitat Polit\`ecnica de Catalunya, and Barcelona Graduate School of Mathematics, Barcelona, Spain. E-mail:  {\tt oriol.serra@upc.edu}. supported by grants MTM2017-82166-P and MDM-2014-0445 of the Spanish Ministry of Science
}}
\title{Triangulations and a discrete Brunn-Minkowski inequality in the plane}
\date{}
\begin{document}

\maketitle

\section{Introduction}

In this paper we write  $A,B$ to denote finite
subsets of $\R^d$,
and $|\cdot|$ stands for their cardinality. We say that $A\subset \R^d$ is $d$--{\it dimensional}  if it is not contained in any affine hyperplane of $\R^d$. Equivalently, the real affine span of $A$ is $\R^d$. 
For objects $X_1,\ldots,X_k$ in $\R^2$,
$[X_1,\ldots,X_k]$ denotes their convex hull. The \emph{lattice generated by $A$} is the  additive subgroup $\Lambda = \Lambda(A)\subset \R^d$ generated by $A-A$, and $A$ is called \emph{saturated} if it satisfies $A=[A]\cap \Lambda (A)$. 

Our starting point are two classical results.
The first one is from the 1950's, due to Kemperman \cite{Kem56}, and popularized by
Freiman \cite{Fre73}:  if $A$ and $B$ are finite nonempty subsets of $\R$,   then
$$
|A+ B|\geq |A|+|B|-1,
$$
with equality if and only
if $A$ and $B$ are arithmetic progressions of the same difference.
The other result,
the Brunn-Minkowski inequality, dates back to the 19th century.
It says that if $X,Y\subset \R^d$ are compact nonempty
sets then
$$
\lambda(X+Y)^{\frac1d}\geq \lambda(X)^{\frac1d}+\lambda(Y)^{\frac1d}
$$
where $\lambda$ stands for the Lebesgue measure. Moreover, provided that $\lambda (X)\lambda(Y)>0$, 
 equality holds if and only if $X$ and $Y$ are convex
homothetic sets.

Various discrete analogues
of the Brunn-Minkowski inequality have been
established in 
Bollob\'as, Leader \cite{BoL91}, Gardner, Gronchi \cite{GaG01},  Green, Tao \cite{GrT06}, Gonz\'alez-Merino, Henze \cite{MeH}, Hern\'andez, Iglesias and Yepes \cite{HeIgYe17},  
Huicochea \cite{Hul18} in any dimension, and  Grynkiewicz, Serra \cite{GrS10} in the planar case.
Most of these  papers use the method
of compression, which changes a
finite set into a set better suited for
sumset estimates, but does not control the convex hull. 

Unfortunately the known analogues are not
as simple in their form as the original Brunn--Minkowski
inequality. For instance,  a formula due to Gardner and Gronchi \cite{GaG01} says that,
 if $A$ is $d$--dimensional, then
\begin{equation}
\label{GraGro}
|A+B|\geq (d!)^{-\frac1d}(|A|-d)^{\frac1d}+|B|^{\frac1d}.
\end{equation}
Concerning the case $A=B$,  Freiman \cite{Fre73} proved that if the dimension of $A$
 is $d$, then
\begin{equation}
\label{Freimanmulti}
|A+A|\ge (d+1)|A|-{d+1 \choose 2}.
\end{equation}
Both estimates are optimal.  In particular, we can not  expect a true discrete analogue of the Brunn--Minkowski  inequality if the notion of volume is replaced by cardinality.

We here conjecture and discuss  a more direct version
of the Brunn--Minkowski inequality where the notion of volume is replaced by the number of full dimensional simplices in a triangulation of the convex hull of the finite set.

For any finite $d$--dimensional set $A\subset \R^d$ we write $T_A$ to denote some triangulation of $A$, by which we mean a triangulation of 
$[A]$ using $A$ as the set of vertices. We denote $|T_A|$ the number of $d$-dimensional simplices in $T_A$. 

In dimension two  the number $|T_A|$ is the same for all triangulations of $A$, so we denote it ${\rm tr}(A)$. More precisely, if  $\Delta_A$ and $\Omega_A$ denote
the number of points of $A$ in the boundary $\partial [A]$
and in the interior ${\rm int}[A]$, respectively, then 
\begin{equation}
\label{Eulerpoints}
{\rm tr}(A)=\Delta_A+2\Omega_A-2=2|A|-\Delta_A-2.
\end{equation}
Therefore in dimension two we can formulate the following  discrete analogue  of the Brunn--Minkowski inequality.

\begin{conj}
\label{ruzsabrunnconj}
If finite $A,B\subset\R^2$ in the plane are not collinear, then
$$
{\rm tr}(A+B)^{\frac12}\geq {\rm tr}(A)^{\frac12}+{\rm tr}(B)^{\frac12}.
$$
\end{conj}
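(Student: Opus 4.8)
\section{Proof strategy for Conjecture~\ref{ruzsabrunnconj}}

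The plan is to derive the inequality from the classical Brunn--Minkowski inequality for the convex hulls $[A]$ and $[B]$ -- equivalently, from Minkowski's mixed--area inequality in the plane -- by using a mixed subdivision of $[A+B]=[A]+[B]$ to transport the volume estimate to the combinatorial quantity ${\rm tr}$. After an affine transformation (which preserves ${\rm tr}$ and all the lattices involved) we may assume $\Lambda(A+B)=\Lambda(A)+\Lambda(B)=\Z^2$, so that $\Lambda(A)$ and $\Lambda(B)$ are full--rank sublattices of $\Z^2$, and $A,B$ are (translates of) subsets of $\Z^2$.

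Fix triangulations $T_A$ of $[A]$ and $T_B$ of $[B]$; by the two--dimensional invariance noted above they have ${\rm tr}(A)$ and ${\rm tr}(B)$ triangles. By the Cayley trick, a triangulation of the Cayley polytope $\mathcal{C}=[\,(A\times\{0\})\cup(B\times\{1\})\,]\subset\R^3$ refining $T_A$ and $T_B$ induces a \emph{fine mixed subdivision} $S$ of $[A]+[B]$ whose maximal cells are of three kinds: translates $\sigma+v$ of a triangle $\sigma\in T_A$ by a vertex $v$ of $B$ (``pure--$A$''), translates $u+\tau$ with $u\in A$, $\tau\in T_B$ (``pure--$B$''), and parallelograms $[a_1,a_2]+[b_1,b_2]$ spanned by an edge of $T_A$ and a non--parallel edge of $T_B$ (``mixed''). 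All vertices of all these cells lie in $A+B$, so cutting each mixed parallelogram along a diagonal yields a triangulation of $[A]+[B]$ with vertex set contained in $A+B$; since inserting the remaining points of $A+B$ never decreases the number of triangles, one gets
$$
{\rm tr}(A+B)\ \ge\ n_A+n_B+2m ,
$$
where $n_A,n_B,m$ count the pure--$A$, pure--$B$ and mixed cells of $S$.

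Now compare areas. Every triangle of $T_A$ has area at least $d(\Lambda(A))/2$ (half the covolume of $\Lambda(A)$), so ${\rm tr}(A)\le 2\,{\rm area}[A]/d(\Lambda(A))$, and likewise for $B$; the pure--$A$ cells of $S$ are translates of triangles of $T_A$ with total area ${\rm area}[A]$, the pure--$B$ cells total ${\rm area}[B]$, and the mixed cells total $2V([A],[B])$, twice the mixed area, while Minkowski's inequality in the plane gives $V([A],[B])^2\ge {\rm area}[A]\cdot{\rm area}[B]$. Consequently, if the triangulations and the subdivision can be chosen so that $n_A={\rm tr}(A)$, $n_B={\rm tr}(B)$, and every mixed parallelogram has area at most $\sqrt{d(\Lambda(A))\,d(\Lambda(B))}$, then
$$
m\ \ge\ \frac{2V([A],[B])}{\sqrt{d(\Lambda(A))\,d(\Lambda(B))}}\ \ge\ \frac{2\sqrt{{\rm area}[A]\cdot{\rm area}[B]}}{\sqrt{d(\Lambda(A))\,d(\Lambda(B))}}\ \ge\ \sqrt{{\rm tr}(A)\,{\rm tr}(B)},
$$
whence ${\rm tr}(A+B)\ge {\rm tr}(A)+{\rm tr}(B)+2\sqrt{{\rm tr}(A){\rm tr}(B)}=({\rm tr}(A)^{1/2}+{\rm tr}(B)^{1/2})^2$, as desired; the equality case should then trace back to the equality case of Minkowski's inequality ($[A]$ and $[B]$ homothetic) together with tightness of the cell counts.

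The real difficulty is hidden in the phrase ``can be chosen''. When $A$ and $B$ are \emph{saturated}, one may take $T_A,T_B$ unimodular with respect to $\Lambda(A),\Lambda(B)$: then every pure--$A$ cell has area exactly $d(\Lambda(A))/2$, so $n_A={\rm tr}(A)$ holds automatically (and similarly $n_B={\rm tr}(B)$), all edges of the triangulations are primitive, and what remains is the -- still nontrivial -- task of controlling the areas of the mixed parallelograms; I expect this to be arrangeable, yielding the conjecture in the saturated case. In general, however, a triangulation of $[A]$ with vertex set exactly $A$ is forced to use ``long'' edges carrying lattice points of $\Lambda(A)$ not belonging to $A$; such edges spawn oversized mixed cells, and even the identity $n_A={\rm tr}(A)$ can fail. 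Overcoming this seems to require either a mixed subdivision adapted to the non--saturated structure, or an induction that peels boundary points off $A$ and $B$ so as to reduce to the saturated case, or bypassing areas altogether by combining the Grynkiewicz--Serra planar sumset bounds with a direct estimate of $\Delta_{A+B}$ in terms of the one--dimensional sumsets carried by the edges of $[A]$ and $[B]$. I would expect the saturated case, and the case where one of $A,B$ is a triangle (so that ${\rm tr}=1$), to be the accessible ones, with the full conjecture remaining the genuine obstacle.
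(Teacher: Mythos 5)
The statement you are asked to prove is labelled a \emph{conjecture} in the paper; the authors explicitly state that they could not prove it in general and only establish the special cases $[A]=[B]$, $B=A\cup\{b\}$, $\min(|A|,|B|)=3$, and both sets in convex position. Your reduction, however, is precisely theirs: passing through the Cayley trick to a fine mixed subdivision $M$ of $[A]+[B]$ relative to $T_A,T_B$ gives $\|M\|=n_A+n_B+2m$ with $n_A=|T_A|$ and $n_B=|T_B|$ automatic from the uniqueness clause in the definition of a mixed subdivision (so that worry in your sketch is moot), and hence ${\rm tr}(A+B)\geq{\rm tr}(A)+{\rm tr}(B)+2|M_{11}|$; what one must show is $|M_{11}|\geq\sqrt{|T_A|\,|T_B|}$, which is exactly the paper's Conjecture~\ref{ruzsabrunnconj-mixed}.

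The genuine gap is in how you propose to lower-bound $m=|M_{11}|$. You want to choose $M$ so that every mixed parallelogram has area at most $\sqrt{d(\Lambda(A))\,d(\Lambda(B))}$ and then invoke Minkowski's inequality $V([A],[B])^2\ge{\rm area}[A]\cdot{\rm area}[B]$. That area bound is not ``arrangeable'', not even for saturated sets. Take $A=\{(0,0),(1,0),(N,1),(N+1,1)\}$ and $B=\{(0,0),(0,1),(-1,N),(-1,N+1)\}$, both saturated unimodular parallelograms with ${\rm tr}(A)={\rm tr}(B)=2$ and $d(\Lambda(A))=d(\Lambda(B))=1$. Here $2V([A],[B])=N^2+2N+2$, while any fine mixed subdivision has at most $5\cdot 5$ mixed cells (one per pair of edges of $T_A,T_B$), so some mixed parallelogram has area of order $N^2$; no choice of $T_A$, $T_B$, or $M$ keeps the mixed-cell areas near $1$. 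Thus mixed areas give no uniform lower bound on the \emph{number} of mixed cells, and the Brunn--Minkowski/Minkowski route collapses. The paper's partial proofs of Conjecture~\ref{ruzsabrunnconj-mixed} avoid areas entirely and instead produce many parallelograms by explicit combinatorial constructions --- transversal ``horizontal--vertical paths'' and ``proper stars'' when $|B|=3$ (Section~\ref{secmixed-triang}), and boundary-chain parallelograms when $A,B$ are in convex position (Section~\ref{secconvex-position}) --- with the count controlled by averaging and AM--GM, not by volume; Proposition~\ref{counterexample} moreover shows that $T_A$ and $T_B$ cannot be fixed independently. Your reduction to Conjecture~\ref{ruzsabrunnconj-mixed} is right, but the proposed way to close it fails, and you correctly flag that a full proof remains open.
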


One case where Conjecture~\ref{ruzsabrunnconj} holds with equality is when  $A$ and $B$ are homothetic saturated sets with respect to the same lattice; namely, $A=\Lambda\cap k\cdot P$ and $B=\Lambda \cap m\cdot P$ for a lattice $\Lambda$, polygon $P$ and integers $k,m\ge 1$.  This follows from the original Brunn-Minkowski equality, since  $A+B=\Lambda\cap (k+m)\cdot P$ and the area of any triangle in a suitable triangulation is $\frac12\det\Lambda$. 

We also note that  Conjecture \ref{ruzsabrunnconj}, together with the equality  \eqref{Eulerpoints} and the fact that $\Delta_{A+B}\ge \Delta_A+\Delta_B$,
would imply the following inequality of Gardner and Gronchi  \cite[ Theorem 7.2]{GaG01} for  sets $A$ and $B$
saturated with respect to the same lattice:
$$
|A+B|\ge |A|+|B|+(2|A|-\Delta_A-2)^{1/2}(2|B|-\Delta_B-2)-1.
$$

Unfortunately we have not been able to prove  Conjecture~\ref{ruzsabrunnconj} in full generality. Our main results are the following four cases of it: if $[A]=[B]$ (Theorem~\ref{A=B}), in which case we also determine the conditions for equality in Conjecture~\ref{ruzsabrunnconj}; if $A$ and $B$ differ by one element
(Theorem~\ref{oneextra}); if either $|A|=3$ or $|B|=3$ (Theorem~\ref{triangle-mixed}); and 
if none of  $A$ and $B$ have interior points (Theorem~\ref{convex-position}). Actually, the last two theorems verify a
stronger conjecture (Conjecture~\ref{ruzsabrunnconj-mixed}) discussed below.

We start with the case $[A]=[B]$, which naturally include the case $A=B$.

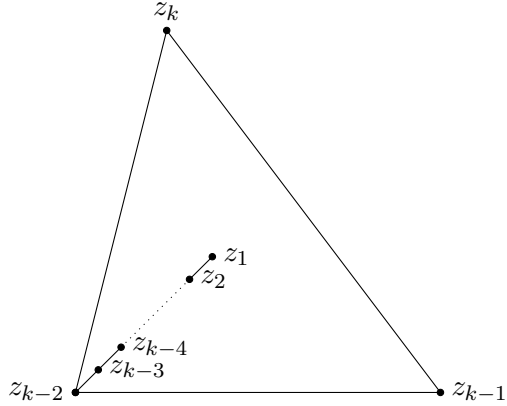
\begin{figure}
\begin{center}
\begin{tikzpicture}[scale=0.6]

\foreach \i in {(0,0), (2,8),(8,0)}
\draw[fill] \i circle(2pt);

\foreach \i in {0,...,2}
\draw[fill] (0.5*\i,0.5*\i) circle(2pt);

\foreach \i in {5,...,6}
\draw[fill] (0.5*\i,0.5*\i) circle(2pt);

\draw (0,0)--(2,8)--(8,0)--(0,0)--(1,1) (2.5,2.5)--(3,3);

\draw[dotted] (1,1)--(2.5,2.5);

\node[below,left] at (0,0) {\small $z_{k-2}$};

\node[below,right] at (0.5,0.5) {\small $z_{k-3}$};
\node[below,right] at (1,1) {\small $z_{k-4}$};
\node[below,right] at (2.5,2.5) {\small $z_{2}$};
\node[below,right] at (3,3) {\small $z_{1}$};
\node[below,right] at (8,0) {\small $z_{k-1}$};
\node[above] at (2,8) {\small $z_{k}$};

\end{tikzpicture}
\end{center}
\caption{ An illustration of case (b) in Theorem \ref{A=B}. }
\end{figure}

\begin{theo}
\label{A=B}
Let  $A,B\subset\R^2$ be finite two dimensional sets. If  $[A]=[B]$ then
Conjecture~\ref{ruzsabrunnconj} holds.
Moreover equality holds
if and only if $A=B$, and
\begin{description}
  \item{(a)} either $A$ is a saturated set, or
  \item{(b)}  $A=\{z_1,\ldots,z_k\}$ for $k\geq 4$, where
  $z_1,\ldots,z_{k-3}\in{\rm int}[z_{k-2},z_{k-1},z_k]$,
  and $z_1,\ldots,z_{k-2}$ are collinear and equally spaced in this order (see Figure~1).
 \end{description}
\end{theo}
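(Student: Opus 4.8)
The plan is to replace the square‑root inequality by a linear one and then read $\mathrm{tr}(A+B)$ off a mixed subdivision of $2P$. Put $P:=[A]=[B]$, so that $[A+B]=P+P=2P$, and recall the identity $\mathrm{tr}(C)=2|C|-\Delta_C-2$ valid for every finite two‑dimensional $C$. Since $\sqrt{2s+2t}\ge\sqrt{s}+\sqrt{t}$ for $s,t\ge0$ (this is just $s+t\ge 2\sqrt{st}$, with equality iff $s=t$), Conjecture~\ref{ruzsabrunnconj} in the case $[A]=[B]$ follows as soon as we establish the linear inequality
$$
\mathrm{tr}(A+B)\ \ge\ 2\,\mathrm{tr}(A)+2\,\mathrm{tr}(B).
$$
Moreover, equality in Conjecture~\ref{ruzsabrunnconj} forces both $\mathrm{tr}(A)=\mathrm{tr}(B)$ and equality in this linear inequality — that is where the list of extremal sets will come from.

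To handle the linear inequality I would invoke the Cayley trick. Because $[A]=[B]=P$, the Cayley polytope $\mathcal{C}=\mathrm{conv}\big((A\times\{0\})\cup(B\times\{1\})\big)\subset\R^3$ is nothing but the prism $P\times[0,1]$, its marked vertex set being $A$ on the bottom facet and $B$ on the top one. A triangulation $\mathcal{T}$ of $\mathcal{C}$ on this vertex set restricts to triangulations $T_A$ of $[A]$ and $T_B$ of $[B]$ and induces a fine mixed subdivision of $2P$ whose cells are: $\mathrm{tr}(A)$ translates of the triangles of $T_A$ (one per triangle, since each triangle of $T_A$ lies in a unique tetrahedron of $\mathcal{T}$), $\mathrm{tr}(B)$ translates of the triangles of $T_B$, and some number $M$ of parallelograms $e+e'$ with $e$ an edge of $T_A$ and $e'$ an edge of $T_B$. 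Refining this subdivision to a triangulation of $2P$ on the whole vertex set $A+B$, each triangular cell yields at least one simplex and each parallelogram at least two, so $\mathrm{tr}(A+B)\ge\mathrm{tr}(A)+\mathrm{tr}(B)+2M$. Hence it is enough to exhibit a triangulation $\mathcal{T}$ with $2M\ge\mathrm{tr}(A)+\mathrm{tr}(B)$, i.e.\ with at least as many mixed cells as the average of the numbers of pure cells. One structural fact that helps is that in any such $\mathcal{T}$ a tetrahedron with three vertices in $A$ is never face‑adjacent to one with three vertices in $B$ (the shared triangle would have to have two of its vertices in $A$ seen from the first tetrahedron and only one seen from the second), so the mixed tetrahedra genuinely separate the two pure pieces of $\mathcal{T}$.

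Proving that there is a triangulation with $2M\ge\mathrm{tr}(A)+\mathrm{tr}(B)$ is the crux, and the step I expect to be the main obstacle: pure face‑counting (the Euler relation for $\mathcal{T}$, with faces sorted by how many vertices lie in $A$) only reduces it to a lower bound on the number of edges of $\mathcal{T}$ joining $A$ to $B$ through the interior of the prism, and a single interior vertex of $T_A$ need only be joined to one vertex of $B$. I would obtain the estimate either by a sweeping argument — follow the polygonal slice of $\mathcal{T}$ at height $t$ as $t$ runs from $0$ (where it equals $T_A$) to $1$ (where it equals $T_B$); the triangle produced by a pure $A$‑tetrahedron shrinks to a point somewhere along the way and must be charged to parallelogram cells that open up along that path, using that $B$ is two‑dimensional so the slice cannot consist of shrinking triangles alone — or by induction on $|A|+|B|$: whenever $A$ is not just the vertex set of $P$, delete from $A$ an extreme point $p$ of the set of non‑vertices of $P$ in a generic direction, apply the inductive hypothesis to $A\smallsetminus\{p\}$, and check that reinserting $p$ increases $\mathrm{tr}(A+B)$ by at least $4$ when $p$ is interior (respectively at least $2$ when $p$ lies on an edge of $P$), which amounts to showing that $p+B$ has at least two points (respectively at least one interior point) that are not already in $(A\smallsetminus\{p\})+B$. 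The delicate point in the latter route is that this conclusion can fail for a careless choice of $p$, so one has to use the extremality of $p$ together with the fact that every vertex of $P$ belongs to $B$.

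It remains to discuss equality. Equality in $\mathrm{tr}(A+B)=\mathrm{tr}(A)+\mathrm{tr}(B)+2M$ means that $A+B$ meets the interior of no cell of the mixed subdivision and that bisecting the parallelograms creates no new points; equality in $2M=\mathrm{tr}(A)+\mathrm{tr}(B)$ is the rigid case of the core estimate; and $\mathrm{tr}(A)=\mathrm{tr}(B)$ holds as well. Combined with $[A]=[B]$, I expect these rigidity conditions to force first $A=B$ and then that $A$ is either a saturated set (case (a)) or the configuration (b), a triangle whose remaining points are equally spaced on a segment issuing from one of its vertices. Conversely, for (a) one uses that every lattice polygon in the plane has the integer decomposition property, whence $A+A=2[A]\cap\Lambda(A)$, and then $\mathrm{tr}(2A)=4\,\mathrm{tr}(A)$ follows from $\mathrm{tr}(C)=2|C|-\Delta_C-2$ together with Pick's formula; for (b), equality is a direct computation of $\mathrm{tr}(A)$ and $\mathrm{tr}(2A)$ from explicit coordinates, as in the examples. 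Checking that (a) and (b) are the only extremal configurations is the most laborious part of this last step.
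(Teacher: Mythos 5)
Your reduction via AM--GM to the linear inequality $\mathrm{tr}(A+B)\ge 2\,\mathrm{tr}(A)+2\,\mathrm{tr}(B)$ is correct and is exactly the paper's first step, and you also correctly observe that equality in the square-root inequality forces $\mathrm{tr}(A)=\mathrm{tr}(B)$ together with equality in the linear one. But the way you propose to prove the linear inequality has a genuine gap. Your Cayley-trick plan reduces it to showing that some triangulation of the prism $P\times[0,1]$ on vertex set $(A\times\{0\})\cup(B\times\{1\})$ has at least $\tfrac12\bigl(\mathrm{tr}(A)+\mathrm{tr}(B)\bigr)$ mixed tetrahedra, and you yourself call this the main obstacle. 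Neither of the two strategies you offer is carried through: the sweeping argument is only a metaphor, and the inductive one hinges on the unproved claim that a suitably chosen extreme non-vertex $p$ of $A$ yields two points of $p+B$ that are genuinely new in $(A\backslash\{p\})+B$ --- you flag that a careless $p$ fails, but do not show that an extremal one works. The equality analysis is likewise only a sketch. So as written the core of the proof is missing.

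For comparison, the paper proves the same linear inequality by a much shorter, purely local argument that never mentions mixed subdivisions. Since $[A]=[B]$, one triangulates this common polygon using $A\cap B$ as vertices; the linear inequality, written out via $\mathrm{tr}(X)=2|X|-\Delta_X-2$, is additive over the triangles $t$ of this triangulation, so it suffices to prove it with $A$ and $B$ replaced by $A_t=A\cap t$ and $B_t=B\cap t$. Inside each $t$ the set $A_t\cap B_t$ is exactly the three vertices $v_1,v_2,v_3$, and the essential geometric lemma is an elementary injectivity statement: if $p\in(A_t\cup B_t)\backslash\{v_1,v_2,v_3\}$ and $q\in A_t\cup B_t$, then $v_i+p=v_j+q$ forces $i=j$ and $p=q$, because the supporting line of $t$ at $v_j$ parallel to the opposite side separates $t$ from $v_j+t$ except at $v_j$. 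From this the triangle-wise count falls out directly, and the equality case is then handled by a separate lemma about the set of midpoints of a triangulation together with a short induction characterizing when that midpoint set equals $\tfrac12(A+A)$. I would suggest comparing with this local decomposition before investing further in the Cayley route, whose crux estimate may well be true but is not established by what you have written.
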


Let us mention that Theorem~\ref{A=B} (in fact, its particular case $A=B$) gives a simple proof of the following structure theorem of Freiman \cite{Fre73}
for a planar set with small doubling. We recall that according to (\ref{Freimanmulti}), if
finite $A\subset \R^2$ is two dimensional, then $|A+A|\geq 3|A|-3$ and,  if the dimension of $A$ is at least $3$, then $|A+A|\geq 4|A|-6$.

\begin{coro}[Freiman]
\label{A=Bstability}
Let $A\subset \R^2$ be a fnite two dimensional set and $\varepsilon\in (0,1)$. If $|A|\geq 48/\varepsilon^2$ and
$$
|A+A|\leq (4-\varepsilon)|A|,
$$
then there exists a line $l$ such that $A$ is covered by  at most 
$$
\frac2{\varepsilon}\cdot(1+\frac{32}{|A|\varepsilon^2})
$$
lines parallel to $l$.
\end{coro}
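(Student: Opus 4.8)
The plan is to use Theorem~\ref{A=B} (in the case $A=B$) in its quantitative, stability form: since ${\rm tr}(A+A)^{1/2} \geq 2\,{\rm tr}(A)^{1/2}$, with equality only in the rigid cases (a) and (b), a hypothesis $|A+A| \leq (4-\varepsilon)|A|$ — which forces ${\rm tr}(A+A)$ to be close to $4\,{\rm tr}(A)$ — should force $A$ to be close to a configuration of type (b), i.e., covered by few parallel lines. Concretely, first I would translate the cardinality hypotheses into statements about ${\rm tr}$. Using \eqref{Eulerpoints}, ${\rm tr}(A) = 2|A| - \Delta_A - 2$ and ${\rm tr}(A+A) = 2|A+A| - \Delta_{A+A} - 2$. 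Since $A+A \supseteq 2A$ (a translate-free homothet of $A$), we have $\Delta_{A+A} \geq \Delta_A$, and also $|A+A| \geq 2|A| - 1$ trivially; the interesting direction is the upper bound. From $|A+A| \leq (4-\varepsilon)|A|$ we get ${\rm tr}(A+A) \leq 2(4-\varepsilon)|A| - \Delta_A - 2 \leq 4\,{\rm tr}(A) + (2\Delta_A + 6) - 2\varepsilon|A|$, roughly, so ${\rm tr}(A+A)/{\rm tr}(A) \leq 4 - c\varepsilon$ for a suitable constant once $|A|$ is large compared to $1/\varepsilon$ and $\Delta_A$ is not too large; the case of large $\Delta_A$ (many boundary points) has to be handled separately but is easy, since many collinear boundary points already put most of $A$ on few lines.

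Next, the analytic core: I would establish a \emph{stability version} of Theorem~\ref{A=B}, showing that if ${\rm tr}(A+A)^{1/2} \leq (2+\delta)\,{\rm tr}(A)^{1/2}$ then $A$ is within a bounded number of parallel lines, with the number of lines controlled by $1/\delta$. The mechanism is the one already visible in the proof of Theorem~\ref{A=B}: if $A$ has $\ell$ interior points lying in "general" position (not all on one line with the prescribed spacing), then each such point, together with the boundary structure, forces extra triangles in $A+A$ beyond the $4\,{\rm tr}(A)$ baseline. Quantifying this — each unit of "non-degeneracy" of $A$ costs a definite additive amount in ${\rm tr}(A+A)$ — gives that the number of interior points of $A$ not lying on a single special line is $O(1/\delta)$, hence $A$ is covered by $O(1/\delta)$ lines parallel to that special line. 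Setting $\delta \asymp \varepsilon$ and tracking the constants (this is where the explicit $48$ and $32$ and $\frac{2}{\varepsilon}(1 + \frac{32}{|A|\varepsilon^2})$ come from) finishes the argument.

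I expect the main obstacle to be precisely this quantitative stability step: Theorem~\ref{A=B} as stated only gives the equality characterization, not a robust "near-equality implies near-extremal" statement, and extracting the latter requires re-examining the proof of Theorem~\ref{A=B} to see how much each deviation from cases (a)/(b) improves the inequality. One clean way to do this is to isolate a single interior point $z$ of $[A]$ not on the extremal line and show ${\rm tr}(A+A) \geq 4\,{\rm tr}(A) + (\text{number of such } z) - O(1)$ directly, by exhibiting that many points of the form $a + z$ with $a \in A$ are interior points of $[A+A]$ contributing to its triangulation count; summing over a maximal "independent" family of such interior points, and comparing with the cardinality bound, yields the line count. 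The bookkeeping between ${\rm tr}$, $|A|$, $\Delta_A$, $\Omega_A$ via \eqref{Eulerpoints}, together with the elementary observation that $k$ parallel lines carrying all of $A$ give $\Omega_A \leq $ (points on the $k-2$ inner lines), will then be routine but must be done carefully to land the stated constants.
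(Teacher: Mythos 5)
Your proposal starts in the right place — using Theorem~\ref{A=B} together with \eqref{Eulerpoints} and $\Delta_{A+A}\ge 2\Delta_A$ to translate $|A+A|\le(4-\varepsilon)|A|$ into a lower bound on $\Delta_A$ (this is exactly Lemma~\ref{|A+A|}: $|A+A|\ge 4|A|-\Delta_A-3$, hence $\Delta_A\ge\varepsilon|A|-3$). But from there you diverge sharply from what is actually needed, and the part you flag as ``the main obstacle'' is indeed a genuine gap: you never close it, and the route you sketch is not the one that works here.

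You propose to establish a quantitative stability version of Theorem~\ref{A=B} by re-examining its proof and showing that each interior point of $A$ off a ``special line'' forces ${\rm tr}(A+A)$ strictly above $4\,{\rm tr}(A)$. This is problematic on two counts. First, as you yourself note, Theorem~\ref{A=B} only characterizes equality; extracting a robust ``near-equality implies few lines'' statement from its proof is not routine, and you give no actual mechanism, only the hope that ``each unit of non-degeneracy costs a definite additive amount.'' Second, and more to the point, such a stability statement cannot by itself give a covering by $O(1/\varepsilon)$ lines: equality case (a) of Theorem~\ref{A=B} is a saturated set (e.g.\ a $k\times k$ grid), which is not covered by a bounded number of parallel lines. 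What saves the corollary is that the cardinality hypothesis, via $\Delta_A\ge\varepsilon|A|-3$, rules out such dense configurations outright — and once $\Delta_A$ is known to be large, the rest of the argument has nothing to do with triangulations or near-equality in Theorem~\ref{A=B}. You mention ``the case of large $\Delta_A$ has to be handled separately but is easy''; in fact large $\Delta_A$ is the \emph{only} case and it is where all the remaining work lives.

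The paper instead switches entirely to elementary sumset estimates. It shows (Lemma~\ref{|A+A|boundary}) that summing points from distinct sides of $[A]$ gives $|A+A|\ge\frac{\Delta_A^2}{4}-\frac{\Delta_A(m_A-1)}{2}$, where $m_A$ is the largest number of points of $A$ on a single side; combined with $\Delta_A\ge\varepsilon|A|-3$ and $|A+A|\le(4-\varepsilon)|A|$, this forces one side to be long, $m_A-1>\frac12\varepsilon|A|-\frac{12}{\varepsilon}$. It then invokes the classical Freiman slicing lemma (Lemma~\ref{scover}), which relates $|A+A|$, $m_A$, and the number $s$ of parallel lines via $|A+A|\ge 2|A|+(s-1)m_A-s$ and $|A+A|\ge(4-\tfrac{2}{s})|A|-(2s-1)$; the first of these bounds $s<\frac{8}{\varepsilon}$, and feeding that into the second gives the stated bound $s<\frac{2}{\varepsilon}\bigl(1+\frac{32}{\varepsilon^2|A|}\bigr)$. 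None of this appears in your sketch, and without something of this sort — a way to pass from ``many boundary points'' to ``one long side'' to ``few parallel lines'' — your argument does not reach the conclusion, let alone the explicit constants.
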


We note that, for $A$ the grid $\{1,\ldots,k\}\times\{1,\ldots,k^2\}$ and  large $k$, 
\begin{equation}
\label{square}
|A+A|\leq (4-\varepsilon)\,|A|,
\end{equation}
with $\varepsilon=\varepsilon_k=\frac{2}{k}$ and $A$ can not be covered by less than $k$ parallel lines.  Therefore
the constant $2$ in the numerator of $\frac2\varepsilon$ is asymptotically optimal in Corollary~\ref{A=Bstability}.

The next case w address is when $A$ and $B$ differ by one element.

\begin{theo}
\label{oneextra}
Let $A\subset \R^2$ be a finite two dimensional set. If $B=A\cup\{b\}$ for some $b\not\in A$ then 
Conjecture~\ref{ruzsabrunnconj} holds.
\end{theo}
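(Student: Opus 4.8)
The plan is to split into two cases according to whether $b$ lies in the convex hull of $A$. If $b\in[A]$ then $[B]=[A\cup\{b\}]=[A]$, so $[A]=[B]$ and the statement is exactly Theorem~\ref{A=B}. From now on assume $b\notin[A]$, so that $b$ is a vertex of $[B]$ and only the part of $\partial[A]$ visible from $b$ is affected when we pass to $B$. Let $w_1=p,w_2,\dots,w_m=q$ with $m\ge 2$ be the points of $A$ on that visible chain, listed in order, where $p,q$ are the two points at which the lines through $b$ support $[A]$. Going from $A$ to $B$ deletes this chain from the boundary and replaces it by the segments $[b,p]$ and $[b,q]$; hence each $w_i$ with $1<i<m$ that does not lie on $[b,p]\cup[b,q]$ becomes an interior point of $[B]$, while $b$ is added as a vertex. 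By \eqref{Eulerpoints} this gives ${\rm tr}(B)={\rm tr}(A)+\ell$, where $\ell\ge 1$ equals one plus the number of such points $w_i$.

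Next I would carry out a short algebraic reduction. Writing $t={\rm tr}(A)$, the desired inequality is ${\rm tr}(A+B)^{1/2}\ge t^{1/2}+(t+\ell)^{1/2}$. Squaring the right-hand side and using $\sqrt{t(t+\ell)}\le t+\ell/2$, it is enough to prove ${\rm tr}(A+B)\ge 4t+2\ell$. Applying Theorem~\ref{A=B} to the pair $(A,A)$ yields ${\rm tr}(A+A)\ge 4\,{\rm tr}(A)=4t$, so it suffices to establish the purely geometric estimate
\begin{equation}\label{eq:keyoneextra}
{\rm tr}(A+B)\ \ge\ {\rm tr}(A+A)+2\ell .
\end{equation}

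To prove \eqref{eq:keyoneextra} I would use that $A+B=(A+A)\cup(b+A)$ and $[A+B]=\bigl[2[A]\cup(b+[A])\bigr]$, so that a triangulation of $A+B$ can be obtained from an optimal triangulation of $A+A$ by adding a triangulation of $[A+B]\setminus{\rm int}\,2[A]$ (using as vertices the points of $b+A$ together with the points of $A+A$ on the shared portion of the boundary) and subdividing at each point of $b+A$ lying in ${\rm int}\,2[A]$ that is not yet a vertex; one then has to show at least $2\ell$ new simplices appear. The engine is a translated version of the local picture above: for any $w_i$ with $1<i<m$ and any direction $u$ in the interior of the normal cone of $[A]$ at $w_i$ one has $\langle b,u\rangle>h_{[A]}(u)=\langle w_i,u\rangle$, whence $\langle b+w_i,u\rangle=\langle b,u\rangle+\langle w_i,u\rangle>2h_{[A]}(u)=h_{2[A]}(u)$, so $b+w_i\notin 2[A]$. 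Thus $b+w_1,\dots,b+w_m$ form a convex chain resting on or outside $\partial(2[A])$, and triangulating the crescent bounded by this chain and the arc of $\partial(2[A])$ it cuts off, together with the part of $b+[A]$ protruding beyond $2[A]$, should yield the claimed new triangles once the points of $b+A$ falling inside $2[A]$ are accounted for.

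The step I expect to be the real obstacle is the exact count in \eqref{eq:keyoneextra}. While ${\rm tr}$ is monotone under adding points — so ${\rm tr}(A+B)\ge{\rm tr}(A+A)$ is free — obtaining the factor $2$ in front of $\ell$ requires a precise description of which points of the whole translate $b+A$ (not merely of the chain $b+\{w_1,\dots,w_m\}$) lie outside or on $\partial(2[A])$, and of the bridging region $[A+B]\setminus\bigl(2[A]\cup(b+[A])\bigr)$ when $2[A]$ and $b+[A]$ are far apart. In addition, several degenerate positions of $b$ must be treated separately — a support line through $b$ containing an edge of $[A]$, points of $A$ collinear with $b$ and a support point, or points of $b+A$ landing on $\partial(2[A])$ or on edges of the chosen triangulation — since ${\rm tr}$ is not stable under perturbation and the generic bound does not transfer automatically; in each such case, however, the degeneracy decreases $\ell$ or splits fewer simplices off the translate, so the estimate should reduce to, or be weaker than, the generic one.
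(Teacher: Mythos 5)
Your reduction to proving ${\rm tr}(A+B)\geq 4\,{\rm tr}(A)+2\ell$ (in your notation, $4t+2\ell$) is exactly the paper's reduction, and your observation that $b\in[A]$ forces $[A]=[B]$, so that Theorem~\ref{A=B} applies directly, is a perfectly valid shortcut for that case. However, for $b\notin[A]$ you then pass to the intermediate inequality
\[
{\rm tr}(A+B)\ \ge\ {\rm tr}(A+A)+2\ell,
\]
which you yourself flag as ``the real obstacle,'' and the sketch you give does not establish it. This is a genuine gap, not a routine detail: the plan of triangulating the crescent $[A+B]\setminus{\rm int}(2[A])$ requires controlling exactly which points of $b+A$ land inside, on, or outside $\partial(2[A])$, how many boundary points of $2[A]$ become interior to $[A+B]$, and what happens in the ``bridge'' region when $2[A]$ and $b+[A]$ are disjoint; none of this is carried out, and it is not obviously simpler than the original inequality. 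Moreover, the quantity ${\rm tr}(A+A)$ is itself only bounded below by $4\,{\rm tr}(A)$ (with equality in the rigid cases of Theorem~\ref{A=B}), so you are trying to prove something potentially stronger than needed, which may make the crescent count harder, not easier.

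The paper's proof sidesteps \eqref{eq:keyoneextra} entirely. It fixes a triangulation $T$ of $[A]$, works with the midpoint set $A_T$ (for which ${\rm tr}(A_T)=4\,{\rm tr}(A)$ exactly), and then adjoins the half-sums $\frac12(V+\{b\})$, where $V$ is the set of points of $A$ visible from $b$. Because these half-sums lie strictly outside $[A]$, they are disjoint from $A_T$, so each genuinely contributes; a direct count of how many of these half-sums and how many boundary points of $A_T$ fall in ${\rm int}[B]$ (weight $2$) versus $\partial[B]$ (weight $1$), via the paper's inequality \eqref{extrabasic}, yields ${\rm tr}(A+B)\geq 4\,{\rm tr}(A)+3\ell$, which is more than required. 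If you want to pursue your route, you would need to supply that crescent-count argument in full, including the disconnected and degenerate configurations you list at the end; alternatively, replacing ${\rm tr}(A+A)$ by ${\rm tr}(A_T)$ for a fixed triangulation $T$ and tracking visible points, as the paper does, localizes the count and makes it tractable.
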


For our next results we need the notion of \emph{mixed subdivision} 
(see De Loera, Rambau, Santos \cite{LRF10} for details). 
For finite $d$--dimensional sets $A,B\subset\R^d$  and  triangulations $T_A$ and $T_B$  of $[A]$ and $[B]$, we call a polytopal subdivision $M$ of $[A+B]$ a {\it mixed subdivision}  corresponding to $T_A$ and $T_B$ if
\begin{description}
\item{(i)} every $k$-cell of $M$ is of the form $F+G$ where $F$ is an $i$-simplex of $T_A$ and $G$ is a $j$-simplex of $T_B$ with $i+j=k$;
\item{(i)} for any $d$-simplices $F$ of $T_A$ and $G$ of $T_B$, there is a unique $b\in B$ and a unique $a\in A$ such that $F+b\in M$ and $a+G\in M$.
\end{description}
We write $\|M\|$ to denote the weighted number of $d$-polytopes, where $F+G$ has weight ${i+j \choose i}$ if $F$ is an $i$-simplex of $T_A$, and $G$ is a $j$-simplex of $T_B$ with $i+j=d$. 
In particular, all vertices of $M$ are in $A+B$, and the number of $d$-simplices is $\|M\|$ for any triangulation of $M$ with the same set of vertices   (see e.g.  \cite[Proposition 6.2.11]{LRF10}). 

The main goal of this paper is to  investigate the following problem: For which triangulations $T_A$ and $T_B$ there exists a corresponding mixed subdivision $M$ for $[A+B]$ such that
\begin{equation}
\label{MixedBrunnMinkowski}
\|M\|^{\frac1d}\geq |T_A|^{\frac1d}+|T_B|^{\frac1d}.
\end{equation}

In $\R^2$, we write $M_{11}$ to denote the set of parallelograms  in a mixed subdivision $M$. In this case (\ref{MixedBrunnMinkowski}) is  equivalent to the following stronger version of Conjecture~\ref{ruzsabrunnconj}. 

\begin{conj}
\label{ruzsabrunnconj-mixed}
For every finite two dimensional sets  $A,B\subset\R^2$ there exist triangulations $T_A$ and $T_B$ of $[A]$ and $[B]$ using $A$ and $B$, respectively, as the set of vertices, and a corresponding mixed subdivision $M$ of $[A+B]$ such that
\begin{equation}
\label{MixedBrunnMinkowski2}
|M_{11}|\geq \sqrt{|T_A|\cdot |T_B|}.
\end{equation}
\end{conj}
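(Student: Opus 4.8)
The plan is to first observe that Conjecture~\ref{ruzsabrunnconj-mixed} is indeed stronger than Conjecture~\ref{ruzsabrunnconj}: by the weighting convention, $\|M\|=|M_{11}|\cdot 2 + |M_{20}|+|M_{02}|$ where $M_{20}$ and $M_{02}$ count the translates of triangles of $T_A$ and $T_B$ respectively. Since $|M_{20}|=|T_A|$ and $|M_{02}|=|T_B|$ by condition (ii) of the mixed subdivision, we get $\|M\|=2|M_{11}|+|T_A|+|T_B|$, and $\|M\|$ is the number of triangles in any triangulation of $[A+B]$ refining $M$, so $\|M\|\ge {\rm tr}(A+B)$ when the vertex set of that triangulation is contained in $A+B$; combined with \eqref{MixedBrunnMinkowski2} this yields ${\rm tr}(A+B)\ge 2\sqrt{|T_A||T_B|}+|T_A|+|T_B|=(\sqrt{|T_A|}+\sqrt{|T_B|})^2\ge({\rm tr}(A)^{1/2}+{\rm tr}(B)^{1/2})^2$ once we pick $T_A,T_B$ with $|T_A|={\rm tr}(A)$, $|T_B|={\rm tr}(B)$ (in the plane all triangulations have this common value by \eqref{Eulerpoints}). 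So the real content is producing the triangulations and the mixed subdivision.

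Next I would set up the combinatorial machinery of the Cayley trick: mixed subdivisions of $[A+B]$ correspond to polytopal subdivisions of the Cayley polytope ${\rm Cay}(A,B)\subset\R^3$, and coherent (regular) mixed subdivisions come from a single height function. The natural approach is to choose a generic height function on $A\sqcup B$, producing a regular triangulation $T$ of the Cayley polytope, which restricts to regular triangulations $T_A$, $T_B$ on the two fibers and whose image under the projection is a regular fine mixed subdivision $M$. Then $|M_{11}|$ counts the "mixed" cells, i.e. those of the form $e_A+e_B$ with $e_A$ an edge of $T_A$ and $e_B$ an edge of $T_B$. The combinatorial identity to exploit is that in the Cayley triangulation, each tetrahedron uses either $3$ points from $A$ and $1$ from $B$ (projecting to a translate of a triangle of $T_A$, contributing to $M_{20}$), or $1$ from $A$ and $3$ from $B$ (contributing to $M_{02}$), or $2$ and $2$ (projecting to a parallelogram, contributing two tetrahedra per parallelogram, so contributing to $M_{11}$). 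Thus $\|M\|=2|M_{11}|+|T_A|+|T_B|$ is simply the tetrahedron count of $T$, and \eqref{MixedBrunnMinkowski2} becomes a statement purely about how balanced this count is: we want the number of $(2,2)$-tetrahedra to be at least $\sqrt{\text{(number of $(3,1)$)}\cdot\text{(number of $(1,3)$)}}$.

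The heart of the matter, then, is a clever choice of heights so that the mixed cells are numerous enough. A promising strategy is a "staircase" or "lex" construction: order the points and lift them so that the mixed subdivision $M$ is the Cayley trick image of a pulling (or placing) triangulation; for such triangulations one can hope to track the cell types inductively as points are inserted one at a time, reducing to Theorem~\ref{oneextra} and Theorem~\ref{A=B} as base cases (one extra point; equal convex hulls). Alternatively, one can try the subdivision induced by a generic linear functional $\ell$: sort $A$ and $B$ by $\ell$-value, and build $M$ from the "zonotopal-like" pattern of the Minkowski sum, in which the parallelograms correspond to pairs of edges, one from each summand, that are $\ell$-comparable in a controlled way. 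I expect the main obstacle to be exactly this: showing that \emph{some} coherent choice makes $|M_{11}|$ large enough in general — the AM–GM-type lower bound $|M_{11}|\ge\sqrt{|T_A||T_B|}$ is tight only in very structured (homothetic lattice) cases, so the construction must be robust enough to nearly achieve it for arbitrary point configurations, and controlling the parallelogram count when the two convex hulls have very different shapes (e.g. a long thin $[A]$ versus a fat $[B]$) is the delicate point. The four theorems listed in the introduction are presumably the cases where this control can be made rigorous: $[A]=[B]$, one extra point, a triangle summand, and convex position; in the latter two the full Conjecture~\ref{ruzsabrunnconj-mixed} is claimed, which suggests that when there are no interior points the parallelogram count can be computed exactly via the edge-pairing description.
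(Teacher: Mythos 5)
This statement is labeled a \emph{conjecture} in the paper, and the paper does not prove it; the authors explicitly say they could not establish it in general and only verify it in two special cases, when $|B|=3$ (Theorem~\ref{triangle-mixed}) and when $A$ and $B$ are in convex position (Theorem~\ref{convex-position}). So there is no ``paper's own proof'' to compare your attempt against. Your preliminary reduction is correct and matches the paper's framing: $\|M\|=2|M_{11}|+|T_A|+|T_B|$ (since the weight of a parallelogram is $\binom{2}{1}=2$ while a translated triangle has weight $1$, and condition (ii) of the definition forces exactly one translate of each triangle of $T_A$ and of $T_B$), and since $\|M\|\le {\rm tr}(A+B)$ and $|T_A|={\rm tr}(A)$, $|T_B|={\rm tr}(B)$ by \eqref{Eulerpoints}, inequality \eqref{MixedBrunnMinkowski2} does imply Conjecture~\ref{ruzsabrunnconj}. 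The Cayley-trick reformulation you sketch (tetrahedra of type $(3,1)$, $(1,3)$, $(2,2)$) is also the standard and correct way to encode fine mixed subdivisions.

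But you are right to flag that you have no construction, and that is exactly where the paper stops as well. One additional warning relevant to your ``coherent lift'' / ``pulling'' plan: the paper's Proposition~\ref{counterexample} shows that the choice of triangulations is genuinely constrained --- there is an explicit $(A,B)$ and a triangulation $T_B$ for which \emph{no} mixed subdivision compatible with \emph{any} $T_A$ satisfies \eqref{MixedBrunnMinkowski}. Since pulling/placing triangulations and generic-height coherent lifts commit you to particular $T_A,T_B$ without reference to the inequality you want, a naive lifting scheme is not robust against such examples; the construction would have to adapt to the interaction between $A$ and $B$. When the paper does prove special cases it proceeds not via lifts but via explicit planar combinatorics: ``proper stars'' of transversal paths emanating from a well-chosen center point (Section~\ref{secmixed-triang}, for $|B|=3$) and arc-pairing along the boundary together with the elementary bound $ts\ge t+s-1$ (Section~\ref{secconvex-position}, for convex position). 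So your high-level plan is a sensible direction, but the delicate middle step you identify --- making $|M_{11}|$ provably large by choice of heights --- is precisely the open part, and it remains open in the paper.
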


Conjecture~\ref{ruzsabrunnconj-mixed} offers a geometric and algorithmic approach to prove Conjecture~\ref{ruzsabrunnconj}.

The following example shows that one cannot a priori fix the triangulations $T_A$ and $T_B$ in Conjecture~\ref{ruzsabrunnconj-mixed}:
%
%

\begin{figure}
\label{AA}
\begin{center}
\begin{tikzpicture}[scale=0.5]

\draw[lightgray] (-1,-2) grid (2,1);

\foreach \i in {(0,0), (2,1),(-1,-2)}
\draw[fill] \i circle(2pt);

\draw (0,0)--(2,1)--(-1,-2)--(0,0);
\end{tikzpicture}
\hspace{5mm}
\begin{tikzpicture}[scale=0.5]


\foreach \i in {-5,...,3}
{
	\draw[lightgray]  (\i+1,\i)--(\i+1,\i+1)--(\i+2,\i+1);
	\draw[lightgray] (\i+1,\i+1)--(-5,5)--(\i+1,\i);
	\draw[lightgray] (\i+1,\i+1)--(5,-5)--(\i+1,\i);
\draw[fill] (\i+1,\i) circle(2pt);
\draw[fill] (\i+1,\i+1) circle(2pt);

}
	\draw[lightgray] (5,-5)--(5,4)--(-5,5);
\draw[fill] (-5,5) circle(2pt);
\draw[fill] (5,-5) circle(2pt);
\draw[fill] (5,4) circle (2pt);

\end{tikzpicture}
\end{center}
\caption{ An illustration of the example described in Proposition \ref{counterexample}. }
\end{figure}
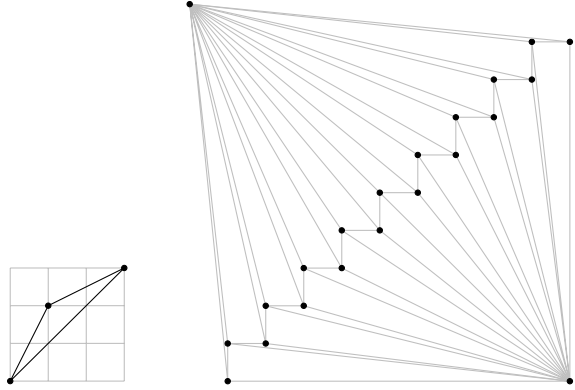

\begin{prop}
\label{counterexample}
Let
$$
A=\{(0,0),(-1,-2),(2,1)\}.
$$
For $k\geq 145$, let
$$
B=\{p,q,l_0,\ldots,l_k,r_0,\ldots,r_{k-1}\},
$$
where 
$p=(-1,k+1)$, $q=(k+1,-1)$, 
$l_i=(i,i)$ for $i=0,\ldots,k$ and $r_i=(i,i+1)$ for $i=0,\ldots,k-1$. 

Let  $T_B$ be the triangulation of $B$ consisting of the triangles 
$$
[p,l_i,r_i],[q,l_i,r_i], \, i=0,\ldots , k-1\;\text{and }\; [p,l_i,r_{i-1}], [q,l_i,r_{i-1}], \; i=1,\ldots, k.
$$ 
Then, no mixed subdivision of $A+B$ corresponding to $T_B$ and any triangulation $T_A$ of $A$ satisfies \eqref{MixedBrunnMinkowski} for $d=2$. 
\end{prop}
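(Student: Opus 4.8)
The plan is to compute all relevant quantities explicitly and derive a contradiction with \eqref{MixedBrunnMinkowski}. First I would record the basic invariants. The set $A$ is a triangle with no interior points, so ${\rm tr}(A)=|T_A|=1$; its only triangulation is the full triangle itself. For $B$, I would verify that the given $T_B$ is indeed a triangulation of $[B]$: the points $l_0,\dots,l_k$ lie on the diagonal line $y=x$ and $r_0,\dots,r_{k-1}$ lie on the parallel line $y=x+1$, forming a thin ``ladder'' strip whose triangles are the $[l_i,r_i,l_{i+1}]$ and $[r_i,l_{i+1},r_{i+1}]$-type cells, and then $p$ (far above) and $q$ (far below) are joined to each edge of this ladder on the appropriate side. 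Counting: the ladder contributes $2k$ small triangles between consecutive rungs, and coning $p$ and $q$ over the top and bottom boundary edges of the strip gives another $2k$ triangles each side, so $|T_B|=2k\cdot 2 = 4k$ — I would pin down the exact count from the list of triangles given ($2k$ from the first family, $2k$ from the second), i.e. $|T_B|=4k$. Then $|T_B|^{1/2}=2\sqrt{k}$, while $|T_A|^{1/2}=1$, so \eqref{MixedBrunnMinkowski} demands $\|M\|^{1/2}\ge 2\sqrt{k}+1$, i.e. $\|M\|\ge 4k+4\sqrt{k}+1$; since $\|M\|=|M_{11}|+|T_B|$ always holds when $T_A$ is a single triangle (because each mixed cell is either $a+G$ for the unique vertex $a$, contributing the $4k$ cells of weight $1$, or a parallelogram $e+f$ with $e$ an edge of the triangle $A$ and $f$ an edge of $T_B$, each of weight $2$), this is equivalent to $|M_{11}|\ge 4\sqrt{k}$.

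Next I would bound $|M_{11}|$ from above, and this is where the combinatorial heart of the argument lies. Each parallelogram in $M$ is $e_A + e_B$ where $e_A\in\{[(0,0),(-1,-2)],[(0,0),(2,1)],[(-1,-2),(2,1)]\}$ is one of the three edge directions of $A$, and $e_B$ is an edge of the triangulation $T_B$. A parallelogram $e_A+e_B$ can occur in $M$ only if $e_A$ and $e_B$ are not parallel (otherwise the ``parallelogram'' is degenerate). The edges of $T_B$ come in a bounded number of directions: the two ladder-rail directions $(1,1)$ (from $l_i l_{i+1}$ and $r_i r_{i+1}$) and $(1,0)$ or $(0,1)$-type rungs $l_i r_i=(0,1)$, $l_i r_{i-1}=(1,-1)$, plus the ``long'' edges to $p$ and to $q$, of which there are $O(k)$ distinct directions. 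Crucially, the ladder edges in the $(1,1)$ direction are numerous (about $2k$ of them), but the direction $(1,1)$ is exactly the direction of one of the edges of $A$, namely $[(-1,-2),(2,1)]$, which has direction $(3,3)\parallel(1,1)$ — I need to check the other two edge directions of $A$, $(-1,-2)$ and $(3,3)$... wait, $(2,1)-(0,0)=(2,1)$ and $(-1,-2)-(0,0)=(-1,-2)$ and $(2,1)-(-1,-2)=(3,3)$. So the three edge directions of $A$ are $(2,1)$, $(-1,-2)$, $(3,3)\sim(1,1)$. The point is that a mixed subdivision is a coherent structure: the cells $a+G$ for $a\in A$ tile most of $[A+B]$, and the parallelograms appear only along the ``seams''. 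I would argue that the parallelograms using a fixed edge direction $e_A$ of $A$ form, in the mixed subdivision, a collection of cells that projects injectively onto a path in $T_B$, hence their number is at most the number of edges of $T_B$ in directions non-parallel to $e_A$ that can simultaneously be ``active.'' More usefully: the total area of all parallelograms is at most ${\rm area}[A+B]-|A|\cdot$(area used by translates)... no. The cleanest route is: in any mixed subdivision with $T_A$ a triangle, $|M_{11}| = $ (number of edges of $T_B$ that get ``pushed'' in some $A$-direction), and a careful look at the Cayley/lifting picture shows $|M_{11}|$ is bounded by a constant (independent of $k$, or growing far slower than $\sqrt k$) — in fact I expect $|M_{11}| = O(1)$ because $[A]$ is so small (a fixed triangle) that the Minkowski sum $[A]+[B]$ only differs from $[B]$ (suitably translated) in an $O(1)$-complexity ``collar,'' and all parallelograms live in that collar.

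To make the last point rigorous, I would use the following: the number of $d=2$ cells in any subdivision of $[A+B]$ of the form $e_A+e_B$ is bounded by the number of edges of $\partial([A]+[B])$ of the direction $e_A$ times... Actually the right tool is that $M$ restricted to the parallelograms of a fixed $A$-edge direction $e$ is a ``zonotopal strip'': these parallelograms form disjoint rows, and each row has length equal to $|e|$ times something, but their total number equals the number of edges $e_B$ of $T_B$ through which the corresponding seam passes, which is at most $O(\text{diam of }B / \text{separation})$... Let me instead just say: I would show $|M_{11}|\le c$ for an absolute constant $c$ (say $c\le 20$ or so, computed from the three edge directions of $A$ and the geometry of the three families of long edges of $T_B$), by observing that all parallelograms $e_A+e_B$ must have $e_B$ on the boundary of one of the ``fans'' at $p$ or $q$, or be one of the boundedly many $(1,1)$-ladder edges adjacent to the boundary of $[B]$, and that the ladder edges interior to $[B]$ cannot be used because they are parallel to an edge direction of $A$ only in one case and in the other two $A$-directions the would-be parallelogram $e_A + (l_i l_{i+1})$ does not fit inside $[A+B]$ without overlapping the translate $a+[B]$. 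Then for $k\ge 145$ we get $c < 4\sqrt{k}$, contradicting the requirement $|M_{11}|\ge 4\sqrt k$, and hence no corresponding mixed subdivision satisfies \eqref{MixedBrunnMinkowski}.

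The main obstacle I anticipate is the upper bound on $|M_{11}|$: making precise the claim that ``parallelograms only live in a bounded collar'' requires correctly modeling which pairs $(e_A,e_B)$ can coexist as a mixed cell, which in turn needs the combinatorial characterization of mixed subdivisions (the coherence/Cayley trick from \cite{LRF10}) rather than just a naive area count — a naive count of edges of $T_B$ gives only $O(k)$, which is too weak. The constant $145$ in the hypothesis strongly suggests the authors obtain an explicit bound like $|M_{11}|\le 4\sqrt{144}=48$ or $|M_{11}|\le$ a specific small number and then just need $4\sqrt k > $ that number, i.e. $k>$ that number squared over $16$; I would reverse-engineer the exact constant from whatever clean upper bound on $|M_{11}|$ the structure of $B$ forces.
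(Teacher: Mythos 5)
Your high-level strategy — compute $|T_A|=1$ and $|T_B|=4k$, reduce the failure of \eqref{MixedBrunnMinkowski} to a lower bound on $|M_{11}|$, and then argue that $|M_{11}|$ is bounded by a constant independent of $k$ — is indeed the route the paper takes. However, three issues keep the proposal from being a proof, and two of them are outright errors.

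First, the threshold computation is wrong. The weight formula gives $\|M\| = |T_A| + |T_B| + 2|M_{11}|$: the translated triangle of $T_A$ contributes $1$, each of the $4k$ translated triangles of $T_B$ contributes $1$, and each parallelogram contributes $\binom{2}{1}=2$. You wrote $\|M\| = |M_{11}| + |T_B|$, which both drops the translate of $[A]$ and counts each parallelogram with weight $1$ instead of $2$. Correcting this, \eqref{MixedBrunnMinkowski} is equivalent to $|M_{11}| \geq \sqrt{|T_A||T_B|} = 2\sqrt{k}$ (this is exactly Conjecture~\ref{ruzsabrunnconj-mixed}, and the paper already records this equivalence), not $|M_{11}| \geq 4\sqrt{k}$. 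The constant $145$ is then explained: the paper proves $|M_{11}| \leq 24$, and $24 < 2\sqrt{k}$ precisely when $k \geq 145$. Your bound $4\sqrt{k}$ would have let you get away with $k \geq 37$, which should already have been a warning sign.

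Second, you have misread $T_B$. The triangulation listed has no edges $[l_i,l_{i+1}]$ or $[r_i,r_{i+1}]$ at all — the interior ``rungs'' are only $[l_i,r_i]$ (direction $(0,1)$) and $[l_i,r_{i-1}]$ (direction $(1,0)$, since $l_i - r_{i-1} = (1,0)$, not $(1,-1)$ as you wrote). Thus your observation that the ladder rails in direction $(1,1)$ are parallel to the edge $[a_1,a_2]$ of $[A]$ is moot; those edges simply are not available to form parallelograms. A sound argument needs to deal with the edges that are actually there.

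Third, and most importantly, the upper bound on $|M_{11}|$ is never proven — you correctly identify this as the heart of the matter but then wave at ``coherence/Cayley tricks'' and a ``bounded collar'' without extracting a bound. The paper does this concretely: it first rules out certain edges and parallelograms of $M$ by exhibiting points of $A+B$ in their relative interiors; it then splits the edges of $T_B$ into \emph{long edges} (through $p$ or $q$) and \emph{small edges} (the rungs), shows that at most $12$ parallelograms can use long edges because each of the three edges $e_A$ of $[A]$ bounds at most two cells sharing $p+e_A$ (and likewise for $q$), and shows that at most $12$ parallelograms can use small edges because every such parallelogram is forced to be adjacent to a long-edge parallelogram. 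This gives $|M_{11}|\le 24$, and that, not a generic ``$O(1)$'' heuristic, is what makes the counterexample work.
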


 Now Conjecture~\ref{ruzsabrunnconj-mixed} is verified if either $A$ or $B$ has only three elements.

\begin{theo}
\label{triangle-mixed}
If $|B|=3$, then Conjecture~\ref{ruzsabrunnconj-mixed}
 holds for any  finite two dimensional set $A\subset\R^2$.
\end{theo}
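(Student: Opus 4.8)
\textbf{Proof plan for Theorem~\ref{triangle-mixed}.}

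The plan is to reduce to a concrete combinatorial construction. Write $B=\{b_1,b_2,b_3\}$ so that $[B]$ is a triangle and ${\rm tr}(B)=|T_B|=1$; the target inequality \eqref{MixedBrunnMinkowski2} then asks for a mixed subdivision $M$ of $[A+B]$ whose number of parallelograms satisfies $|M_{11}|\ge \sqrt{{\rm tr}(A)}$, i.e. $|M_{11}|^2\ge {\rm tr}(A)=2|A|-\Delta_A-2$. I would build $M$ directly from a fixed triangulation $T_A$ of $A$ together with the (essentially unique, since $|B|=3$) triangulation $T_B=\{[b_1,b_2,b_3]\}$. The natural candidate is the \emph{Cayley/Minkowski} mixed subdivision obtained by placing three translates $A+b_1, A+b_2, A+b_3$ and taking the common refinement-type construction: each edge $e$ of $T_A$ together with an edge $[b_i,b_j]$ of the triangle spans a parallelogram $e+[b_i,b_j]$, and each triangle of $T_A$ contributes, for a suitable choice of "folding" direction, two of the three translates $F+b_i$. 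Concretely, I would pick a generic linear functional $\ell$ on $\R^2$, order $b_1,b_2,b_3$ so that $\ell(b_1)<\ell(b_2)<\ell(b_3)$, and use the coherent mixed subdivision in which $[A+B]$ is swept in the $\ell$-direction: this is the standard way to produce a mixed subdivision from $T_A$ and $T_B$ and it is guaranteed to exist (a regular/coherent mixed subdivision always exists, see \cite{LRF10}).

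The combinatorial heart is then a counting identity. In such a mixed subdivision the parallelograms are exactly the cells $e+[b_i,b_j]$ where $e$ is an edge of $T_A$ that is "cut" by the sweep between the translates $A+b_i$ and $A+b_j$; I would show that the total number of parallelograms equals $E_A^{\rm int}+$(something on the boundary), where $E_A^{\rm int}$ is the number of interior edges of $T_A$, by using the fact that each interior edge of $T_A$ separates two triangles and hence gets "opened up" exactly once across the two relevant translates, while boundary edges of $T_A$ contribute according to how many of the edges of $[B]$ point "outward". Using the two-dimensional Euler relation for $T_A$ — namely $E_A^{\rm int}=3\Omega_A+\Delta_A-3$ and ${\rm tr}(A)=2\Omega_A+\Delta_A-2$ — I would derive a closed formula for $|M_{11}|$ in terms of $\Omega_A$ and $\Delta_A$ and then verify $|M_{11}|^2\ge 2\Omega_A+\Delta_A-2$ by elementary algebra, checking the small/degenerate cases ($\Omega_A=0$, $\Delta_A=3$, etc.) separately. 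Since Theorem~\ref{convex-position} already covers the no-interior-point case, I may freely restrict to $\Omega_A\ge 1$ there if convenient.

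The main obstacle I anticipate is controlling the boundary contribution precisely: the number of parallelograms adjacent to $\partial[A+B]$ depends on the interaction between the edge directions of $[A]$ and the edge directions of the triangle $[B]$, and a careless bound there can lose exactly the constant that makes the square-root inequality tight (recall equality occurs for homothetic saturated sets, e.g. $A=\Lambda\cap k\cdot[B]$). To handle this I would argue that one can always choose the sweep direction $\ell$ (equivalently, the lifting heights) so that the mixed subdivision is "as refined as possible" on the boundary, maximizing $|M_{11}|$; an averaging or exchange argument over the three cyclic choices of which edge of $[B]$ is the "bottom" should show at least one choice attains the needed count. A secondary, more bookkeeping-type difficulty is that $T_A$ is not unique when $A$ has interior points in non-general position, so I would note that ${\rm tr}(A)$ is triangulation-independent by \eqref{Eulerpoints} and pick any convenient $T_A$ (e.g. a pulling triangulation from a boundary vertex) to make the boundary count transparent.
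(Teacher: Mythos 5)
The counting identity at the heart of your plan is incorrect, and the error is not a small bookkeeping issue but a factor of roughly $|A|$. You claim that the parallelograms of a mixed subdivision correspond to the interior edges of $T_A$, ``each interior edge getting opened up exactly once.'' This is not what happens. In a mixed subdivision corresponding to $T_A$ and the single triangle $T_B=\{[b_1,b_2,b_3]\}$, each $2$-simplex $F$ of $T_A$ appears as exactly one translate $F+b_{j(F)}$ (that is condition (ii) of the definition). An interior edge $e$ of $T_A$ shared by two triangles $F_1,F_2$ produces a parallelogram $e+[b_{j(F_1)},b_{j(F_2)}]$ \emph{only if} $j(F_1)\neq j(F_2)$; if the two adjacent triangles receive the same translate, $e$ contributes nothing. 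So $|M_{11}|$ counts interior edges across which the ``color'' changes (plus some boundary cells), and this is typically much smaller than $E_A^{\rm int}$. A concrete instance: take $A=\{0,\ldots,n\}^2$ with the standard triangulation and $B=\{(0,0),(1,0),(0,1)\}$. Then $E_A^{\rm int}=3n^2-2n$, but a short area computation gives $\mathrm{Area}([A+B])-\mathrm{Area}([A])-\mathrm{Area}([B])=2n$; since every nondegenerate parallelogram $e+[b_i,b_j]$ in this example has area at least $1$, \emph{any} mixed subdivision has $|M_{11}|\le 2n$, two orders of magnitude below $E_A^{\rm int}$. Your proposed identity therefore cannot hold, and the algebraic verification you outline afterward is built on it.

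Once the identity fails, the rest of the argument also lacks a load-bearing step: you say you would choose the lifting so as to ``maximize $|M_{11}|$'' on the boundary, but you do not give any lower bound on the maximum over coherent choices, and it is exactly this lower bound that is the whole content of the theorem. (Note that in the grid example the required bound is $|M_{11}|\ge\sqrt{{\rm tr}(A)}=n\sqrt2$, and $2n$ is barely enough — the inequality is genuinely tight up to constants, so a coarse counting cannot work.) By contrast, the paper does not try to estimate $|M_{11}|$ for a generic or coherent mixed subdivision at all. It introduces the notion of a \emph{proper star}: three paths $\sigma_1,\sigma_2,\sigma_3$ in $T_A$ emanating from a common vertex $a$, where $\sigma_i$ is transversal to the direction $v_j-v_k$ of the $i$-th side of $B$, and which wind around $a$ consistently with the orientation of $[v_1,v_2,v_3]$. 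Lemma~\ref{proper-mixed} then \emph{constructs} a mixed subdivision with exactly $|M_{11}|=|\sigma_1|+|\sigma_2|+|\sigma_3|$ parallelograms, and the existence of a proper star of total length at least $\sqrt{|T_A|}$ is established via an averaging argument over ``horizontal-vertical paths'' (Proposition~\ref{horvert}), together with Lemma~\ref{improve-hor-vert}. If you want to repair your approach you would need to replace your identity with a genuine lower bound on the number of color-changing edges in some achievable coloring $j\colon T_A\to\{1,2,3\}$, which is essentially what the proper-star construction accomplishes.
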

{\bf Remark } It follows that if $B$ is the sum of sets of
cardinality three, then Conjecture~\ref{ruzsabrunnconj} holds for any
finite two dimensional set $A\subset\R^2$. For example, if $m\geq 1$ is an integer,
and
 $B=\{(t,s)\in\Z^2:t,s\geq 0\mbox{ and }t+s\leq m\}$,
or $B=\{(t,s)\in\Z^2:|t|,|s|\leq m\mbox{ and }|t+s|\leq m\}$.\\

Conjecture~\ref{ruzsabrunnconj} was verified by B\"or\"oczky, Hoffman \cite{BoH15} if  $A$ and $B$ are in convex position; namely, $A\subset\partial[A]$ and $B\subset\partial[B]$.
Here we
even verify 
Conjecture~\ref{ruzsabrunnconj-mixed} under these conditions.

\begin{theo}
\label{convex-position}
Let  $A,B\subset\R^2$ be finite two dimensional sets. If  $A\subset\partial[A]$ and $B\subset\partial[B]$
then Conjecture~\ref{ruzsabrunnconj-mixed} 
 holds. 
\end{theo}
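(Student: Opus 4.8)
The plan is to build explicit triangulations $T_A$, $T_B$ and a matching mixed subdivision $M$ of $[A+B]$ by exploiting the hypothesis that all points of $A$ lie on $\partial[A]$ and all points of $B$ lie on $\partial[B]$. First I would set $n=|A|$ and $m=|B|$, so that (since there are no interior points) $\mathrm{tr}(A)=n-2$, $\mathrm{tr}(B)=m-2$, and any triangulation $T_A$ has $|T_A|=n-2$ triangles, $|T_B|=m-2$ triangles. The target inequality \eqref{MixedBrunnMinkowski2} then reads $|M_{11}|\ge\sqrt{(n-2)(m-2)}$, and by AM--GM it suffices (though it is weaker) to aim instead for the sharper combinatorial fact that, for the right choice of $M$, the number of parallelograms is exactly the number of ``mixed'' interactions between edges of $[A]$ and edges of $[B]$ — this is where the Minkowski-sum structure of polygons enters. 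Indeed, for two convex polygons $P=[A]$ and $Q=[B]$ the Minkowski sum $P+Q$ has a canonical subdivision into translates of $P$, translates of $Q$, and parallelograms $e+f$ where $e$ ranges over edges of $P$ and $f$ over edges of $Q$; the number of such parallelograms equals $\sum_{e,f}$ (one for each pair of non-parallel edge directions, with parallel pairs merging), which is typically quadratic in the numbers of edges.

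The key steps, in order, would be: (1) fix \emph{fan triangulations} of $P=[A]$ and $Q=[B]$ from a chosen vertex $a_0\in A$, resp.\ $b_0\in B$, so that $|T_A|=n-2$ and $|T_B|=m-2$; (2) recall the standard ``Cayley trick'' / mixed subdivision construction: a generic mixed subdivision $M$ of $P+Q$ refining the canonical polygon-sum decomposition consists of the cells $F+b$ ($F$ a triangle of $T_A$, $b$ a vertex), the cells $a+G$, and parallelogram cells $e+f$ coming from an edge $e$ of $T_A$ lying on $\partial P$ and an edge $f$ of $T_B$ lying on $\partial Q$ whose outer normals interleave; (3) count: the triangles $F+b$ contribute $n-2$ copies, the triangles $a+G$ contribute $m-2$ copies, and then show $|M_{11}|$ is at least the product structure $(n-2)^{1/2}(m-2)^{1/2}$ — concretely, because $\partial P$ has $n$ edges and $\partial Q$ has $m$ edges, and in the canonical decomposition of $P+Q$ the parallelograms already number on the order of $n+m$ (one per merged normal-cone wedge), but refining through the interior via the fan triangulations multiplies the available boundary edges, giving $|M_{11}|\ge \max(n-2,m-2)\ge\sqrt{(n-2)(m-2)}$; (4) verify conditions (i) and (ii) of the definition of mixed subdivision for the constructed $M$, and check $\|M\|$ via \eqref{Eulerpoints}-type bookkeeping so that \eqref{MixedBrunnMinkowski} for $d=2$ follows from \eqref{MixedBrunnMinkowski2}.

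The main obstacle I anticipate is step (3): making the parallelogram count rigorous and tight. The naive canonical polygon-sum decomposition gives only $O(n+m)$ parallelograms, which beats $\sqrt{(n-2)(m-2)}$ only when one of $n,m$ is bounded; in general one must route the fan triangulations of $P$ and $Q$ so that the ``mixed'' cells seen along a sweep genuinely realize a product-type count. The cleanest way is probably to orient both fans from vertices $a_0,b_0$ chosen on parallel supporting lines of $P$ and $Q$ (say both minimizing a generic linear functional), so that the mixed subdivision becomes a \emph{staircase}: one can then biject parallelograms with pairs (triangle of $T_A$ crossed during the sweep, triangle of $T_B$ crossed during the sweep) that are simultaneously ``active,'' and a monotonicity/interleaving argument along the sweep direction yields at least $(n-2)+(m-2)-1$ or, with a more careful merge, the full $\sqrt{(n-2)(m-2)}$. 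A fallback, should the direct count stall, is to induct on $n+m$: remove a vertex of $[A]$ or $[B]$ that is not needed (using Theorem~\ref{oneextra} or Theorem~\ref{triangle-mixed} as the base case when $\min(n,m)=3$), track how $|M_{11}|$, $|T_A|$, $|T_B|$ change, and show the inequality is preserved — here the convex-position hypothesis is essential because deleting a boundary vertex of a convex-position set keeps it in convex position and decreases the triangle count by exactly one.
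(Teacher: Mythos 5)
Your high-level framework (pick triangulations of $[A]$ and $[B]$, construct a mixed subdivision, count parallelograms) matches the paper's, and your calculation $\mathrm{tr}(A)=n-2$, $\mathrm{tr}(B)=m-2$ is correct. But there are two substantive problems that go beyond loose ends.

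First, the target bound you set yourself in step (3), namely $|M_{11}|\ge \max(n-2,m-2)$, is not achievable in general and cannot be the right quantity to aim for. To see this, take $B$ a triangle ($m=3$) and $A$ a large convex $n$-gon; your claim would then require $n-2$ parallelograms. But the paper's Theorem~\ref{triangle-mixed} -- which handles exactly this case and is essentially tight -- produces only about $\sqrt{n-2}$ parallelograms, coming from paths in $A$ whose total length is roughly $\sqrt{|T_A|}$, not $|T_A|$. The intermediate assertion $(n-2)+(m-2)-1$ that you float for the ``sweep'' is even stronger and fails for the same reason. The paper instead proves the weaker, correct bound $|M_{11}|\ge\tfrac{\mathrm{tr}(A)+\mathrm{tr}(B)}{2}$, which still yields $\sqrt{\mathrm{tr}(A)\,\mathrm{tr}(B)}$ by AM--GM, \emph{and only after} reducing to $|A|,|B|\ge 4$ via Theorem~\ref{triangle-mixed} to avoid exactly the degenerate triangle case above. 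Relatedly, your remark that the canonical decomposition's $O(n+m)$ parallelograms ``beats $\sqrt{(n-2)(m-2)}$ only when one of $n,m$ is bounded'' is a computational slip: $n+m\ge 2\sqrt{nm}$ always; the issue is not the order of growth but whether the constant is at least $1/2$, which is precisely what must be proved.

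Second, you miss an obstruction the paper calls a \emph{strange pair}: $[B]$ a triangle whose three outer normals all appear as outer normals of sides of $[A]$. In that case the normal-fan / ``merged normal cone'' picture degenerates ($[A+B]$ inherits all its edge directions from $[A]$), and the staircase/sweep heuristic you describe does not by itself produce the needed parallelograms. The paper's Case~1 (non-strange pairs) chooses a generic sweep direction $v$ and shows that among the four translated copies $A+b_0$, $A+b_1$, $a_0+B$, $a_1+B$ at least $\tfrac12(\Delta_A+\Delta_B)-3$ points land in the interior of $[A+B]$; it then converts the boundary arcs $\sigma^A_{\mathrm{upp}},\sigma^A_{\mathrm{low}},\sigma^B_{\mathrm{upp}},\sigma^B_{\mathrm{low}}$ into parallelogram strips and uses $st\ge s+t-1$ to get the count. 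Cases~2--3 then handle strange pairs and similar triangles by a direct side-by-side count. Your proposal contains neither the interior-point count nor the strange-pair case split, and your suggested fallback induction (delete a boundary vertex, track $|M_{11}|$) is not developed enough to know whether it closes the gap; note in particular that deleting a vertex can change the set of outer normals and hence whether the pair is strange.
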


Part of the reason why we could not verify Conjecture~\ref{ruzsabrunnconj} in general
is that, except for Theorem~\ref{triangle-mixed}, our arguments
 actually prove the inequality ${\rm tr}(A+B) \ge 2 ({\rm tr}(A) + {\rm tr}(B))$, which is stronger than Conjecture~\ref{ruzsabrunnconj}, but which does not hold  for all pairs with $A \subset B$. For example, if $A$ are the nonnegative integer points with sum of coordinates at most $k$ and $B$ is the same with sum of coordinates at most $l$, we have ${\rm tr}(A+B) = (k+l)^2$, ${\rm tr}(A)=k^2$ and ${\rm tr}(B)=l^2$. So we have 
${\rm tr}(A+B) < 2 ({\rm tr}(A) + {\rm tr}(B))$  if $k\neq l$.

Turning to higher dimensions, we note that if $T_A=T_B$, then 
a mixed subdivision satisfying (\ref{MixedBrunnMinkowski}) does exist.  

\begin{theo}
\label{TA=TB}
For a finite  $d$--dimensional set $A\subset\R^d$ and for any triangulation $T_A$ of $[A]$ using $A$ as the set of vertices there exists a corresponding mixed subdivision $M$ of $[A+A]$ such that
$$
\|M\|= 2^d|T_A|.
$$
\end{theo}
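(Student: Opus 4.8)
The plan is to produce the desired mixed subdivision explicitly from the Cayley/staircase picture of a self-sum, exploiting the fact that $T_A=T_B$ means we may use the \emph{same} triangulation on both summands. First I would recall that the mixed subdivisions of $[A+B]$ corresponding to $T_A$ and $T_B$ are in bijection with certain polyhedral subdivisions of the Cayley polytope $[A\times\{0\}\cup B\times\{1\}]$; concretely, given a generic piecewise-linear lifting $\omega_A$ of $A$ inducing $T_A$, and the matching lifting $\omega_B=\omega_A$ of $B=A$, the \emph{Cayley trick} (see \cite{LRF10}) yields a mixed subdivision $M$ of $[A+A]$ whose cells are exactly the Minkowski sums $F+G$ over pairs of faces $F,G$ of $T_A$ whose lifted graphs are ``compatible'' along the Cayley segment. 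So the first step is to fix one regular triangulation $T_A$ realizing the given combinatorial triangulation (regularity is not a loss: in dimension $d$ any triangulation can be replaced for counting purposes, and Theorem~\ref{TA=TB} only asserts existence of \emph{some} corresponding $M$), and to read off $M$ from the common lifting.

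The key computation is then the weighted count $\|M\|$. For a single $d$-simplex $\sigma$ of $T_A$, consider the portion of $M$ lying ``over'' $\sigma+\sigma$: when the same lifting is used on both copies, the Cayley trick subdivides $\sigma+\sigma$ exactly into the mixed cells $F+G$ with $F,G$ faces of $\sigma$ and $\dim F+\dim G=d$, and this is precisely the \emph{standard hypersimplicial / Minkowski subdivision of the dilate $2\sigma$}. The weighted count of that local picture is $\sum_{i=0}^{d}\binom{d}{i}\,(\#\,i\text{-faces of }\sigma \text{ paired with a complementary }(d-i)\text{-face})$, and a direct check — essentially the identity $\sum_{i}\binom{d}{i}\binom{d+1}{i+1}\binom{d+1}{d-i+1}\big/(\text{overcount}) = 2^d$, or more cleanly the observation that $2\sigma$ has normalized volume $2^d$ times that of $\sigma$ and every cell $F+G$ in its mixed subdivision has weight equal to its normalized volume — gives that the weighted count over $\sigma+\sigma$ equals $2^d$. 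Summing over the $|T_A|$ full-dimensional simplices of $T_A$, and checking that the local pieces glue to a global mixed subdivision of $[A+A]$ (they do, because the common lifting is globally consistent and $[A+A]=2[A]=\bigcup_\sigma 2\sigma$ with the dilations of the simplices tiling $2[A]$), yields $\|M\| = 2^d |T_A|$.

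I would organize the write-up as: (1) fix a regular triangulation $T_A$ with lifting $\omega$; (2) define $M$ via the Cayley trick with the pair $(\omega,\omega)$, and verify conditions (i)--(ii) in the definition of mixed subdivision — condition (ii), the existence and uniqueness of the ``placing'' points $a\in A$, $b\in B=A$ with $F+b, a+G\in M$, is where the equality $T_A=T_B$ is used, since the coherent lifting picks out a consistent flag; (3) compute $\|M\|$ locally over each $2\sigma$ and sum. The main obstacle I expect is step (2)/(3) bookkeeping: making rigorous that the restriction of $M$ to $2\sigma$ is the \emph{full} mixed (Minkowski) subdivision of the dilated simplex with all $\binom{d}{i}$-weighted cells appearing exactly once, and that these local subdivisions are mutually compatible across shared facets of adjacent simplices of $T_A$. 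Once that is pinned down, the weight identity reduces to the clean statement that the normalized volume of $2\sigma$ is $2^d$ and the mixed-cell weights are additive, so the count is immediate; alternatively one can avoid volumes entirely and argue by the explicit combinatorics of the standard triangulation of the product of a simplex with a segment iterated $d$ times, each iteration doubling the simplex count, giving the $2^d$ factor directly.
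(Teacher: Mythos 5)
Your high-level picture — reduce to each $d$-simplex $\sigma$ of $T_A$, observe that the cells over $\sigma+\sigma$ contribute $2^d$, and sum — is the right one and matches what the paper does. But the specific construction you propose has a genuine flaw, and the reduction to regular triangulations is also not legitimate.

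The flaw is in step (2). If you take the Cayley lift with $\omega_B=\omega_A$, the lifted Cayley polytope over each prism $\sigma\times[0,1]$ has its lower envelope equal to the \emph{whole prism} (the lift is affine in the Cayley direction), so the Cayley trick returns the \emph{coarsest} mixed subdivision, in which $\sigma+\sigma$ is a single cell. That cell is $\sigma+\sigma$ with $\dim\sigma+\dim\sigma=2d\ne d$, so it violates condition (i) in the paper's definition of a mixed subdivision; you do not get a fine mixed subdivision, and the local "standard Minkowski subdivision of $2\sigma$" you describe never appears. To obtain a fine mixed subdivision you must break the symmetry between the two copies of $A$ — either by perturbing one lift generically and then tracking what the resulting cells are, or (more cleanly, and what the paper does) by fixing a total order $a_1<\cdots<a_n$ on $A$ and taking, for each $F=[a_{i_0},\ldots,a_{i_k}]\in T_A$ with $i_0<\cdots<i_k$, the \emph{staircase} cells
$[a_{i_0},\ldots,a_{i_m}]+[a_{i_m},\ldots,a_{i_k}]$ for $m=0,\ldots,k$. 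The paper verifies by a short barycentric-coordinate computation that these tile $F+F$ with disjoint relative interiors, hence give a polyhedral subdivision of $[A+A]$, and then the local count is the clean identity $\sum_{m=0}^d\binom{d}{m}=2^d$ per $d$-simplex (one cell for each break point $m$, of weight $\binom{d}{m}$), with no overcount factor and no combinatorial identity of the kind you gesture at. Your volume heuristic for the weight of a single staircase cell of a simplex is essentially sound, but it only applies once you actually have the staircase cells, which the unperturbed Cayley construction does not produce.

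Separately, the claim that "regularity is not a loss" does not hold up: the theorem quantifies over every triangulation $T_A$ of $[A]$ and asks for a mixed subdivision corresponding to \emph{that} $T_A$. Replacing $T_A$ by a regular triangulation changes the statement being proved. The staircase construction above has the virtue of being entirely combinatorial and working for an arbitrary $T_A$, regular or not, which is why the paper takes that route. If you still want to phrase things in Cayley-trick language, the right move is to produce, for each prism $F\times[0,1]$, the staircase triangulation induced by the vertex ordering, check compatibility across shared facets of adjacent simplices (immediate, since the staircase on a face is determined by the restricted ordering), and only then invoke the combinatorial Cayley bijection; but at that point you have reproduced the paper's proof rather than a lifting-based one.
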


Therefore in certain cases, 
mixed subdivisions point to a higher dimensional generalization of Conjecture~\ref{ruzsabrunnconj}. This is specially welcome knowing that, if $d\geq 3$, then the order of the number of $d$-simplices in a triangulation of the convex hull of a finite $A\subset\R^d$ spanning $\R^d$ might be as low as $|A|$ and  as high as $|A|^{\lfloor d/2\rfloor}$ for the same $A$. In particular, one can not assign the number of $d$-simplices as a natural notion of discrete volume if $d\geq 3$.

\section{Proof of Theorem~\ref{A=B}}

We will actually prove that
\begin{equation}
\label{[A]=[B]strong}
{\rm tr}(A+B) \ge 2 {\rm tr}(A) + 2 {\rm tr}(B),
\end{equation}
a stronger inequality than Conjecture~\ref{ruzsabrunnconj}.

For a finite two dimensional set  $X\subset\R^2$, we define
$$
f_X(z)=\left\{
\begin{array}{rl}
1&\mbox{ \ if $z\in\partial[X]$}\\[0.5ex]
2&\mbox{ \ if $z\in{\rm int}\,[X]$}
\end{array}
\right. ,
$$
so that 
$$
{\rm tr}(X)=\left( \sum_{z\in X}f_X(z)\right)-2.
$$

\begin{lemma} Let  $A,B\subset\R^2$   satisfy $[A]=[B]$. Then  inequality \eqref{[A]=[B]strong} holds. 
Moreover, equality  in (\ref{[A]=[B]strong}) yields $A=B$.
\end{lemma}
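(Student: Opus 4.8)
The plan is to exploit the superadditivity of the three quantities involved when passing from $A$ and $B$ to $A+B$, using the defining formula $\mathrm{tr}(X) = \left(\sum_{z\in X} f_X(z)\right) - 2$ together with $[A]=[B]=:P$. Write $P = [A] = [B]$. The key observation is that for any $a\in A$ and $b\in B$ we have $f_{A+B}(a+b) \ge f_A(a) + f_B(b) - 1$, because $[A+B] = P + P$ is a scaled copy of $P$, a boundary point of $A+B$ can only arise as the sum of two boundary points (of $A$ and of $B$ respectively), and an interior point contributes $2 \ge f_A(a)+f_B(b)-1$ in all remaining cases. The subtlety is that $a+b$ need not be distinct for distinct pairs $(a,b)$, so I cannot simply sum this over all pairs; instead I would pick, for each vertex $v$ of $P$, the translate $v_A + B$ (where $v_A$ is the vertex of $A$ at position $v$, which lies in $A$ since $A$ is two-dimensional with $[A]=P$) and similarly $A + v_B$, and carefully select a sub-collection of pairs whose sums are all distinct and which still captures enough weight.

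More concretely, I would first handle the easy direction: choose a vertex $v$ of $P$ and its representatives $v_A\in A$, $v_B\in B$. Then $v_A + B \subset A+B$ and $A + v_B \subset A+B$, and these two translated copies of $B$ and $A$ meet only in the single point $v_A + v_B$. Since $v_A\in\partial P$ and $v_B\in\partial P$, for every $b\in B$ the point $v_A + b$ satisfies $f_{A+B}(v_A+b) \ge f_B(b)$ when $b$ is in the interior (it stays interior-or-boundary, contributing at least $f_B(b)$... actually I must be careful: $v_A + b$ for interior $b$ lies in the interior of $P+P$ so contributes $2 = f_B(b)$), and for boundary $b$ it contributes at least $1 = f_B(b)$; similarly for $A + v_B$. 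Summing $f_{A+B}$ over the disjoint-union $(v_A + B) \cup (A + v_B)$ (overlap only at $v_A+v_B$, where we double-count a contribution of $1$) gives $\mathrm{tr}(A+B) + 2 \ge \sum_{z\in A+B} f_{A+B}(z) \ge \left(\sum_{b} f_B(b)\right) + \left(\sum_a f_A(a)\right) - 1 = (\mathrm{tr}(B)+2) + (\mathrm{tr}(A)+2) - 1$, which only yields $\mathrm{tr}(A+B)\ge \mathrm{tr}(A)+\mathrm{tr}(B)+1$ — far from the factor-$2$ inequality \eqref{[A]=[B]strong}. So this crude argument is insufficient and the real work is to gain the missing factor of roughly $2$.

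To get the factor $2$, I would use \emph{two} different vertices of $P$, or better, a vertex $v$ together with a generic direction, to produce essentially disjoint "doubled" copies. The cleaner route: for each vertex $v$ of $P$, the set $v_A + B$ is a copy of $B$ sitting in the corner of $P+P$ near $2v$, and the sets $\{v_A + B : v \text{ vertex of } P\}$ together with $\{A + v_B\}$ can be arranged to be pairwise disjoint except along lower-dimensional overlaps that I control via inclusion–exclusion. Since $P$ has at least $3$ vertices, summing the contributions of two corner-copies of $B$ plus one copy of $A$ (suitably translated and made disjoint) should give $\mathrm{tr}(A+B) \gtrsim 2\,\mathrm{tr}(B) + \mathrm{tr}(A)$; doing it symmetrically and combining gives \eqref{[A]=[B]strong}. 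The main obstacle is exactly this disjointness bookkeeping: the corner copies $v_A+B$ for different vertices $v$ will generally overlap in a neighborhood of the edges of $P+P$, and I must show the overlaps are small enough (contribute at most the $-2$ slack in $\mathrm{tr}(X)+2 = \sum f_X$) — this is where the collinearity/equal-spacing hypotheses in case (b) of Theorem~\ref{A=B} and the saturation hypothesis in case (a) will surface as the precise equality conditions, since equality forces every such overlap and every inequality $f_{A+B}(a+b)\ge f_A(a)+f_B(b)-1$ to be tight simultaneously.

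Finally, for the equality characterization "$\mathrm{tr}(A+B) = 2\,\mathrm{tr}(A)+2\,\mathrm{tr}(B)$ implies $A=B$": I would trace back through the tight cases of the above. Equality forces, for the chosen representatives, that $f_{A+B}(v_A + b) = f_B(b)$ and $f_{A+B}(a + v_B) = f_A(a)$ exactly, and that the various translated copies used are genuinely disjoint off the prescribed low-dimensional pieces — and moreover that $A+B$ contains \emph{no further points} beyond those forced, i.e. $A+B = (v_A+B)\cup(A+v_B)\cup(\text{the controlled overlaps})$. Running this for all vertices $v$ of $P$ and intersecting the resulting constraints pins down that a point $x\in A$ on an edge $e$ of $P$ must also lie in $B$ and vice versa (comparing edge-copies), and then an interior point of $A$ must lie in $B$ by looking at $x + v_B$ versus $v_A + x'$ — ultimately yielding $A=B$. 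I expect the disjointness accounting in the factor-$2$ step to be the crux; once that is set up with explicit control of the overlap sets, both the inequality and the equality case should follow by carefully reading off when each estimate is tight.
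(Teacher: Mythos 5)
Your plan misses the paper's key structural idea and, as you yourself sense, it cannot be pushed through in the form you describe. The paper does \emph{not} work globally with the vertices of $P=[A]=[B]$; instead it picks a triangulation $T$ of $[A]$ whose vertex set is $A\cap B$, observes that both sides of the target inequality are additive over the triangles $t$ of $T$ (because $\mathrm{tr}(X)=\bigl(\sum_{z\in X}f_X(z)\bigr)-2$ and the $2t$'s tile $[A+B]$), and then proves the inequality \emph{inside a single triangle} for $A_t=A\cap t$, $B_t=B\cap t$, which share exactly the three vertices $v_1,v_2,v_3$. Within one triangle the collision analysis is trivial (paper's claim \eqref{tsums}): the sums $v_i+p$ with $p$ non-vertex and the sums $v_i+v_j$ are all distinct, so one can literally add up the lower bounds $\sum_j f_{A_t+B_t}(p+v_j)\ge 2f_{A_t}(p)$ without any overlap bookkeeping, and the factor $2$ appears because each non-vertex $p$ is paired with all three vertices. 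None of this localization is present in your sketch, and it is exactly the missing ingredient.

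The gaps in your global ``corner-copy'' approach are concrete. First, the stated inequality $f_{A+B}(a+b)\ge f_A(a)+f_B(b)-1$ is false when both $a$ and $b$ are interior (right side $=3$, left side $=2$); it only holds when at least one of the two is a boundary point, which is the case you actually use, but you should state the correct restricted version. Second, the collection $\{v_A+B : v \text{ vertex of }P\}$ is far from ``pairwise disjoint except for lower-dimensional overlaps'' once $P$ has $\ge 5$ vertices: the total area of the copies is $k\cdot\mathrm{area}(P)$ versus $\mathrm{area}(2P)=4\,\mathrm{area}(P)$, so the overlap region has area growing with $k$ and cannot be absorbed into the constant slack $-2$ in $\mathrm{tr}(X)+2=\sum f_X$. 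Third, even granting the heuristic $\mathrm{tr}(A+B)\gtrsim 2\,\mathrm{tr}(B)+\mathrm{tr}(A)$, the proposed ``do it symmetrically and combine'' step does not give the target: the two inequalities $\mathrm{tr}(A+B)\ge 2\,\mathrm{tr}(B)+\mathrm{tr}(A)$ and $\mathrm{tr}(A+B)\ge 2\,\mathrm{tr}(A)+\mathrm{tr}(B)$ average to $\mathrm{tr}(A+B)\ge \tfrac32(\mathrm{tr}(A)+\mathrm{tr}(B))$ and give no more as a maximum, so you never reach the coefficient $2$ on both terms. All three issues disappear once you triangulate by $A\cap B$ first, which is the step to look for.
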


\begin{proof} Let $T$ be a triangulation of $[A]=[B]$ using the points in $A\cap B$ as vertices. One nice thing about inequality~\eqref{[A]=[B]strong} is that, since it is linear, it is additive over the triangles of $T$. Therefore, it suffices to show that, for each triangle  $t$  of $T$, 
if $A_t=A\cap t$ and $B_t=B\cap t$, then
\begin{equation}
\label{[A]=[B]triangle}
{\rm tr}(A_t+B_t) \ge 2 {\rm tr}(A_t) + 2 {\rm tr}(B_t),
\end{equation}
and  that equality in (\ref{[A]=[B]triangle}) implies that $A_t=B_t$ consists of the three vertices of $t$ alone. Moreover, inequality \eqref{[A]=[B]triangle} is equivalent to
\begin{equation}
\label{[A]=[B]f}
\sum_{p\in A_t+B_t }f_{A_t+B_t}(p)=
\left(\sum_{p\in A_t }f_{A_t}(p)\right)+
\left(\sum_{p\in B_t }f_{B_t}(p)\right)  - 6.
\end{equation}

Let $A_t\cap B_t=\{v_1,v_2,v_3\}$ be the three vertices of the triangle $t=[A_t]=[B_t]$. We claim that if $i,j\in\{1,2,3\}$, $p\in (A_t\cup B_t)\backslash \{v_1,v_2,v_3\}$ and  
$q\in A_t\cup B_t$, then
\begin{equation}
\label{tsums}
v_i+p=v_j+q\mbox{ \ yields \ } v_i=v_j\mbox{ \ and \ } p=q.
\end{equation}
We may assume that $v_i$ is the origin and, to get a contradiction, $v_i\neq v_j$. Then the line $l$ passing through $v_j$ and parallel to the side of $t$ opposite to $v_j$ separates  $t$ and $v_j+t$, and intersects $t$ only in $v_j\neq p$. Since $v_j+q\in v_j+t$, we get the desired contradiction. 

It follows from (\ref{tsums}) that the six points $v_i+v_j$, $1\leq i\leq j\leq 3$, and the points of the form
$v_i+p$, $i=1,2,3$ and $p\in (A_t\cup B_t)\backslash \{v_1,v_2,v_3\}$ are all different. Since the six points $v_i+v_j$, $1\leq i\leq j\leq 3$, belong to $\partial(A_t+B_t)$, we have 
\begin{equation}
\label{tvertices}
\left(\sum_{i,j=1,2,3}f_{A_t+B_t}(v_i+v_j)\right)=
\left(\sum_{i=1}^3f_{A_t}(v_i)\right)+
\left(\sum_{j=1}^3f_{B_t}(v_j)\right) = 6.
\end{equation}
On the other hand, we claim that, if $p\in A_t\backslash \{v_1,v_2,v_3\}$ and $q\in B_t\backslash \{v_1,v_2,v_3\}$,
then
\begin{equation}
\label{tnovertices}
\begin{array}{rcl}
\sum_{j=1}^3f_{A_t+B_t}(p+v_j)&>&2f_{A_t}(p) \\
\sum_{i=1}^3f_{A_t+B_t}(v_i+q)&>&2f_{B_t}(q).
\end{array}
\end{equation}
Indeed, the inequality readily holds if $p\in\partial[A_t]$ and, if  $p\in{\rm int}\,[A_t]$, then $p+v_j\in{\rm int}\,[A_t+B_t]$ for $j=1,2,3$, as well, yielding (\ref{tnovertices}).

By combining \eqref{tvertices} and \eqref{tnovertices} we get \eqref{[A]=[B]f} and in turn \eqref{[A]=[B]strong}. Moreover, \eqref{tnovertices} shows that if equality holds in \eqref{[A]=[B]triangle} then  $A_t=B_t$ and, therefore, if equality holds in \eqref{[A]=[B]strong}, then $A=B$.\qed
\end{proof}

For a finite two dimensional set $A\subset \R^2$ and a triangulation $T$ of $A$ we denote by $A_T$  the union of $A$
and the set of midpoints of the edges of $T$ (see Figure \ref{fig:midpoints}).

\begin{lemma}\label{lem:A=B} Let $A\subset \R^2$ be a finite a finite two dimensional set. The equality 
$$
{\rm tr}(A+A)=4\cdot{\rm tr}(A)
$$
holds  if, and only if, for every triangulation $T$ of $[A]$, we have $A_T=\frac12(A+A)$.
 \end{lemma}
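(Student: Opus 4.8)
The statement to prove is Lemma~\ref{lem:A=B}, characterizing equality ${\rm tr}(A+A)=4\,{\rm tr}(A)$ in terms of the midpoint sets $A_T$. The plan is to first prove that $A_T\subseteq\frac12(A+A)$ always holds (for any triangulation $T$ of $[A]$) and that ${\rm tr}(A_T)=4\,{\rm tr}(A)$, so that the midpoint set is always a ``lower bound'' witness with exactly $4\,{\rm tr}(A)$ triangles. The containment $A_T\subseteq\frac12(A+A)$ is immediate: a vertex $a\in A$ equals $\frac12(a+a)$, and the midpoint of an edge $[a,a']$ of $T$ equals $\frac12(a+a')$. For the count ${\rm tr}(A_T)=4\,{\rm tr}(A)$ one can argue triangle-by-triangle in $T$: placing midpoints on the three edges of each triangle of $T$ subdivides it into $4$ subtriangles, and these local subdivisions are consistent across shared edges (the midpoint of a shared edge is common), so $A_T$ is precisely the vertex set of a triangulation of $[A]$ with $4|T|=4\,{\rm tr}(A)$ triangles; since in the plane the number of triangles depends only on the vertex set, ${\rm tr}(A_T)=4\,{\rm tr}(A)$.

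\textbf{Main comparison.} Next I would compare $A_T$ with $\frac12(A+A)$. Since $A_T\subseteq\frac12(A+A)\subseteq[A]$ and both have the same convex hull $[A]$ (the midpoints and vertices already span $[A]$), monotonicity of ${\rm tr}$ under adding points inside a fixed polygon gives
$$
{\rm tr}\!\left(\tfrac12(A+A)\right)\;\ge\;{\rm tr}(A_T)\;=\;4\,{\rm tr}(A),
$$
with equality if and only if $\frac12(A+A)=A_T$ (adding any point strictly increases ${\rm tr}$, by \eqref{Eulerpoints}, since a new interior point raises $\Omega$ and hence ${\rm tr}$, and a new boundary point either lies on an existing edge of $[A]$—raising $\Delta$ and $|A|$ so that $2|A|-\Delta$ grows—or is a genuinely new vertex). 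Since $\frac12(A+A)$ is a translate-and-scale of $A+A$, we have ${\rm tr}(A+A)={\rm tr}(\frac12(A+A))$. Combining, ${\rm tr}(A+A)=4\,{\rm tr}(A)$ holds if and only if $\frac12(A+A)=A_T$ for this (equivalently, by the chain of equalities, for \emph{every}) triangulation $T$.

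\textbf{The ``for every'' subtlety.} The one point that needs care is the quantifier: the displayed equality ${\rm tr}(A+A)=4\,{\rm tr}(A)$ is a single statement, while the right-hand side of the Lemma quantifies over all triangulations $T$. The resolution is that ${\rm tr}(A_T)=4\,{\rm tr}(A)$ for \emph{every} $T$, so the inequality ${\rm tr}(\frac12(A+A))\ge 4\,{\rm tr}(A)$ and its equality condition $\frac12(A+A)=A_T$ are obtained simultaneously for every $T$; hence if equality ${\rm tr}(A+A)=4\,{\rm tr}(A)$ holds then $\frac12(A+A)=A_T$ for all $T$, and conversely $\frac12(A+A)=A_T$ for even one $T$ already forces ${\rm tr}(A+A)=4\,{\rm tr}(A)$. (Incidentally this shows all the midpoint sets $A_T$ coincide when equality holds, which is a pleasant consistency check.)

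\textbf{Expected obstacle.} The genuinely delicate part is the strict monotonicity statement ``adding any point of $[A]$ not already in the vertex set strictly increases ${\rm tr}$,'' which underlies the equality characterization; this needs the case analysis via \eqref{Eulerpoints} distinguishing interior points, points on the relative interior of a boundary edge of $[A]$, and—vacuously—new vertices (impossible since $[A_T]=[A]$). Everything else is bookkeeping: verifying that the local ``edge-midpoint'' subdivisions of the triangles of $T$ glue into a global triangulation, and that $[A_T]=[\frac12(A+A)]=[A]$. I would organize the write-up around these two lemmas-within-the-proof (the count ${\rm tr}(A_T)=4\,{\rm tr}(A)$, and strict monotonicity), then assemble the chain of (in)equalities above.
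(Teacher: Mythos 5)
Your proof is correct and follows essentially the same route as the paper: subdividing each triangle of $T$ by its edge midpoints to conclude ${\rm tr}(A_T)=4\,{\rm tr}(A)$, then comparing $A_T$ with $\frac12(A+A)$ via monotonicity of ${\rm tr}$ over vertex sets with a fixed convex hull. You merely spell out more explicitly the strict-monotonicity case analysis from \eqref{Eulerpoints} and the quantifier over all triangulations $T$, both of which the paper leaves implicit.
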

 
 \begin{proof}  Divide each triangle
$t$ of $T$ into four triangles using the vertices of $t$
and the midpoints of the sides of $t$. This way we have
obtained a triangulation of $[A]=[A_T]$ using $A_T$
as the vertex set. Therefore
$$
{\rm tr}(A+A)={\rm tr}(\mbox{$\frac12$}(A+A))
\geq {\rm tr}(A_T)=4\cdot{\rm tr}(A).
$$
Moreover, there is equality if and only if $A_T=\frac12(A+A)$.\qed
 \end{proof}

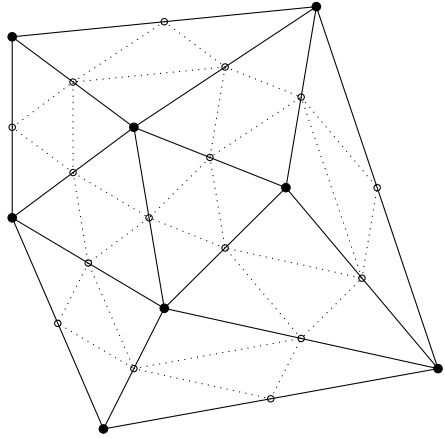
\begin{figure}
\begin{center}
\begin{tikzpicture}[scale=0.4]


\draw (0,7)--(0,13)--(10,14)--(14,2)--(3,0)--(0,7)--(4,10)--(5,4)--(0,7) (0,13)--(4,10)--(10,14)--(9,8)--(14,2)--(5,4)--(9,8)--(4,10) (3,0)--(5,4);

\foreach \i in {(0,7), (0,13), (3,0),  (4,10), (5,4), (9,8), (10,14), (14,2)}
{	
\draw[fill] \i circle(4pt);
}

\foreach \i in {(0,10), (5,13.5), (12,8),(8.5,1),(1.5,3.5),(2,8.5),(4.5,7), (2.5,5.5), (2,11.5), (7,12),(9.5,11),(11.5,5),(9.5,3), (7,6), (6.5,9), (4,2)}
{	
\draw \i circle(3pt);
}

\draw[dotted] (0,10)--(2,11.5)--(2,8.5)--(0,10) (2,8.5)--(4.5,7)--(2.5,5.5)--(2,8.5) (2.5,5.5)--(1.5,3.5)--(4,2)--(2.5,5.5) (4,2)--(9.5,3)--(8.5,1)--(4,2) (9.5,3)--(7,6)--(11.5,5)--(9.5,3) (7,6)--(4.5,7)--(6.5,9)--(7,6) (2,11.5)--(5,13.5)--(7,12)--(2,11.5) (7,12)--(9.5,11)--(6.5,9)--(7,12) (9.5,11)--(12,8)--(11.5,5)--(9.5,11); 

\end{tikzpicture}
\end{center}
\caption{A triangulation and its midpoints.}\label{fig:midpoints}
\end{figure}

We observe that the equation in Lemma \ref{lem:A=B}  is equivalent to Conjecture \ref{ruzsabrunnconj} for the case $A=B$.  Therefore all we have left to prove is that 
${\rm tr}(A+A)=4\cdot{\rm tr}(A)$ if and only if  $A$ is of the form either (a) or (b) in Theorem~\ref{A=B}. The if part is simple.

\begin{lemma} Suppose that either (a) or (b) in Theorem~\ref{A=B} hold for the finite set $A$. Then 
$$
A_T=\frac{1}{2}(A+A).
$$
\end{lemma}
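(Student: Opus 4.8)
The plan is to verify the identity $A_T = \tfrac12(A+A)$ directly in each of the two cases, fixing any triangulation $T$ of $[A]$ and using only the elementary geometric observation that $z \in A_T$ exactly when $z = \tfrac12(a+a')$ with $a,a'\in A$ either equal or joined by an edge of $T$, so the containment $A_T\subseteq\tfrac12(A+A)$ is automatic and only the reverse needs work.

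First I would treat case (a), where $A$ is saturated with respect to its lattice $\Lambda=\Lambda(A)$, i.e.\ $A=[A]\cap\Lambda$. Then $\tfrac12(A+A)\subseteq \tfrac12\Lambda\cap[A]$, and I want to see that every such half-lattice point is a vertex or an edge midpoint of $T$. Take a triangulation $T$ of $[A]$ with vertex set $A$; since $A$ is all of $[A]\cap\Lambda$, every triangle $t$ of $T$ is a \emph{primitive} (unimodular) lattice triangle, i.e.\ it contains no point of $\Lambda$ other than its three vertices (otherwise such an interior/boundary point would be a vertex of $T$ and $t$ would not be a triangle of $T$). For a primitive lattice triangle with vertices $v_1,v_2,v_3$, the only points of $\tfrac12\Lambda$ inside $t$ are exactly $v_1,v_2,v_3$ and the three edge midpoints $\tfrac12(v_i+v_j)$ — this is the standard fact that the six points $\tfrac12(v_i+v_j)$, $1\le i\le j\le 3$, are precisely $(\tfrac12\Lambda)\cap t$ for a unimodular $t$, which one checks by an affine change of coordinates sending $t$ to the standard triangle $[(0,0),(1,0),(0,1)]$. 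Summing over the triangles of $T$ gives $\tfrac12(A+A)\subseteq\tfrac12\Lambda\cap[A] = A_T$, hence equality.

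Next, case (b): $A=\{z_1,\dots,z_k\}$ with $z_1,\dots,z_{k-3}$ in the interior of the triangle $[z_{k-2},z_{k-1},z_k]$, and $z_1,\dots,z_{k-2}$ collinear and equally spaced in that order on a segment. Here I would just describe the essentially unique triangulation $T$: the points $z_1,\dots,z_{k-2}$ all lie on one segment $s$, consecutive ones at equal spacing, and $T$ consists of the fan from $z_{k-1}$ and the fan from $z_k$ over the subsegments $[z_{i+1},z_i]$, together with the triangle $[z_{k-2},z_{k-1},z_k]$ split by $s$. One then checks that $\tfrac12(A+A)$ consists of: the midpoints $\tfrac12(z_i+z_j)$ of pairs among the collinear points — all of which land on $s$ and, by the equal-spacing hypothesis, coincide with points $z_m$ or with midpoints of edges $[z_{i+1},z_i]$ of $T$; the points $\tfrac12(z_i+z_{k-1})$ and $\tfrac12(z_i+z_k)$, which are midpoints of the edges of the two fans; and $\tfrac12(z_{k-1}+z_k)$, $z_{k-1}$, $z_k$, which are a midpoint and two vertices. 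Conversely every edge midpoint of $T$ is of one of these forms, so $A_T=\tfrac12(A+A)$. The one point needing care is that a midpoint $\tfrac12(z_i+z_j)$ of two non-consecutive collinear points is again a lattice point of the arithmetic progression and hence a vertex or sub-edge-midpoint — this is exactly where "equally spaced" is used.

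The main obstacle I anticipate is the bookkeeping in case (b): making precise that the triangulation is forced (so that "for every triangulation $T$" reduces to a single $T$, up to the harmless freedom of how the big triangle minus the fans is triangulated, which does not affect $A_T$), and checking that no spurious element of $\tfrac12(A+A)$ appears off the segment $s$ or outside $[A]$ — in particular that $\tfrac12(z_i+z_{k-1})$ for $z_i$ interior really is an edge midpoint and not some other half-integer point. In case (a) the only subtlety is justifying the claim that each triangle of $T$ is unimodular, which follows immediately from saturation, and recalling the elementary lemma on half-lattice points of a unimodular triangle.
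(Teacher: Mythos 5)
Your proof is correct and takes essentially the same approach as the paper. The paper's own argument is very terse---it normalizes to $\Lambda=\Z^2$ and says ``clearly'' for case (a), and for case (b) simply lists which $[z_i,z_j]$ are edges of the unique triangulation---so your unimodular-triangle observation about half-lattice points and your explicit midpoint bookkeeping are the natural way to substantiate exactly those assertions.
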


\begin{proof} Suppose first that $A=[A]\cap \Lambda$ for a lattice $\Lambda$. We may assume $\Lambda =\Z^2$. Then clearly the midpoints of sides of every triangulation $T$ of $[A]$  using $A$ as vertex set are precisely the points of  $\frac{1}{2}(A+A)$. 

Next, if we have property (b), then there is a unique  triangulation $T$ of $[A]$ using
$A$ as vertex set. For $1\leq i <j\leq k$, $[z_i,z_j]$
is an edge of $T$, unless $j\leq k-2$,
an hence we have $A_T=\frac{1}{2}(A+A)$  again. \qed
\end{proof}

The next Lemma shows the reverse direction and concludes the proof of Theorem~\ref{A=B}.  

\begin{lemma}  
Let $A\subset \R^2$ be a finite  two dimensional set.  If for every triangulation $T$ of $A$ it holds that 
$$
A_T=\frac{1}{2}(A+A),
$$
then either (a) or (b) from Theorem \ref{A=B} hold. 
\end{lemma}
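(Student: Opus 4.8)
The plan is to characterize, by a contrapositive-style structural analysis, exactly which finite two-dimensional sets $A$ fail to be of type (a) or (b). Suppose $A$ is neither saturated nor of the form in (b). We want to produce a triangulation $T$ of $[A]$ with $A_T\subsetneq \frac12(A+A)$; equivalently, a point $\frac12(a+a')$ with $a,a'\in A$ that is \emph{not} a vertex of $T$ and is not the midpoint of an edge of $T$. The natural strategy is to split into cases according to whether $A$ has interior points, and within that to exploit the fact that midpoints of edges of a triangulation are rather constrained: an interior point of $[A]$ has many edges through it, while a point of $\partial[A]$ is on few.

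First I would dispose of the case where $A$ is in convex position, i.e.\ $A=\{z_1,\dots,z_k\}\subset\partial[A]$ with no interior points. Here every triangulation $T$ uses only diagonals, and I would argue that if $A$ is not saturated (it cannot be of type (b) since (b) requires interior points when $k\ge 4$, or reduces to a triangle), then some midpoint $\frac12(z_i+z_j)$ is forced to be missing from $A_T$ for a suitably chosen $T$. Concretely: pick a vertex $v$ of $[A]$; the midpoints $\frac12(v+z)$ for $z\in A$ lie on rays from $v$, and $A_T$ near $v$ consists only of $v$ plus midpoints of the (at most $k-2$) diagonals and two edges incident to $v$ in $T$. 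Counting these against the $|A|-1$ midpoints $\frac12(v+z)$, $z\ne v$, that must all be covered, together with the analogous count at a second well-chosen vertex, should force $A$ to be an affine image of $\{0,1,\dots,n\}\times\{0\}$ glued appropriately — leading to saturation. (This overlaps with Theorem~\ref{convex-position}; indeed I expect to reduce the convex-position subcase essentially to the $1$-dimensional Kemperman equality case applied edge by edge.)

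Next, the main case: $A$ has at least one interior point. Let $\Lambda=\Lambda(A)$; after an affine transformation assume $\Lambda\subseteq\Z^2$. The key observation is that $\frac12(A+A)$ always contains $A$ and the midpoints $\frac12(a+a')$ of \emph{all} pairs, so if $A_T=\frac12(A+A)$ for \emph{every} $T$, the set of midpoints of edges must be independent of the triangulation. I would show this rigidity forces: (i) no point of $A$ lies in the relative interior of a segment $[a,a']$ with $a,a'\in A$ unless that segment is subdivided the same way in every $T$; and (ii) the ``flip'' operation connecting any two triangulations must never create or destroy a midpoint — but a single flip replaces the midpoint of one diagonal by the midpoint of the other, so these two midpoints must coincide, i.e.\ the two diagonals of every flippable quadrilateral in $A$ have a common midpoint, meaning the quadrilateral is a parallelogram with its center in $A$. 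Iterating this parallelogram condition over all empty quadrilaterals, plus using that $[A]$ is connected, is what should pin down $A$ to be $[A]\cap\Lambda$, i.e.\ type (a); the degenerate situations where very few flips are available (the set is ``thin'') are precisely what produces type (b).

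The hard part will be step (ii): controlling flips and empty quadrilaterals to conclude global saturation from the local parallelogram condition, and cleanly separating out the exceptional thin configurations that give case (b) — a set all of whose interior points are collinear and equally spaced on a segment whose endpoints are vertices of $[A]$, with everything inside one triangle. I expect to handle this by induction on $|A|$ or on the number of interior points: remove a vertex or a ``corner'' point of $A$, apply the inductive characterization, and check that re-inserting the point either preserves saturation or forces the rigid collinear structure of (b). A subtle point to watch is that the statement quantifies over \emph{all} triangulations $T$, so to derive a contradiction from $A$ being of neither type it suffices to exhibit \emph{one} bad $T$; this asymmetry should make the argument manageable, since I only ever need to construct a single triangulation missing a prescribed midpoint.
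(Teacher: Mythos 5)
Your plan shares the two structural pillars of the paper's proof: the flip rigidity --- that the two diagonals of every empty convex quadrilateral in $A$ must share a midpoint, i.e.\ form a parallelogram (the paper's Claim~\ref{claim:quadrangle}) --- and an induction on $|A|$ that deletes a vertex of $[A]$, invokes the inductive dichotomy on $A'=A\setminus\{v\}$, and re-inserts $v$. So in spirit it is the same argument. Two things deserve attention, though. First, you never cleanly isolate the other lemma the paper's case analysis leans on, namely that on any line $\ell$ with $A_\ell + A_\ell = (A+A)\cap(\ell+\ell)$, the set $A\cap\ell$ is an arithmetic progression (Claim~\ref{claim:line}). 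The flip/parallelogram observation says nothing about collinear points, and one needs this AP statement not only on the sides of $[A]$ (where you gesture at Kemperman) but also on the interior line in configuration (b), where there are no flips available at all, and repeatedly inside the induction step when $A'$ is thin. Second, your top-level split into ``convex position'' versus ``has interior points'' is an unnecessary complication the paper avoids: the induction handles both uniformly, and the convex-position subcase sketch you give is both vaguer than you'd want (the extremal sets there are arbitrary saturated convex polygons, not affine images of $\{0,\dots,n\}\times\{0\}$ ``glued appropriately'') and risks a circularity if you really lean on Theorem~\ref{convex-position}, which is an independent result. Finally, the alternative idea of deriving global saturation directly by ``iterating the parallelogram condition over all empty quadrilaterals plus connectivity,'' bypassing induction, is the one step I would flag as genuinely shaky --- it is not clear how to propagate a local parallelogram relation across $[A]$ when empty convex quadrilaterals are scarce, which is exactly the thin regime producing (b) --- so you should commit to the induction route, which you also propose and which matches the paper.
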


\begin{proof}    We first prove two simple claims. All throughout we assume that $A_T=\frac{1}{2}(A+A)$ for every triangulation $T$ of $A$.

\begin{claim}\label{claim:line} Let $\ell$ be a line intersecting $A$ in at least two points and $A_{\ell}=A\cap \ell$. If $A_{\ell}+A_{\ell}=(A+A)\cap (\ell+\ell)$ then the points in $A_{\ell}$ form an arithmetic progression. In particular,  the points on each side of the convex hull of $A$ form an arithmetic progression.
\end{claim}

\begin{proof} There is a triangulation $T$ of $A$ which contains the edges defined by consecutive points in $A_{\ell}$.  Since there are $|A_{\ell}|-1$ midpoints of $T$ on $A_{\ell}$,  by the hypothesis of the Lemma and of the Claim,  we have 
$$
|A_{\ell}+A_{\ell}|=|(A+A)\cap (\ell+\ell)|=|A_T\cap \ell|=2|A_{\ell}|-1,
$$
which implies that $A_{\ell}$ consists of an arithmetic progression. \qed
\end{proof}

Call a set of four points of $A$  no three of which collinear an empty quadrangle of $A$ if their convex hull contains no further points of $A$.

\begin{claim}\label{claim:quadrangle} Let $x_1,x_2,x_3,x_4\in A$ form an empty quadrangle of $A$. If they are in convex position  then the four points form a parallelogram. That is, assuming they are listed in clockwise order, we have $x_1+x_3=x_2+x_4$.
\end{claim}

\begin{proof} There are two triangulations of $A$ containing the edges of the convex quadrangle, one of them containing the edge $x_1x_3$ and the other one containing $x_2x_4$. Since $A_T$ cannot depend on the triangulation, the midpoints of these two edges must coincide and therefore $x_1+x_3=x_2+x_4$.\qed \end{proof}

The proof of the Lemma is by induction on $k=|A|$. The Lemma clearly holds if $k=3$. 

Suppose $k=4$. If three of the points are collinear then they are on an edge of the convex hull of $A$ and, by Claim \ref{claim:line},  they form an arithmetic progression. With the fourth one they form a saturated set. If no three of the points are collinear then the four points form an empty quadrangle. If they are in convex position then by Claim \ref{claim:quadrangle} they form a saturated set, otherwise case (b) holds.  

Let $k>4$. Choose a vertex $v$ of the convex hull of $A$ and let $A'=A\setminus \{v\}$. If all points of $A'$ are collinear then by Claim \ref{claim:line} they are in a progresion and, with $v$, they form a saturated set. Suppose that $A'$ is not on o a line. For every triangulation $T'$ of $A$ there is a triangulation $T$ of $A$ containing $T'$. The points in $\frac{1}{2}(A'+A')$ are contained in the convex hull  of $A'$ and, by the condition of the Lemma, coincide with $A'_{T'}$. By induction either (a) or (b) hold for $A'$. We consider the two cases.

{\it Case 1.}   $A'$ is a saturated set. 

{\it Case 1.1.} There is a convex empty quadrangle formed by $v$ and three points of $A'$. Then, by Claim \ref{claim:quadrangle}, $v$ belongs to the lattice generated by $A'$ as well. Moreover, since $A'$ is  convex,  $A$ is also  convex and case (a) holds. 

{\it Case 1.2.} There is no convex empty quadrangle involving $v$ and three points of $A'$. Then it is easily checked that $A'$ has at most one empty convex quadrangle. 

If there is none in $A'$ then, up to an affine transformation, $A'$ consists of the point $(0,1)$ or the two points $(0,\pm 1)$, and the remaining points on the line $y=0$. Then either (i) $v$ belongs to the same line $y=0$, which satisfies the condition of Claim \ref{claim:line}, and all points on that line in $A$ are in arithmetic progression, so that $A$ is a saturated set, or (ii) $A'$ contains only the point $(0,1)$ and $v$ is on the line $x=0$, in which case  Claim \ref{claim:line} yields that the three points of $A$ on that line are in arithmetic progression and $A$ is a saturated set again, or (iii) $A'$ contains only the point $(0,1)$ and $v$  belongs  to none of the two lines containing $A'$ and case (b) holds (see Figure \ref{fig:induction}).

If $A'$ contains one convex empty quadrangle then, up to affinities, $A'$ consists of the four points $(0,0), (1,0), (1,1), (0,1)$ and the remaining ones are on the line $x=y$. Moreover $v$ must belong to the latter line as well and Claim \ref{claim:line} yields that the points on that line are in arithmetic progression and $A$ is a saturated set  (see Figure \ref{fig:induction}). 

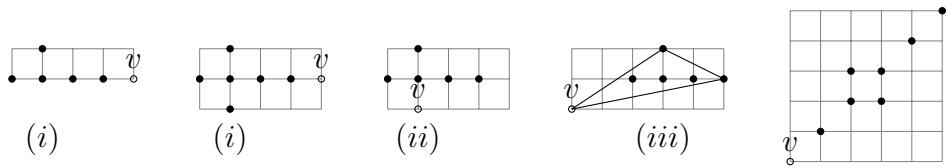
\begin{figure}[h]
\begin{center}
\begin{tikzpicture}[scale=0.4]
\draw[help lines] (-1,0) grid (3,1);
\foreach \i in {(-1,0),(0,0),(1,0),(2,0),(0,1)}
\draw[fill] \i circle(3pt);
\draw (3,0) circle (3pt);
\node[above] at (3,0) {$v$};
\node at (0,-2) {$(i)$};
\end{tikzpicture}
\hspace{3mm}
\begin{tikzpicture}[scale=0.4]
\draw[help lines] (-1,-1) grid (3,1);
\foreach \i in {(-1,0),(0,0),(1,0),(2,0),(0,1), (0,-1)}
\draw[fill] \i circle(3pt);
\draw (3,0) circle (3pt);
\node[above] at (3,0) {$v$};
\node at (0,-2) {$(i)$};

\end{tikzpicture}
\hspace{3mm}
\begin{tikzpicture}[scale=0.4]
\draw[help lines] (-1,-1) grid (3,1);
\foreach \i in {(-1,0),(0,0),(1,0),(2,0),(0,1)}
\draw[fill] \i circle(3pt);
\draw (0,-1) circle (3pt);
\node[above] at (0,-1) {$v$};
\node at (0,-2) {$(ii)$};
\end{tikzpicture}
\hspace{3mm}
\begin{tikzpicture}[scale=0.4]
\draw[help lines] (-3,-1) grid (2,1);
\foreach \i in {(-1,0),(0,0),(1,0),(2,0),(0,1)}
\draw[fill] \i circle(3pt);
\draw (-3,-1) circle (3pt);
\node[above] at (-3,-1) {$v$};
\draw (-3,-1)--(0,1)--(2,0)--(-3,-1);
\node at (0,-2) {$(iii)$};
\end{tikzpicture}
\hspace{3mm}
\begin{tikzpicture}[scale=0.4]
\draw[help lines] (-2,-2) grid (3,3);
\foreach \i in {(0,0),(1,0),(1,1), (0,1), (2,2),(3,3),(-1,-1)}
\draw[fill] \i circle(3pt);
\draw (-2,-2) circle (3pt);
\node[above] at (-2,-2) {$v$};
\end{tikzpicture}
\caption{An illustration of  Case 1.2.}\label{fig:induction}
\end{center}
\end{figure}

{\it Case 2.} $A'$ is as in (b). We may assume that the progression of points of $A'$ lies on the line $x=0$. If $v$ is not on this line then it forms a convex empty quadrangle with two extreme points of the progression and one of the vertices $w$  of the triangle. By Claim \ref{claim:quadrangle}, $v$ must be the point $w+(\pm 1,0)$, which gives a configuration not satisfying the condition of the Lemma. Therefore $v$ lies on the line $x=0$ which satisfies the condition of Claim \ref{claim:line}, so that $v$ belongs to the progression on that line yielding case (b).\qed\end{proof}


\section{Proof of Theorem~\ref{oneextra}}

The inequality between the quadratic and arithmetic means
gives that, if $a,k>0$, then
$$
(4a+2k)^{\frac12}>a^{\frac12}+(a+k)^{\frac12}.
$$
Therefore to prove Theorem~\ref{oneextra}, it is
sufficient the verify the following: Let $B=A\cup\{b\}$
for $b\not\in A$.
\begin{description}
  \item[(*)] If ${\rm tr}(A)=a$ and ${\rm tr}(B)=a+k$,
  then ${\rm tr}(A+B)\geq 4a+2k$.
\end{description}

We fix a triangulation $T$ of $A$, and let $A_T$ be
the union of $A$ and the family of midpoints of the
edges of $T$. It follows by \eqref{Eulerpoints} that
$$
\Delta_{A_T}+2\Omega_{A_T}-2={\rm tr}(A_T)=4a.
$$
To estimate ${\rm tr}(A+B)={\rm tr}(\mbox{$\frac12$}(A+B))$,
we isolate certain subset $V$ of $A$ in a way such that
\begin{equation}
\label{Vcond}
A_T\cap(\mbox{$\frac12$}(V+\{b\}))=\emptyset.  
\end{equation}
Therefore
\begin{eqnarray}
\nonumber
{\rm tr}(A+B)&\geq &
4a+2|\mbox{$\frac12$}(V+\{b\})\cap{\rm int}[B]|+ \\
\label{extrabasic}
&& |\mbox{$\frac12$}(V+\{b\})\cap\partial [B]|+
 |A_T\cap\partial[A]\cap{\rm int}[B]|.
\end{eqnarray}
We distinguish two cases depending on how to define $V$.\\

\noindent{\bf Case 1 } $b\not\in [A]$ 

We say that $x\in [A]$ is visible if $[b,x]\cap[A]=\{x\}$.
In this case $x\in\partial A$. We note that there
are exactly two visible points on $\partial[B]$,
which are on the two supporting
lines to $[A]$ passing through $b$
(see Figure~\ref{fig:case1}). Let $k+1$ be
the number of visible points of $A$, and hence $k\geq 1$.
Now $k-1$ visible points of $A$ lie in ${\rm int}[B]$,
thus \eqref{Eulerpoints} yields that
${\rm tr}(B)= a+k$.
\begin{figure}
\begin{center}
\begin{tikzpicture}[scale=0.4]


\draw (0,7)--(0,13)--(10,14)--(14,2)--(3,0)--(0,7)--(4,10)--(5,4)--(0,7) (0,13)--(4,10)--(10,14)--(9,8)--(14,2)--(5,4)--(9,8)--(4,10) (3,0)--(5,4);

\foreach \i in {(0,7), (0,13), (3,0),  (4,10), (5,4), (9,8), (10,14), (14,2)}
{	
\draw[fill] \i circle(3pt);
}

\foreach \i in {(-3,-3), (0,10), (5,13.5), (12,8),(8.5,1),(1.5,3.5),(2,8.5),(4.5,7), (2.5,5.5), (2,11.5), (7,12),(9.5,11),(11.5,5),(9.5,3), (7,6), (6.5,9), (4,2), (-1.5,2), (-1.5,5), (0,-1.5), (5.5,-0.5)}
{	
\draw \i circle(3pt);
}

\draw[dotted] (0,10)--(2,11.5)--(2,8.5)--(0,10) (2,8.5)--(4.5,7)--(2.5,5.5)--(2,8.5) (2.5,5.5)--(1.5,3.5)--(4,2)--(2.5,5.5) (4,2)--(9.5,3)--(8.5,1)--(4,2) (9.5,3)--(7,6)--(11.5,5)--(9.5,3) (7,6)--(4.5,7)--(6.5,9)--(7,6) (2,11.5)--(5,13.5)--(7,12)--(2,11.5) (7,12)--(9.5,11)--(6.5,9)--(7,12) (9.5,11)--(12,8)--(11.5,5)--(9.5,11) (0,7)--(-3,-3)--(0,13) (3,0)--(-3,-3)--(14,2); 

\node[above] at (-3,-3) {$b$};

\end{tikzpicture}
\end{center}
\caption{An illustration of Case 1.} \label{fig:case1}
\end{figure}
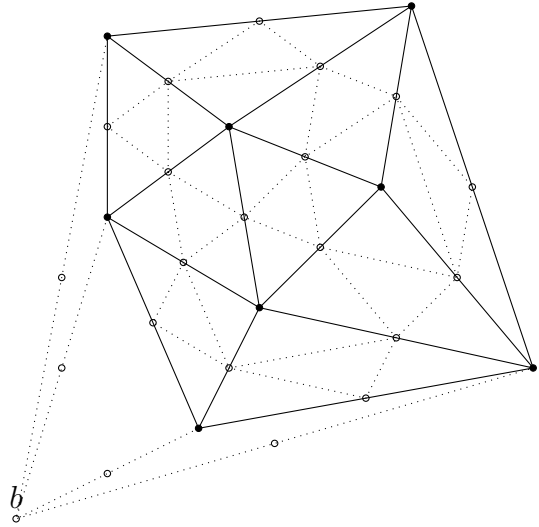
Let $V$ be the set of visible points of $A$.
The condition \eqref{Vcond} is satisfied because
$[A]\cap(\mbox{$\frac12$}(V+\{b\}))=\emptyset$.
We have $|\mbox{$\frac12$}(V+\{b\})|=k+1$,
and $2k-1$ visible points of $A_T$
lie in ${\rm int}[B]$. In particular, $(*)$
follows as \eqref{extrabasic} yields
$$
{\rm tr}(A+B)\geq 4a+2k-1+k+1=4a+3k>4a+2k.
$$

\noindent{\bf Case 2 } $b\in [A]$ 

In this case ${\rm tr}(B)= a+k$ for $k\leq 2$ by
\eqref{Eulerpoints}, and $b$ is contained in 
a triangle $T=[p,q,r]$  of $T$ (see Figure~\ref{fig:case2}).
\begin{figure}
\begin{center}
\begin{tikzpicture}[scale=0.5]


\draw (0,7)--(0,13)--(10,14)--(14,2)--(3,0)--(0,7)--(4,10)--(5,4)--(0,7) (0,13)--(4,10)--(10,14)--(9,8)--(14,2)--(5,4)--(9,8)--(4,10) (3,0)--(5,4);

\foreach \i in {(0,7), (0,13), (3,0),  (4,10), (5,4), (9,8), (10,14), (14,2)}
{	
\draw[fill] \i circle(3pt);
}

\foreach \i in {(0,10), (5,13.5), (12,8),(8.5,1),(1.5,3.5),(2,8.5),(4.5,7), (2.5,5.5), (2,11.5), (7,12),(9.5,11),(11.5,5),(9.5,3), (7,6), (6.5,9), (4,2), (4,12), (7,13), (2,12.5), (4,11)}
{	
\draw \i circle(3pt);
}

\draw[dotted]  (10,14)--(4,12)--(0,13) (4,10)--(4,12);

\node[above] at (4,12) {$b$};
\end{tikzpicture}

\end{center}
\caption{An illustration of Case 2. }\label{fig:case2}
\end{figure}
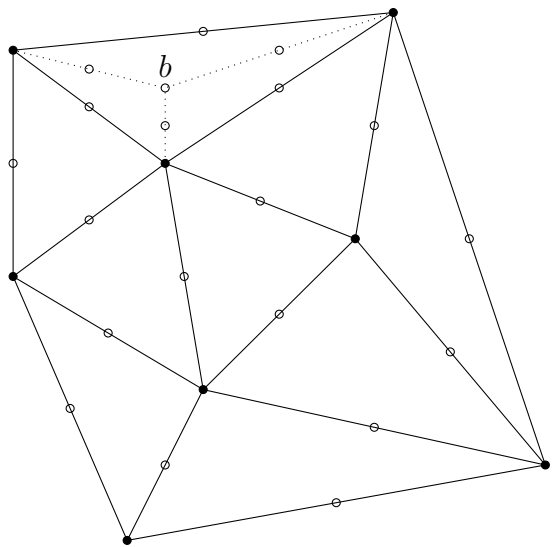
We may assume that $b$ is not contained 
in the sides $[r,p]$ and $[r,q]$ of $T$.
We take $V=\{p,q,r\}$,
which satisfies   \eqref{Vcond}.
Since $\frac12(b+q)\in{\rm int}T\subset{\rm int}[A]$,
\eqref{extrabasic} yields ${\rm tr}(A+B)\geq 4a+4$.
In turn, we conclude Theorem~\ref{oneextra}.\\

\noindent{\bf Remark: } The argument
does not work if we only asssume
that $A\subset B$, because we may
have equality in Conjecture \ref{ruzsabrunnconj} in this case.

\section{Proof of Theorem~\ref{triangle-mixed}}
\label{secmixed-triang}

Let $A\subset \R^2$ be finite and not contained in any line. By a \emph{path} $\sigma$ on $A$ we mean a piecewise linear simple path whose vertices are in $A$, and every point of $A$ in the support of $\sigma$ is a vertex of the path. We write $|\sigma|$ to denote the number of segments forming $\sigma$. We allow the case that $\sigma$ is a point, and in this case $|\sigma|=0$. We say that $\sigma$ is {\it transversal} to a non-zero vector $u$ if every line parallel to $u$ intersects $\sigma$ in at most one point. In this case, the segments in $\sigma$ induce a subdivision of $\sigma+[o,u]$ into $|\sigma|$ parallelograms if $|\sigma|\geq 1$. For the proof of Theorem~\ref{triangle-mixed}  the idea is to find an appropriate set of paths on $A$ with total length at least $\sqrt{|T_A|}$.

First, we explore the possibilities using only one or two paths. We will see in Remark~\ref{onepath} that one path is not enough, but Proposition~\ref{horvert} shows that using two paths $\sigma_1,\sigma_2$ almost does the job.

Observe that for any given non-zero vector $w$, the length of the longest path on $A$ transversal to $w$ equals the number of lines parallel to $w$ intersecting $A$, minus one. 

\begin{rem} 
\label{onepath}
Given pairwise independent vectors $w_1,\ldots,w_n$  let $f(w_1,\ldots,w_n,s)$ be the minimal number such that,
for every finite set $A\subset\R^2$ with ${\rm tr}(A)=s$, there is a $w_i$ and a path on $A$ transversal to $w_i$ of length $f(w_1,\ldots,w_n,s)$. 

For $n=2$,  $f(w_1,w_2,s)\geq\sqrt{s/2}$, with equality   provided that 
$k:=\sqrt{s/2}$ is an integer. An extremal configuration consists of  the points 
$\{iw_1 + jw_2 : i,j\in \{0,\dots,k\} \}$.

For $n=3$,  
$f(w_1,w_2,w_3,s)\geq\sqrt{2s/3}$ and equality holds  provided that $s=6k^2$. 
Assuming without loss of generality that $w_1+w_2+w_3=0$,  an extremal configuration is given by the points of the lattice generated by $w_1,w_2$ in the affine regular hexagon $[\pm kw_1,\pm kw_2,\pm kw_3]$. 
\end{rem}

Let $e_1=(1,0)$ and $e_2=(0,1)$, and let $\sigma_1,\sigma_2$ be piecewise linear paths whose vertices are among the vertices of $A$. We say that the ordered pair 
$(\sigma_1,\sigma_2)$ is a \emph{horizontal-vertical} path if
\begin{description}
\item{(i')} $\sigma_i$ is transversal with respect $e_{3-i}$ (possibly a point), $i=1,2$;
\item{(ii')} the right endpoint $a$ of $\sigma_1$ is the upper endpoint of $\sigma_2$
\item{(iii')} writing $\R_+=\{t\in\R:\, t>0\}$, if $|\sigma_1|,|\sigma_2|>0$, then
$$
\left((\sigma_1\backslash\{a\})+\R_+e_2\right)\cap
\left((\sigma_2\backslash\{a\})+\R_+e_1\right)=\emptyset.
$$
\end{description}
We call $\sigma_1$ the horizontal branch, and $\sigma_2$ the vertical branch,  and $a$ the center.

We observe that if $\sigma'_i$ is the image of $\sigma_i$ by reflection through the line $\R(e_1+e_2)$, then the ordered pair 
$(\sigma'_2,\sigma'_1)$ is also a horizontal-vertical path.

For any polygon $P$ and non-zero vector $u$, we write $F(P,u)$ to denote the face of  $P$ with exterior normal $u$. In particular, $F(P,u)$ is either a side or a vertex.

\begin{prop}
\label{horvert}
For every finite $A\subset\R^2$ not contained in a line, and for every triangulation $T$ of $[A]$ using $A$ as a vertex set, there exists a horizontal-vertical path $(\sigma_1,\sigma_2)$ whose vertices belong to $A$, and satisfies 
$$
|\sigma_1|+|\sigma_2|\geq\sqrt{|T|+1}-\mbox{$\frac12$}.
$$
\end{prop}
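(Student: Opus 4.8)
I would prove Proposition~\ref{horvert} by an incremental/greedy construction of the two branches, tracking a suitable ``potential'' that is controlled by $|T|$. The natural quantity to control is the number $N$ of triangles of $T$ lying in the region swept out by the horizontal-vertical path, i.e.\ between the two branches and the ``staircase'' they determine. Concretely, with $e_1,e_2$ fixed, for each vertex $a\in A$ let me define the \emph{quadrant region} $Q_a$ to be (roughly) the set of points of $[A]$ that lie weakly to the left of $a$ when projected horizontally and weakly below $a$ when projected vertically; a horizontal-vertical path with center $a$ should have the property that $\sigma_1$ is a maximal transversal-to-$e_2$ path inside the part of $[A]$ above the current staircase, and $\sigma_2$ likewise on the right, with the transversality/disjointness condition (iii') guaranteeing that the two branches do not interfere. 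The key is the inequality
$$
|\sigma_1|+|\sigma_2|\;\ge\; (\text{number of lines in one family meeting the relevant region})-1 ,
$$
from the observation recorded just before Remark~\ref{onepath}.

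\textbf{Key steps, in order.}
First I would set up the geometry: pick the center $a$ to be (say) the vertex of $[A]$ minimizing some linear functional, or more cleverly an interior ``balancing'' vertex, so that the region of $[A]$ lying in the closed second quadrant $a+(-\R_{\ge0})\times\R_{\ge0}$ and the region in the closed fourth quadrant $a+\R_{\ge0}\times(-\R_{\ge0})$ together cover a definite fraction of $[A]$; this is the step that forces the ``$\sqrt{\,\cdot\,}$'' to appear, by a product-of-two-widths argument exactly as in the extremal configuration $\{iw_1+jw_2\}$ of Remark~\ref{onepath}. Second, inside the upper-left region I would take $\sigma_1$ to be a longest path transversal to $e_2$ and inside the lower-right region $\sigma_2$ a longest path transversal to $e_1$, both ending at $a$; by the remark before Remark~\ref{onepath}, $|\sigma_1|+1$ is the number of vertical lines meeting the upper-left region and $|\sigma_2|+1$ the number of horizontal lines meeting the lower-right region. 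Third, I would bound $|T|$ from above in terms of these two counts: the crucial combinatorial claim is that the number of triangles of $T$ is at most (number of vertical lines through $A$)$\times$(number of horizontal lines through $A$) — or, more carefully, at most something like $(|\sigma_1|+|\sigma_2|+\tfrac12)^2 - 1$ after the two regions are glued at $a$. Expanding and using $2xy\le x^2+y^2$ in reverse, i.e.\ $(x+y)^2\ge 4xy$, one passes from a product bound $|T|+1\le 4 \cdot(\text{something})$ to the claimed $|\sigma_1|+|\sigma_2|\ge\sqrt{|T|+1}-\tfrac12$. Finally I would check the boundary/degenerate cases ($\sigma_1$ or $\sigma_2$ a single point, $A$ in convex position, etc.) and verify (i'), (ii'), (iii') hold for the constructed pair, using transversality of $\sigma_i$ to rule out the forbidden intersection in (iii').

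\textbf{Main obstacle.} The hard part will be the third step: obtaining a clean upper bound on $|T|=\mathrm{tr}([A]$-region$)$ in terms of the number of grid lines in the two axis directions that actually meet the swept region, \emph{and} arranging the choice of center $a$ so that the two regions jointly account for all (or enough) of $[A]$ while the two branches can be routed to a common endpoint without violating transversality. A longest transversal path only ``sees'' the width in one direction, so a single center will generally under-count; the trick (and the reason the bound is $\sqrt{|T|+1}-\tfrac12$ rather than something sharper) is presumably to let $\sigma_1$ handle the full horizontal extent above a staircase and $\sigma_2$ the full vertical extent to its right, so that every triangle of $T$ is charged to a unique (vertical line, horizontal line) pair with the vertical line crossing $\sigma_1+\R_+e_2$ and the horizontal line crossing $\sigma_2+\R_+e_1$; making this charging injective, and simultaneously compatible with condition (iii'), is the delicate point. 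I expect the proof to proceed by choosing $a$ as an appropriate vertex on the ``anti-diagonal'' boundary of $[A]$ (the part of $\partial[A]$ with outer normal in the open first quadrant), so that $\sigma_1$ runs along the upper-left boundary chain and $\sigma_2$ along the lower-right boundary chain of that region, and then the triangle count reduces to counting lattice-like cells in a monotone staircase region, which is exactly a ``width $\times$ height'' estimate.
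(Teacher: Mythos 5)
Your plan is genuinely different from the paper's, and it has a gap at the step you yourself flag as ``the main obstacle.'' The paper does not try to find a single good center by a width-times-height charging argument. Instead it fixes the family of all horizontal-vertical paths $(\sigma_{1i},\sigma_{2i})$, $i=1,\dots,k$, indexed by the distinct $x$-coordinates present in $A$: $\sigma_{2i}$ is the full vertical slice $A_i=\{p\in A:\langle e_1,p\rangle=\alpha_i\}$ and $\sigma_{1i}$ is the staircase $[a_0,a_1],\dots,[a_{i-1},a_i]$ through the topmost points of the slices $A_0,\dots,A_i$. By construction every member of this family automatically satisfies (i')--(iii') and ends at $a_i$, so no ``routing to a common endpoint'' problem arises. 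The total length of the $i$-th path is exactly $i+x_i-1$; averaging over $i$ and using the Euler relation $2|A|=|T|+\Delta_A+2$ together with the normalization $|F([A],-e_2)\cap A|\ge|F([A],-e_1)\cap A|$ gives $\tfrac{1}{2}\bigl(\tfrac{|T|+\Delta_A'+1}{k}+k\bigr)-\tfrac12$, and AM--GM on the two summands finishes. Thus the $\sqrt{\cdot}$ comes from AM--GM applied to $\tfrac{|T|+\Delta_A'+1}{k}$ and $k$, not from a width-times-height count of triangles.

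Your proposal has two concrete problems. First, the ``crucial combinatorial claim'' that $|T|$ is bounded by (number of vertical lines through $A$) $\times$ (number of horizontal lines through $A$) is false: for the $k\times k$ grid one has $|T|=2(k-1)^2>k\cdot k$ once $k\ge 4$. Your fallback ``more careful'' version, $|T|\le(|\sigma_1|+|\sigma_2|+\tfrac12)^2-1$, is just a restatement of what is to be proved, so it cannot serve as an intermediate lemma. Second, even granting a product bound $|T|+1\le 4|\sigma_1|\,|\sigma_2|$, you would need $\sigma_1$ and $\sigma_2$ to be \emph{longest} transversal paths in their respective quadrants \emph{and} to share the endpoint $a$; a longest transversal path need not pass through any prescribed point, and forcing it to do so weakens the length estimate in a way you do not control. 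The paper sidesteps both issues by working with a canonically defined family (staircase of column-maxima plus a single column), which trivially shares endpoints and whose total lengths have a clean closed form, and then letting the averaging and Euler's formula do the counting that your charging argument is trying to do by hand. If you want to salvage your approach, the missing ingredient is precisely the conversion from $|T|$ to $|A|$ and $\Delta_A$ via (\ref{Eulerpoints}); a purely ``lines meeting a region'' count will not give a bound of the right shape without it.
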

\proof Let us write
\begin{eqnarray*}
\xi&=&|F([A],-e_1)\cap F([A],-e_2)|\leq 1\\
\Delta_A'&=&\left|(A\cap\partial[A])\backslash(F([A],-e_1)\cup F([A],-e_2))\right|.
\end{eqnarray*}

By the invariance with respect to reflection through 
the line $\R(e_1+e_2)$, we may assume that
\begin{equation}
\label{hor-vert-size}
|F([A],-e_2)\cap A|\geq |F([A],-e_1)\cap A|.
\end{equation}
We set $\{\langle e_1,p\rangle:\,p\in A\}=\{\alpha_0,\ldots,\alpha_k\}$ with $\alpha_0<\ldots<\alpha_k$, $k\geq 1$. For 
$i=0,\ldots,k$, let $A_i=\{p\in A:\,\langle e_1,p\rangle=\alpha_i\}$, let $x_i=|A_i|$, and
 let $a_i$ be the top most point of $A_i$; namely, $\langle e_2,a_i\rangle$ is maximal. In particular,
$x_0=|F([A],-e_1)\cap A|$.
 For each $i=1,\ldots,k$, we consider the horizontal-vertical path 
$(\sigma_{1i},\sigma_{2i})$ where
$$
\sigma_{1i}=\{[a_0,a_1],\ldots,[a_{i-1},a_i]\},
$$
and  the vertex set of $\sigma_{2i}$ is $A_i$.
In particular, the total length of the  horizontal-vertical path is 
$(\sigma_{1i},\sigma_{2i})$ is
$$
|\sigma_{1i}|+|\sigma_{2i}|=i+x_i-1.
$$
The average length of these paths for $i=1,\ldots,k$ is
$$
\frac{\sum_{i=1}^k(|\sigma_{1i}|+|\sigma_{2i}|)}{k}=
\frac{\sum_{i=1}^{k}(i+x_i-1)}{k}=
\frac{|A|-x_0}{k}+\frac{k}2-\frac12.
$$
We observe that $2|A|=|T|+\Delta_A+2$, according to \eqref{Eulerpoints}, and (\ref{hor-vert-size}) yields
$$
2+\Delta_A-2x_0=2+\Delta_A'+|F([A],-e_2)\cap A|-\xi-x_0\geq \Delta_A'+1.
$$
Therefore we deduce from the inequality between the arithmetic and geometric mean that
\begin{eqnarray}
\nonumber
\frac{\sum_{i=1}^{k-1}(|\sigma_{1i}|+|\sigma_{2i}|)}{k-1}&=&
\frac{2|A|-2x_0}{2k}+\frac{k}2-\frac12\\
\label{horvert-detailed}
&\geq &
\frac{1}{2}\left(\frac{|T|+\Delta_A'+1}{k}+k\right)-\frac12\\
\label{horvert-detailed-sqrt}
&\geq&
\sqrt{|T|+\Delta_A'+1}-\frac12.
\end{eqnarray}
Therefore there exists some  horizontal-vertical path 
$(\sigma_{1i},\sigma_{2i})$ satisfying (\ref{horvert-detailed-sqrt}).
\proofbox

The estimate of Proposition~\ref{horvert} is close to be optimal according to the following example.

\begin{example}
\label{horvert-example}
Let $k\ge 2$ and $t>0$. Let $A'$ be the saturated set with $[A']$ having vertices $(0,0), (0,k), (k-1,0)$ and $(k-1,1)$, and let $A=A'\cup \{(k+t,0)\}$. A triangulation of $A$ has $k^2+k-1$ triangles and every horizontal--vertical path $(\sigma_1,\sigma_2)$ on $A$  has total length
$$
|\sigma_1|+|\sigma_2|\leq k<\sqrt{|T|+2}-\mbox{$\frac12$}.
$$
\qed\end{example}

We next proceed to the proof of Theorem \ref{triangle-mixed} by a similar strategy using three paths.
Let $B=\{v_1,v_2,v_3\}$ and, for $\{i,j,k\}=\{1,2,3\}$ denote by $u_i$ the exterior unit normal to the side $[v_j,v_k]$ of $B$. A set of three paths $(\sigma_1,\sigma_2,\sigma_3)$ meeting at some point $a\in A$ and using the edges of a triangulation $T$ of $A$ is called a {\it proper star} if  the following conditions hold: 
\begin{description}
\item{(i)} $\sigma_i$ is transversal with respect $v_j-v_k$ (possibly $\sigma_i=\{a\}$);
\item{(ii)} $\sigma_i$ has an end point $b_i\in\partial[A]$ such that $u_i$ is an exterior unit normal to $[A]$ at $b_i$, and
$$
\langle a,u_i\rangle=\min\{\langle x,u_i\rangle:x\in\sigma_i\};
$$
\item{(iii)} writing $\R_+=\{t\in\R:\, t>0\}$, if $|\sigma_j|,|\sigma_k|>0$, then
$$
\left((\sigma_j\backslash\{a\})+\R_+(v_k-v_i)\right)\cap
\left((\sigma_k\backslash\{a\})+\R_+(v_j-v_i)\right)=\emptyset.
$$
\end{description}
If the semi-open paths $\sigma_i\backslash\{a\}$, $i=1,2,3$, are all non-empty and pairwise disjoint, then (iii) means that they come around $a$ in the same order as the orientation of the triangle $[v_1,v_2,v_3]$ (see Figure \ref{fig:tristar} for an illustration).

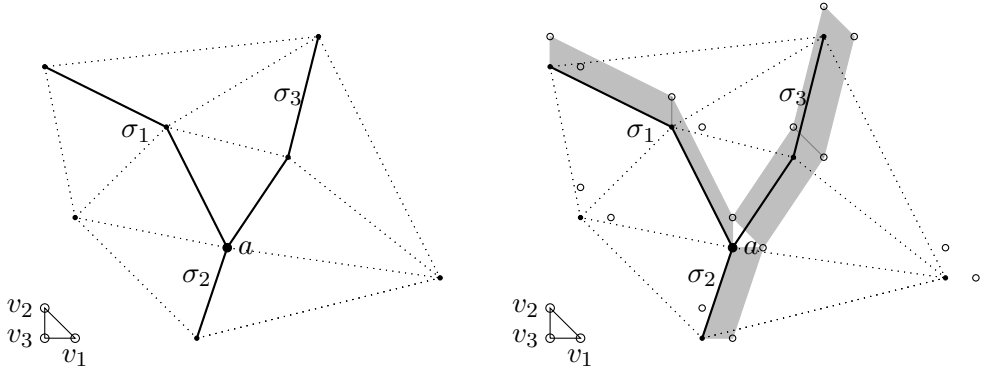
\begin{figure}[h]
	\begin{center}

		\begin{tikzpicture}[scale=0.4]
		
		
		
		\draw (0,0)--(1,0)--(0,1)--(0,0);
		
		\foreach \i in {(0,9), (1,4), (5,-0), (4,7), (6,3), (9,10), (8,6), (13,2)}
		{
			\draw[fill] \i circle(2pt);
		}
		
		\foreach \i in {(0,0), (1,0), (0,1)}
		\draw \i circle (4pt);
		
		\draw[fill] (6,3) circle (4pt);
		\node[below,right] at (6,3) {\small $a$};
		\node[left] at (0,0) {\small $v_3$};
		\node[below] at (1,0) {\small $v_1$};
		\node[left] at (0,1) {\small $v_2$};		
		
		\draw[dotted] (0,9)--(1,4)--(5,-0)--(13,2)--(9,10)--(0,9) (4,7)--(6,3)--(8,6)--(4,7);
		\draw[dotted] (0,9)--(4,7)--(1,4)--(6,3)--(5,-0) (6,3)--(13,2) (5,0)--(13,2)--(8,6)--(9,10)--(4,7);  (4,7)--(6,3)--(8,6)--(4,7);
		
		\draw[dotted] (0,9)--(1,4)--(5,-0)--(13,2)--(9,10)--(0,9) (4,7)--(6,3)--(8,6)--(4,7);
		\draw[dotted] (0,9)--(4,7)--(1,4)--(6,3)--(5,-0) (6,3)--(13,2) (5,0)--(13,2)--(8,6)--(9,10)--(4,7);  (4,7)--(6,3)--(8,6)--(4,7);
		
		\draw[ thick] (0,9)--(4,7)--(6,3)--(5,0) (6,3)--(8,6)--(9,10);
		
		\node at (3,7-0.2) {\small $\sigma_1$};
		\node at (5,2) {\small $\sigma_2$};
		\node at (8,8) {\small $\sigma_3$};
		\end{tikzpicture}
		\hspace{5mm}
				\begin{tikzpicture}[scale=0.4]
		
		
				\draw[fill,lightgray] (0,9)--(0,10)--(4,8)--(4,7)--(0,9);
			\draw[fill,lightgray]  (4,8)--(4,7)--(6,3)--(6,4)--(4,8);
			\draw[fill,lightgray]   (6,3)--(7,3)--(6,0)--(5,0)--(6,3);
			\draw[fill,lightgray]   (6,4)--(7,3)--(9,6)--(8,7)--(6,4);
			\draw[fill,lightgray]    (9,6)--(8,7)--(9,11)--(10,10)--(9,6);
			\draw[gray] (4,7)--(4,8) (8,7)--(9,6);
		
		\draw (0,0)--(1,0)--(0,1)--(0,0);
		
		\foreach \i in {(0,9), (1,4), (5,-0), (4,7), (6,3), (9,10), (8,6), (13,2)}
		{
			\draw \i+(0,1) circle (3pt);
			\draw \i+(1,0) circle (3pt);
			\draw[fill] \i circle(2pt);
		}
		
		\foreach \i in {(0,0), (1,0), (0,1)}
		\draw \i circle (4pt);
		
		\draw[fill] (6,3) circle (4pt);
		\node[below,right] at (6,3) {\small $a$};
		\node[left] at (0,0) {\small $v_3$};
		\node[below] at (1,0) {\small $v_1$};
		\node[left] at (0,1) {\small $v_2$};		
		
		\draw[dotted] (0,9)--(1,4)--(5,-0)--(13,2)--(9,10)--(0,9) (4,7)--(6,3)--(8,6)--(4,7);
		\draw[dotted] (0,9)--(4,7)--(1,4)--(6,3)--(5,-0) (6,3)--(13,2) (5,0)--(13,2)--(8,6)--(9,10)--(4,7);  (4,7)--(6,3)--(8,6)--(4,7);
		
		\draw[dotted] (0,9)--(1,4)--(5,-0)--(13,2)--(9,10)--(0,9) (4,7)--(6,3)--(8,6)--(4,7);
		\draw[dotted] (0,9)--(4,7)--(1,4)--(6,3)--(5,-0) (6,3)--(13,2) (5,0)--(13,2)--(8,6)--(9,10)--(4,7);  (4,7)--(6,3)--(8,6)--(4,7);
		
		\draw[ thick] (0,9)--(4,7)--(6,3)--(5,0) (6,3)--(8,6)--(9,10);
		
		\node at (3,7-0.2) {\small $\sigma_1$};
		\node at (5,2) {\small $\sigma_2$};
		\node at (8,8) {\small $\sigma_3$};
		\end{tikzpicture}
		
	\end{center}
\caption{A proper star  with respect to $v_1,v_2,v_3$ centered at $a$. On the right, paralellograms based on the proper star}
\label{fig:tristar}
\end{figure}

 The next Lemma shows  how to construct an appropriate  mixed subdivision of $A+B$ using a proper star. 

\begin{lemma}
\label{proper-mixed}
Given a proper star with rays $\sigma_1,\sigma_2,\sigma_3$ such that $|\sigma_1|+|\sigma_2|+|\sigma_3|>0$, there exists a mixed subdivision $M$ for $A+B$ satisfying
$$
M_{11}=|\sigma_1|+|\sigma_2|+|\sigma_3|.
$$
\end{lemma}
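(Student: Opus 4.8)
The plan is to write $M$ down explicitly from the proper star and then verify that it is a mixed subdivision. Since $|B|=3$, the only triangulation of $[B]$ is the single triangle $[v_1,v_2,v_3]$, so a mixed subdivision corresponding to the pair $(T,\{[v_1,v_2,v_3]\})$ (where $T=T_A$ is the triangulation on which the star is built) has exactly three kinds of $2$-cell: translates $t+v_i$ of triangles of $T$, parallelograms $e+[v_j,v_k]$ with $e$ an edge of $T$, and one translate $a'+[v_1,v_2,v_3]$ of $[B]$. First I would unwind the geometry of the star. Condition (i) says that $\langle\cdot,u_i\rangle$ is strictly monotone along $\sigma_i$ (the lines parallel to $v_j-v_k$ are precisely the level sets of $\langle\cdot,u_i\rangle$, since $u_i\perp[v_j,v_k]$), and (ii) says that $\sigma_i$ runs from $a$, where $\langle\cdot,u_i\rangle$ is minimal on $\sigma_i$, out to a point $b_i$ that maximizes $\langle\cdot,u_i\rangle$ over all of $[A]$. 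Because the $\sigma_i$ consist of edges of $T$ and, by (iii), do not meet away from $a$, they cut $[A]$ into closed regions $R_1,R_2,R_3$, where $R_i$ is bounded by $\sigma_j$, $\sigma_k$ and the arc of $\partial[A]$ from $b_j$ to $b_k$ avoiding $b_i$ (with the obvious degeneration when $a\in\partial[A]$, which may split one region into two or collapse one $\sigma_i$ to $\{a\}$). Every open triangle of $T$ lies in exactly one $R_i$, and $\sigma_i=R_j\cap R_k$.

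Next I define $M$ to consist of: the triangle $a+[v_1,v_2,v_3]$; for each $i$ and each edge $e$ of $T$ with $e\subseteq\sigma_i$, the parallelogram $e+[v_j,v_k]$, which is non-degenerate because $e$ is not parallel to $v_j-v_k$ by transversality; and for each $i$ and each triangle $t$ of $T$ with $t\subseteq R_i$, the triangle $t+v_i$. Every member is visibly of the form $F+G$ with $F\in T$, $G\in\{[v_1,v_2,v_3]\}$ and $\dim F+\dim G=2$, and all its vertices lie in $A+B$. The matching condition in the definition of a mixed subdivision holds: $a$ is the unique element of $A$ with $a+[v_1,v_2,v_3]\in M$, and for a triangle $t$ of $T$ the unique $b\in B$ with $t+b\in M$ is $v_i$, where $R_i$ is the region containing $t$. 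Finally $|M_{11}|$ is just the number of parallelograms, namely $\sum_{i=1}^{3}\#\{e\in T:e\subseteq\sigma_i\}=|\sigma_1|+|\sigma_2|+|\sigma_3|$, which is the asserted equality.

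The main obstacle is to check that $M$ is a genuine polyhedral subdivision of $[A+B]=[A]+[B]$: that its members cover $[A+B]$, have pairwise disjoint interiors, and meet face to face. In the plane this reduces to three local facts. Along a ray $\sigma_i$, the strip $\sigma_i+[v_j,v_k]$ is a union of $|\sigma_i|$ non-overlapping parallelograms (since $\sigma_i$ is monotone in the $u_i$-direction while $[v_j,v_k]\perp u_i$) whose two long sides are $\sigma_i+v_j$ and $\sigma_i+v_k$, and these glue it cleanly between $R_j+v_j$ and $R_k+v_k$ — this is exactly where hypothesis (iii) is used, to know that it is $v_j$, not $v_k$, that belongs on the $R_j$ side. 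Around the centre, the three strips together with $a+[v_1,v_2,v_3]$ fit around each common vertex $a+v_i$ with angles summing to $2\pi$, again by the cyclic-order part of (iii). And on the boundary, the outer sides $b_i+[v_j,v_k]=b_i+F([B],u_i)$ of the extreme parallelograms, together with the translates $(\Gamma_i)+v_i$ of the arcs $\Gamma_i$ of $\partial[A]$ bounding the $R_i$, trace out exactly $\partial([A]+[B])$, using $F([A]+[B],u)=F([A],u)+F([B],u)$. Disjointness of the interiors then follows from these together with an area count, and face-to-faceness is a routine vertex-by-vertex check once the above is in place.

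An alternative and perhaps cleaner route is through the Cayley trick (see \cite{LRF10}): the desired mixed subdivision corresponds to a triangulation of the Cayley polytope $\mathrm{conv}\big((A\times\{0\})\cup(\{v_1,v_2,v_3\}\times\{1\})\big)\subset\R^3$, under which the cells $a+[v_1,v_2,v_3]$, $e+[v_j,v_k]$ and $t+v_i$ become the tetrahedra $\mathrm{conv}(\{(a,0)\}\cup(B\times\{1\}))$, $\mathrm{conv}(\{(x,0),(y,0),(v_j,1),(v_k,1)\})$ (for $e=[x,y]$) and $\mathrm{conv}((t\times\{0\})\cup\{(v_i,1)\})$. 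One then checks that these tetrahedra form a triangulation of the Cayley polytope and that those using two "top" vertices are in bijection with the edges lying on the star, which again gives $|M_{11}|=|\sigma_1|+|\sigma_2|+|\sigma_3|$.
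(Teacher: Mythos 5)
Your proposal is correct and follows essentially the same construction as the paper: cut $[A]$ by the three rays into regions $R_1,R_2,R_3$, translate the triangles in $R_i$ by $v_i$, place a parallelogram $e+[v_j,v_k]$ over each edge $e\subseteq\sigma_i$, and put the single copy $a+[v_1,v_2,v_3]$ at the center, giving $|M_{11}|=|\sigma_1|+|\sigma_2|+|\sigma_3|$. The paper formalizes the assignment of a triangle to a region by a ray-crossing parity condition and otherwise leaves the subdivision check implicit, while you sketch the gluing verification a bit more explicitly (and note the Cayley-trick reformulation), but the underlying argument is the same.
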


\proof  We may assume that $|\sigma_1|>0$ and $v_3=o$.
We partition the triangles of $T_A$ into three subsets $\Sigma_1,\Sigma_2,\Sigma_3$ (some of them might be empty). The idea is that if the semi-open paths $\sigma_i\backslash\{a\}$, $i=1,2,3$, are all non-empty and pairwise disjoint and $\{i,j,k\}=\{1,2,3\}$, then $\Sigma_i$ consists of the triangles cut off by $\sigma_j\cup\sigma_k$.

 A triangle $\tau$ of $T_A$ is in $\Sigma_1$ if and only if there exists a  $p\in({\rm int}\,\tau)\backslash(a+\R v_1)$ such that
$$
|(p-\R_+v_1)\cap\sigma_2|+|(p-\R_+v_1)\cap\sigma_3|
$$ 
is finite and odd. Similarly, $\tau\in T_A$ is in $\Sigma_2$ if and only if there exists a  $p\in{\rm int}\,\tau$ such that
$$
|(p-\R_+v_2)\cap\sigma_1|+|(p-\R_+v_2)\cap\sigma_3|
$$   
is finite and odd. The rest of the triangles of $T_A$ form $\Sigma_3$.

The triangles of the mixed subdivision $M$ are as follows. If $\tau\in\Sigma_i$, then the corresponding triangle in $M$ is $\tau+v_i$. In addition, $[B]+a$ is in $M$. For the parallelograms, let $\{i,j,k\}=\{1,2,3\}$. If $e$ is an edge of $\sigma_i$, then $e+[v_j,v_k]$ is in $M$. \proofbox

For the rest of the section, we fix finite $A\subset\R^2$ and $B=\{v_1,v_2,v_3\}\subset\R^2$ such that both of them spans $\R^2$ affinely,  and confirm Conjecture~\ref{ruzsabrunnconj-mixed} in this case. 

The following statement is a simple consequence of the definition of a proper star.

\begin{lemma}
\label{improve-hor-vert}
Assuming $B=\{v_1,v_2,v_3\}$ with $v_1=(1,0)=-u_1$, $v_2=(0,1)=-u_2$ and $v_3=(0,0)$, and hence $u_3=(\frac1{\sqrt{2}},\frac1{\sqrt{2}})$, if $(\sigma_1,\sigma_2)$ is a
 horizontal-vertical path for $A$ centered at $a\in A$, then 
\begin{itemize}
\item there exists  a proper star  $(\sigma'_1,\sigma'_2,\sigma'_3)$ centered at $a$ such that  $\sigma_1\subset\sigma'_1$, 
$\sigma_2\subset \sigma'_2$,
\item  if in addition $a\not\in F([A],u_3)$, then $|\sigma'_3|\geq 1$. 
\end{itemize}
\end{lemma}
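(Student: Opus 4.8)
\textbf{Proof plan for Lemma~\ref{improve-hor-vert}.}

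The plan is to take the horizontal-vertical path $(\sigma_1,\sigma_2)$ and extend each branch along edges of $T$ until it reaches the boundary of $[A]$ in the direction forced by the normals $u_1=(-1,0)$ and $u_2=(0,-1)$, then to build a third branch $\sigma'_3$ that runs ``down and to the left'' in the direction $-u_3=(-\frac1{\sqrt2},-\frac1{\sqrt2})$. Concretely, $\sigma'_1$ will be $\sigma_1$ together with further edges of $T$ continued from the left endpoint of $\sigma_1$, each new edge chosen to strictly decrease the $e_1$-coordinate, until we hit $F([A],-e_1)$; this is possible because from any vertex of $T$ that is not on $F([A],-e_1)$ there is at least one triangle of $T$ incident to it lying (locally) to the left, hence an edge of $T$ going strictly leftward, and the process terminates since $A$ is finite. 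Transversality of $\sigma'_1$ to $v_2-v_3=e_2$ is automatic since every edge strictly decreases the first coordinate. Symmetrically we extend $\sigma_2$ downward to $F([A],-e_2)$ to obtain $\sigma'_2$, transversal to $v_1-v_3=e_1$. Condition (ii) of a proper star then holds for $i=1,2$ because $a$ realizes the minimum of the first (resp.\ second) coordinate along the original branch and the extensions only decrease it further, so $a$ is never the minimizer unless the branch was already a point — and in that degenerate case $a\in F([A],u_i)$ and one takes $b_i=a$.

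For the third branch I would start at $a$ and repeatedly follow an edge of $T$ that strictly decreases the linear functional $\langle\cdot,u_3\rangle=\frac1{\sqrt2}(x_1+x_2)$, again possible from any vertex not minimizing this functional on $A$, i.e.\ not on $F([A],u_3)$; we stop when we reach $F([A],u_3)$, and set $b_3$ to be that endpoint, with $u_3$ an exterior normal to $[A]$ there. Transversality of $\sigma'_3$ to $v_1-v_2=(1,-1)$ has to be checked: a line parallel to $(1,-1)$ is a level set of $x_1+x_2$, and since every edge of $\sigma'_3$ strictly changes $x_1+x_2$, no such line can meet $\sigma'_3$ in two points, so (i) holds for $i=3$. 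For the second bullet: if $a\notin F([A],u_3)$ then $a$ is not a minimizer of $x_1+x_2$ on $A$, so the construction produces at least one edge, giving $|\sigma'_3|\ge 1$.

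The main work — and the step I expect to be the real obstacle — is verifying the non-crossing condition (iii) for all three pairs simultaneously; that is, that the semi-open branches $\sigma'_i\setminus\{a\}$, suitably translated in the prescribed cone directions $v_k-v_i$, stay pairwise disjoint. For the pair $(\sigma'_1,\sigma'_2)$ this is essentially the hypothesis (iii') on the original horizontal-vertical path together with the observation that the extensions stay in the ``lower-left'' quadrant region they already occupied; one checks that $(\sigma'_1\setminus\{a\})+\R_+e_2$ lives weakly to the left of the vertical line through $a$ while $(\sigma'_2\setminus\{a\})+\R_+e_1$ lives weakly below the horizontal line through $a$, and the only possible overlap is controlled by (iii'). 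For the pairs involving $\sigma'_3$ I would argue using the three open half-planes / sectors at $a$ bounded by the lines $a+\R e_1$, $a+\R e_2$, $a+\R(e_1-e_2)$: because the three functionals $\langle\cdot,u_1\rangle,\langle\cdot,u_2\rangle,\langle\cdot,u_3\rangle$ each strictly decrease along their own branch away from $a$, branch $\sigma'_i\setminus\{a\}$ is trapped in the open sector opposite to $v_i$ (this is where one must be a little careful, since the extended branches could in principle wander), and then the Minkowski sums with $\R_+(v_k-v_i)$ — each a ray pointing into that same sector — keep the translated branches in disjoint sectors. A clean way to make this rigorous is to note that a proper star is exactly the configuration Lemma~\ref{proper-mixed} needs to produce a valid mixed subdivision, so it suffices to exhibit the partition $\Sigma_1,\Sigma_2,\Sigma_3$ of the triangles of $T$ directly and check it is well-defined; once $\sigma'_1,\sigma'_2,\sigma'_3$ reach the boundary with the correct normals, the parity definition in Lemma~\ref{proper-mixed} assigns every triangle unambiguously, which is equivalent to (iii).
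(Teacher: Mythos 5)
The paper offers no proof of this lemma (it is asserted to be a simple consequence of the definitions), so there is nothing to compare against directly; but your construction of the third branch runs in the wrong direction, and this is not a cosmetic slip. You build $\sigma'_3$ by following edges that \emph{decrease} $\langle\cdot,u_3\rangle=\frac1{\sqrt2}(x_1+x_2)$, heading ``down and to the left'' toward $-u_3$, and you describe $F([A],u_3)$ as the face where this functional is minimized. In fact $F([A],u_3)$ is by definition the face of $[A]$ with exterior normal $u_3$, i.e.\ the set of \emph{maximizers} of $\langle\cdot,u_3\rangle$. Condition (ii) of a proper star demands both that $b_3\in F([A],u_3)$ and that $a$ be the minimizer of $\langle\cdot,u_3\rangle$ along $\sigma'_3$, so $\sigma'_3$ must head \emph{northeast} from $a$, up the functional, ending on $F([A],u_3)$; Figure~\ref{fig:tristar} shows exactly this, with $\sigma_3$ running from $a=(6,3)$ through $(8,6)$ to $(9,10)$. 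With your southwestward branch $b_3$ lands on $F([A],-u_3)$, where $-u_3$ rather than $u_3$ is the outer normal, and $a$ becomes the maximizer instead of the minimizer of $\langle\cdot,u_3\rangle$ on $\sigma'_3$, so (ii) fails twice. Your prose also inverts the signs for $i=1,2$: you write that the functionals ``strictly decrease away from $a$'' and that $a$ realizes the coordinate minimum on $\sigma_i$, whereas along $\sigma'_1$ heading west $\langle\cdot,u_1\rangle=-x_1$ \emph{increases} and $a$ has the largest first coordinate. The westward/southward extensions you actually describe for $\sigma'_1,\sigma'_2$ are nevertheless correct; the error in $\sigma'_3$ is the substantive one.

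On condition (iii), your ``clean way'' — exhibit the partition $\Sigma_1,\Sigma_2,\Sigma_3$ from Lemma~\ref{proper-mixed} and check it is well-defined — reverses that lemma's logic: Lemma~\ref{proper-mixed} takes a proper star, in particular (iii), as its hypothesis in order to make the parity-based partition well-defined, so proving well-definedness of that partition cannot be used as a substitute for verifying (iii). The sector/half-plane sketch preceding it also needs to be made explicit (with $\sigma'_3$ pointed the right way): the three open half-planes $\{\langle x,u_i\rangle>\langle a,u_i\rangle\}$ overlap pairwise, so membership in them alone does not separate the translated branches, and one must invoke the transversality of each $\sigma'_i$ together with the freedom to pick the triangulation $T$ so that the three branches emanate from $a$ in the correct cyclic order.
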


\noindent{\bf Proof of Theorem~\ref{triangle-mixed} }
We may assume that $B=\{v_1,v_2,v_3\}$ with $v_1=(1,0)=-u_1$, $v_2=(0,1)=-u_2$ and $v_3=(0,0)$, and hence $u_3=(\frac1{\sqrt{2}},\frac1{\sqrt{2}})$. In addition, we may assume that
$$
|F([A],-u_2)\cap A|\geq |F([A],-u_1)\cap A|.
$$
Using the notation of the proof of (\ref{horvert-detailed}), we set 
$\{\langle u_1,p\rangle:\,p\in A\}=\{\alpha_0,\ldots,\alpha_k\}$ with $\alpha_0<\ldots<\alpha_k$,
and $\Delta_A'=|(A\cap \partial[A])\backslash(F([A],-u_1)\cup F([A],-u_2))|$. For 
$i=0,\ldots,k$, let $A_i=\{p\in A:\,\langle u_1,p\rangle=\alpha_i\}$, let $x_i=|A_i|$ and
 let $a_i$ be the top most point of $A_i$; namely, $\langle u_2,a_i\rangle$ is maximal.
According to (\ref{horvert-detailed}) and  (\ref{horvert-detailed-sqrt}), we have
\begin{equation}
\label{horvert-detailedaverage}
\frac{\sum_{i=1}^{k}(i+x_i-1)}{k}\geq
\frac{|T_A|+\Delta_A'+1}{2k}+\frac{k}2-\frac12
\geq \sqrt{|T_A|+1}-\frac12.
\end{equation}
Let $I$ be the set of all $i\in \{1,\ldots,k\}$ such that
\begin{equation}
\label{ixilower}
i+x_i-1\geq \left\lceil \frac{|T_A|+\Delta_A'+1}{2k}+\frac{k}2-\frac12\right\rceil=\xi.
\end{equation}
Since  $\xi\geq \sqrt{|T_A|+1}-\frac12$,
if strict inequality holds  for some $i$ in (\ref{ixilower}), then we have a required proper star by
 Lemma~\ref{improve-hor-vert}. Thus we assume that $i+x_i-1=\xi$ for $i\in I$.

Let $\theta=|I|$. Since $i+x_i-1\leq \xi-1$ if $i\not\in I$, we have
$$
\xi-\frac{\sum_{i=1}^{k}(i+x_i-1)}{k}\geq \frac{k-\theta}{k}.
$$
We deduce from (\ref{horvert-detailedaverage}) that if $i\in I$, then
$$
i+x_i-1\geq\frac{|T_A|+\Delta_A'+1}{2k}+\frac{k}2-\frac12+\frac{k-\theta}{k}=
\frac{|T_A|+\Delta_A'+1}{2k}+\frac{k}2+\frac12-\frac{\theta}{k}.
$$
If $i\in I$ and $a_i\not\in F([A],u_3)$, then $\xi\geq \sqrt{|T_A|+1}-\frac12$ and  Lemma~\ref{improve-hor-vert} yields the
existence of a required proper star. Therefore we may assume that 
$a_i\in F([A],u_3)$ for $i\in I$. Since  $|F([A],u_3)\cap F([A],-u_2))|\leq 1$, we deduce that
\begin{equation}
\label{tminusepsilonthetak}
\theta\leq \max\{\Delta_A'+1,k\}.
\end{equation}
Therefore if  $i\in I$, then we conclude using the inequality  betwen the arightmetic and the geometric mean at the last inequality that
$$
i+x_i-1\geq \frac{|T_A|+\theta}{2k}+\frac{k}2+\frac12-\frac{\theta}{k}\geq 
\frac{|T_A|}{2k}+\frac{k}2+\frac12-\frac{\theta}{2k}\geq\sqrt{|T_A|}. \mbox{ \ }\proofbox
$$

\section{Proof of Theorem~\ref{convex-position}}
\label{secconvex-position}

We assume in this section that there are no points of $A$ (resp. $B$) in the interior of $[A]$, (resp. $[B]$).

Recall that $\Delta_X$ denotes the number of points of $X$ in the boundary of $[X]$.  
It is easy to check that $\Delta_{A+B}$ has at least as many points
as $\Delta_A$ and $\Delta_B$ together, that is:
\[
\Delta_{A+B} \ge \Delta_A  + \Delta_B = {\rm tr}(A) + {\rm tr}(B) +4
\]
As a motivation for the proof, we note that Conjecture~\ref{ruzsabrunnconj} follows if  the number $\Omega_{A+B}$ of  points of $A+B$
in ${\rm int}([A+B])$ is at least
\[
\frac{{\rm tr}(A) + {\rm tr}(B) - 2}2 = \frac{\Delta_A + \Delta_B} 2  - 3 .
\]
Naturally we aim at the stronger Conjecture~\ref{ruzsabrunnconj-mixed}.
Given Theorem~\ref{triangle-mixed}, Theorem~\ref{convex-position}
follows if $A$ and $B$ being in convex position and $|A|,|B|\geq 4$ yield that
there exists a mixed subdivision of $A + B$ satisfying
\begin{equation}
\label{convex-position-parallelograms}
|M_{11}|\geq \frac{{\rm tr}(A)+{\rm tr}(B)}2.
\end{equation}

Throughout the proof we  assume that $[B]$ has at most as many vertices as $[A]$ and $v$ denotes  a unit vector (which we assume pointing upwards) not parallel to any side of $[A+B]$. We denote by  $a_0$ and $a_1$  the leftmost and rightmost vertex of $[A]$ and by $b_0$ and $b_1$   the leftmost and rightmost vertex of $[B]$.

To prove (\ref{convex-position-parallelograms}),
we say that $A$ and $B$ form a \emph{strange pair}  if   $[B]$ is a triangle and the three exterior normals to $[B]$ are also exterior normals of edges of $[A]$.

We will use that, for   $t,s\geq 1$,
\begin{equation}
\label{tsbig}
ts\geq t+s-1.
\end{equation}

\noindent {\bf Case 1 } $A$ and $B$ are not strange pairs. 

We choose a unit vector $v$ as above   in the following way: if
$B$ is a triangle, then the upper arc of $\partial [B]$ is a side such that $[A]$ has no side with same exterior unit normal; if $[B]$ has at least four sides, then
the two  supporting lines of $[B]$ parallel to $v$ touch at non-consecutive vertices of $[B]$. For the existence of the latter pair of supporting lines, we note that while continuously rotating $[B]$, the number of ��upper - lower vertices�� changes by either zero or two units at a time when a side of $[B]$ is parallel to $v$, and  after rotation by $\pi$ it changes to its opposite. Hence, at some position that difference is zero or one which implies, since $[B]$ has at least four vertices, that at that position there is at least one upper and one lower vertex, as required. 

\begin{claim} One of the two following statements hold:
\begin{equation}
\label{convex-position-case1}
\begin{array}{rl}
&\left|\Big((A + b_0) \cup (a_1 + B)\Big)\cap{\rm int}[A+B]\right|\geq \frac{\Delta_A + \Delta_B} 2  - 3,  \mbox{ or } \\[3mm]
&\left|\Big((a_0 + B) \cup (A + b_1)\Big)\cap{\rm int}[A+B]\right|\geq \frac{\Delta_A + \Delta_B} 2  - 3.
\end{array}
\end{equation}
\end{claim}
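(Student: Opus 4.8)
The plan is to prove the Claim by a counting/averaging argument over the boundary points, exploiting the fact that the four ``corner sums'' $A+b_0$, $a_1+B$, $a_0+B$, $A+b_1$ together account, via a suitable bookkeeping, for all of $\Delta_A+\Delta_B$ boundary points of $[A]$ and $[B]$, and that most of these sums land in the interior of $[A+B]$. Concretely, split $\partial[A]$ into its upper arc (from $a_0$ to $a_1$ going over the top) and its lower arc, and similarly split $\partial[B]$; write $\Delta_A^{+},\Delta_A^{-}$ (resp.\ $\Delta_B^{+},\Delta_B^{-}$) for the numbers of points of $A$ (resp.\ $B$) on these arcs, counting the two extreme vertices $a_0,a_1$ (resp.\ $b_0,b_1$) in both, so that $\Delta_A^{+}+\Delta_A^{-}=\Delta_A+2$ and likewise for $B$. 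The geometric input is: if $p$ is a point of $A$ on the upper arc and $p\neq a_1$, then $p+b_0\in{\rm int}[A+B]$, because $b_0$ is the leftmost vertex of $[B]$ and so lies strictly ``below and left'' of the upper arc of $[A+B]$ near $a_0$; symmetric statements hold for the other three combinations. The choice of $v$ in Case~1 (so that $[B]$ has both an upper and a lower vertex distinct from $b_0,b_1$, or its upper side has a normal not shared by $[A]$) is exactly what guarantees that $a_1+B$ also contributes genuinely interior points and that no cancellation with the ``strange pair'' degeneracy occurs.

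First I would make precise the four containment lemmas of the above type, being careful about which extreme vertices must be excluded (typically $a_0,a_1,b_0,b_1$ and the sums among them that sit on $\partial[A+B]$), and count how many interior points each of $A+b_0$, $a_1+B$, $a_0+B$, $A+b_1$ is thereby guaranteed to produce: roughly $\Delta_A^{+}-c$, $\Delta_B^{+}-c$, $\Delta_B^{-}-c$, $\Delta_A^{-}-c$ respectively for a small absolute constant $c$ (coming from the $O(1)$ shared vertices). Then $(A+b_0)\cup(a_1+B)$ gives at least $\Delta_A^{+}+\Delta_B^{+}-O(1)$ interior points and $(a_0+B)\cup(A+b_1)$ gives at least $\Delta_A^{-}+\Delta_B^{-}-O(1)$; these two quantities sum to $\Delta_A+\Delta_B+4-O(1)$, so at least one of them is $\ge \tfrac{\Delta_A+\Delta_B}{2}-3$ provided the constants work out, which is precisely \eqref{convex-position-case1}. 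The averaging step is where the constant $3$ has to be matched exactly, so I would track the $O(1)$ terms honestly rather than absorbing them.

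The subtle point, and the main obstacle, is showing that the two unions really are disjoint (or near-disjoint) enough that $|(A+b_0)\cup(a_1+B)|$ is close to $|A+b_0|+|a_1+B|$ minus only the overlap at shared vertices like $a_1+b_0$; a priori a point of $A$ on the upper arc translated by $b_0$ could coincide with a point of $a_1+B$, and one must rule this out (or bound the coincidences) using convex position and the genericity of $v$. Here the hypothesis that $A$ and $B$ are \emph{not} a strange pair, together with the specific construction of $v$ from the ``rotating $[B]$'' argument already given in the text, should be used to ensure the upper arcs of $[A]$ and $[B]$ are ``transversal'' in the relevant sense, so that $A+b_0$ and $a_1+B$ meet $\partial[A+B]$ and each other only in the few forced places. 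Once disjointness up to $O(1)$ is established, the remaining argument is the elementary averaging above; I would then record that the complementary Case~2 (strange pairs) needs the separate treatment using \eqref{tsbig}, which is presumably handled right after this Claim. I expect the write-up to spend most of its length on the geometry of which translated boundary points are interior, and comparatively little on the counting.
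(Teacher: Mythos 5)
Your key geometric claim is false, and it is the load-bearing step. You assert that if $p\in A$ lies on the upper arc of $\partial[A]$ and $p\neq a_1$ then $p+b_0\in{\rm int}[A+B]$. But $p+b_0\in\partial[A+B]$ precisely when the normal cone of $p$ in $[A]$ meets the normal cone $N(b_0)$ of $b_0$ in $[B]$; since $b_0$ is a vertex, $N(b_0)$ is a genuine arc around the leftward direction and in general overlaps the normal cones of several consecutive points of $A$ on the upper arc near $a_0$. For instance, with $A=B$ the vertex set of a regular hexagon, taking $p$ to be the vertex adjacent to $a_0$ on the upper arc gives $p+b_0\in\partial[A+B]$; for a regular $n$-gon $A$ paired with a square $B$, roughly $n/8$ points of the upper arc fail your claim, so the error is not $O(1)$ and your bound $\Delta_A^+ - c$ collapses. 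The paper avoids this by pairing $b_0$ with $b_1$ rather than pairing upper arc with $b_0$: for each $x\in A\setminus\{a_0,a_1\}$, with at most two exceptions, \emph{at least one} of $x+b_0$, $x+b_1$ is interior. This works because $N(b_0)$ and $N(b_1)$ are disjoint, so $N(x)$ can meet both only if $N(x)$ contains one of the (at most two) gaps of $S^1\setminus(N(b_0)\cup N(b_1))$, and the choice of $v$ plus the non-strange-pair hypothesis bound the number of such $x$ by two. The symmetric statement holds for $y\in B\setminus\{b_0,b_1\}$ with $a_0,a_1$, and the pigeonhole then distributes the resulting $\ge\Delta_A+\Delta_B-6$ interior points between the two unions.

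You also misidentify the main difficulty. The disjointness you worry about — that $(A+b_0)\cap(a_1+B)$ is small — is actually immediate: after normalizing $a_0=b_1=o$, if $x+b_0=a_1+y$ then comparing the component along $v^\perp$ forces $x=a_1$ and $y=b_0$ (each is the unique extremizer in that direction, by the choice of $v$), so the only overlap is $a_1+b_0$, and similarly for the other union. No transversality or genericity argument is needed beyond this. The hard part that your write-up would have to supply is precisely the $b_0$/$b_1$ dichotomy above, which your upper-arc/lower-arc split cannot reproduce.
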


\begin{proof}
We may assume that $b_1=a_0=o$ (see Fig. \ref{fig:a0+B}).  Observe first that the only repetitions�� 
$x+b_0 = a_1 +y$ or $x+b_1 = a_0 +y$ in these configurations are the points $a_1+b_0$ and $a_0+b_1$ (which are interior to $[A+B]$ by our hypothesis). To prove (\ref{convex-position-case1}), we verify first that
\begin{description}
\item{(i)} for every $x\in A\setminus \{a_0,a_1\}$ except perhaps two of them, at least one of $x + b_0$ or $x + b_1$ is interior in $A+B$,
\item{(ii)} for every $y\in B\setminus \{b_0,b_1\}$ except perhaps two of them, at least one of  $a_0 + y$ or $a_1+y$ is interior in $A+B$.
\end{description}

\begin{figure}[h]
\begin{center}

	\begin{tikzpicture}[scale=0.5]
	

\draw (-6,-6)--(-6,4) (0,-6)--(0,4) (4,-6)--(4,4);
	
	\foreach \i in {(0, 0),  (-2,2),(-5,1),(-6,-2),(-5,-4),(-1,-3)}
	\draw[fill] \i circle(2pt);
	
	\draw[fill, lightgray] (0, 0)--(-2,2)--(-5,1)--(-6,-2)--(-5,-4)--(-1,-3)--(0,0);
	\draw (0, 0)--(-2,2)--(-5,1)--(-6,-2)--(-5,-4)--(-1,-3)--(0,0);
	
	\foreach \i in {(0,0),(2,2),(4,1),(2,-2)}
	\draw[fill] \i circle(2pt);
	
	\draw[fill,lightgray] (0,0)--(2,2)--(4,1)--(2,-2)--(0,0);
		\draw (0,0)--(2,2)--(4,1)--(2,-2)--(0,0);
	
		\foreach \i in {(-1, 2), (0, 0), (-3, -1), (1, -5), (-1, -3), (-6, -2), (2, 3), (-2,
			2), (-3, 3), (0, 4), (1, -1), (-4, 0), (-5, 1), (2, -2), (-5, -4), (-4,
			-4), (2, 2), (3, -2), (-3, -2), (-2, -1), (-3, -6), (4, 1)}
	\draw \i circle(2pt);
	
	\draw (-5,1)--(-3,3)--(0,4)--(2,3)--(4,1)--(3,-2)--(1,-5)--(-3,-6)--(-5,-4);

	\node[left] at (-6,-2) {$a_0$};
	\node[below] at (0,-6) {$a_1=b_0$};
	\node[right] at (4,1) {$b_1$};
	\node at (-2.5,0) {$b_0+A$};
	\node at (2,0.3) {$B+a_1$};
	\end{tikzpicture}
	\hspace{2mm}
		\begin{tikzpicture}[scale=0.55]
	
	
	\draw (-6,-6)--(-6,4) (0,-6)--(0,4) (4,-6)--(4,4);
	
	\foreach \i in {(0, 0),  (-2,2),(-5,1),(-6,-2),(-5,-4),(-1,-3)}
	\draw[fill] \i circle(2pt);
	
	\draw[fill, lightgray] (-2,-1)--(-1,2)--(2,3)--(4,1)--(3,-2)--(-1,-3)--(-2,-1);
	\draw (-2,-1)--(-1,2)--(2,3)--(4,1)--(3,-2)--(-1,-3)--(-2,-1);
	
	\foreach \i in {(0,0),(2,2),(4,1),(2,-2)}
	\draw[fill] \i circle(2pt);
	
	\draw[fill,lightgray] (-2,-1)--(-4,0)--(-6,-2)--(-4,-4)--(-2,-1);
	\draw (-2,-1)--(-4,0)--(-6,-2)--(-4,-4)--(-2,-1);
	
	\foreach \i in {(-1, 2), (0, 0), (-3, -1), (1, -5), (-1, -3), (-6, -2), (2, 3), (-2,
		2), (-3, 3), (0, 4), (1, -1), (-4, 0), (-5, 1), (2, -2), (-5, -4), (-4,
		-4), (2, 2), (3, -2), (-3, -2), (-2, -1), (-3, -6), (4, 1)}
	\draw \i circle(2pt);
	
	\draw (-5,1)--(-3,3)--(0,4)--(2,3)--(4,1)--(3,-2)--(1,-5)--(-3,-6)--(-5,-4)--(-6,-2)--(-5,1);
	
\node[left] at (-6,-2) {$a_0$};
	\node[below] at (0,-6) {$a_1=b_0$};
	\node[right] at (4,1) {$b_1$};
	\node at (-4.2,-2) {$B+a_0$};
	\node at (1.2,0) {$b_1+A$};
	\end{tikzpicture}

\end{center}
\caption{An illustration of the proof of Claim \ref{convex-position-case1}.}\label{fig:a0+B}
\end{figure}
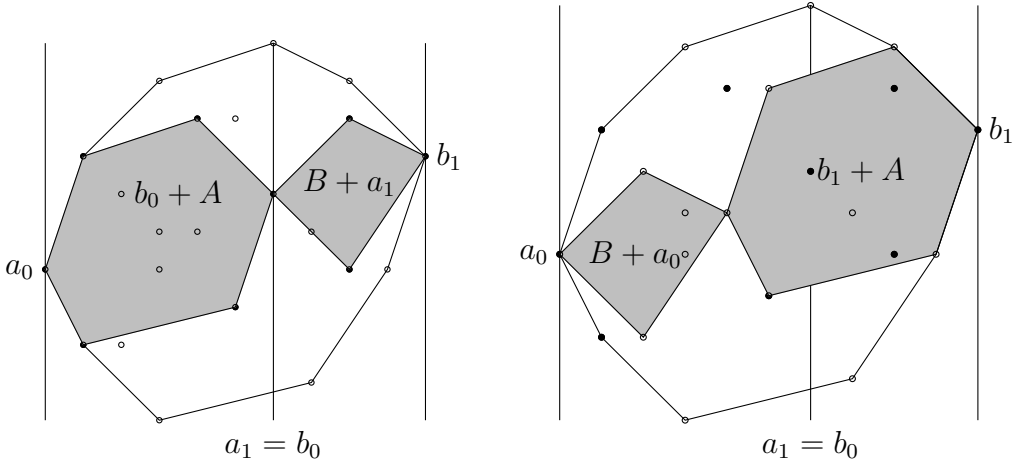
For (i), we note that if both
$x + b_0$ or $x + b_1$ are in $\partial [A+B]$, then they are the end points of a segment translated from $[b_0,b_1]$ and only two such translations have their end-points in $\partial [A+B]$ because $A$ and $B$ are not a strange pair. The argument for (ii) is similar.

Now (i) and (ii) say that counting the interior points of $(A + b_0) \cup (a_1 + B)$ and $ (a_0 + B) \cup (A + b_1)$ except $a_0+b_1$ and $a_1+b_0$ we have altogether at least $|\Delta_A| + |\Delta_B| - 8$ of them. Including the latter we have at least $|\Delta_A| + |\Delta_B| - 6$ of them and at least half of these in either $(A + b_0) \cup (a_1 + B)$ or $(a_0 + B) \cup (A + b_1)$, which yields \eqref{convex-position-case1}.\qed\end{proof}

Let us construct the suitable mixed triangulation of $[A+B]$. For every path $\sigma$  in $\partial A$, we assume that every point of $A$ in $\sigma$ is a vertex of $\sigma$. 
According to (\ref{convex-position-case1}), we may assume that
\begin{equation}
\label{convex-position-case10}
\left|(A \cup B)\cap{\rm int}[A+B]\right|\geq \frac{\Delta_A + \Delta_B} 2  - 3
\end{equation}
Let $a_{\rm upp}$ ($a_{\rm low}$) be the neighboring vertex of $[A]$ to $o$ on the upper (lower) arc of $\partial A$,
and let $b_{\rm upp}$ ($b_{\rm low}$) be the neighboring vertex of $[B]$ to $o$ on the upper (lower) arc of $\partial B$.
We write $\omega^A_{\rm upp}$ and $\omega^A_{\rm low}$ to denote the paths determined by $[o,a_{\rm upp}]$ and 
$[o,a_{\rm low}]$ and $\omega^B_{\rm upp}$ and $\omega^B_{\rm low}$ to denote the paths determined by 
$[o,b_{\rm upp}]$ and 
$[o,b_{\rm low}]$. Next let $\sigma^A_{\rm upp}$ ($\sigma^A_{\rm low}$) be the longest path
on the upper (lower) arc of $\partial[A]$ starting from $o$ such that every segment $s$ of $\sigma^A_{\rm upp}$ ($\sigma^A_{\rm low}$)  satisfies that $s+[o,b_{\rm upp}]$ ($s+[o,b_{\rm low}]$) is a parallelogram that does not intersect
${\rm int}[A]$. Similarly, let $\sigma^B_{\rm upp}$ ($\sigma^B_{\rm low}$) be the longest path
on the upper (lower) arc of $\partial[B]$ starting from $o$ such that every segment $s$ of $\sigma^B_{\rm upp}$ ($\sigma^B_{\rm low}$)  satisfies that $s+[o,a_{\rm upp}]$ ($s+[o,a_{\rm low}]$) is a parallelogram that does not intersect
${\rm int}[B]$. Since $a_1=b_0=o$ is a common point of $\sigma^A_{\rm upp}$, $\sigma^A_{\rm low}$,
 $\sigma^B_{\rm upp}$, $\sigma^B_{\rm low}$, we deduce from (\ref{convex-position-case10}) that
$$
1+(|\sigma^A_{\rm upp}|-1)+(|\sigma^A_{\rm low}|-1)+
 (|\sigma^B_{\rm upp}|-1)+(|\sigma^B_{\rm low}|-1)\geq \frac{\Delta_A + \Delta_B} 2  - 3,
$$
 equivalently,
\begin{equation}
\label{convex-position-case100}
|\sigma^A_{\rm upp}|+|\sigma^A_{\rm low}|+
 |\sigma^B_{\rm upp}|+|\sigma^B_{\rm low}|\geq \frac{\Delta_A + \Delta_B} 2 .
\end{equation}
We construct the mixed subdivision by considering the subdivisions into suitable paralleograms of 
$\sigma^A_{\rm upp}+\omega^B_{\rm upp}$ and $\sigma^B_{\rm upp}+\omega^A_{\rm upp}$
that have $\omega^A_{\rm upp}+\omega^B_{\rm upp}$ in common, and
the subdivisions into suitable parallelograms of 
$\sigma^A_{\rm low}+\omega^B_{\rm low}$ and $\sigma^B_{\rm low}+\omega^A_{\rm low}$
that have $\omega^A_{\rm low}+\omega^B_{\rm low}$ in common (see Figure \ref{fig:mixedconvex}).


\begin{figure}[h]
\begin{center}

	\begin{tikzpicture}[scale=0.55]
	
	
	\draw (-6,-6)--(-6,4) (0,-6)--(0,4) (4,-6)--(4,4);
	
	\foreach \i in {(0, 0),  (-2,2),(-5,1),(-6,-2),(-5,-4),(-1,-3)}
	\draw[fill] \i circle(2pt);
	
	\draw[fill, lightgray] (0, 0)--(-2,2)--(-5,1)--(-6,-2)--(-5,-4)--(-1,-3)--(0,0);
	\draw (0, 0)--(-2,2)--(-5,1)--(-6,-2)--(-5,-4)--(-1,-3)--(0,0);
	
	\foreach \i in {(0,0),(2,2),(4,1),(2,-2)}
	\draw[fill] \i circle(2pt);
	
	\draw[fill,lightgray] (0,0)--(2,2)--(4,1)--(2,-2)--(0,0);
	\draw (0,0)--(2,2)--(4,1)--(2,-2)--(0,0);
	
	\foreach \i in{(-1, 2), (0, 0), (-3, -1), (1, -5), (-1, -3), (-6, -2), (2, 3), (-2,
		2), (-3, 3), (0, 4), (1, -1), (-4, 0), (-5, 1), (2, -2), (-5, -4), (-4,
		-4), (2, 2), (3, -2), (-3, -2), (-2, -1), (-3, -6), (4, 1)}
	\draw \i circle(2pt);
	
	\draw (-5,1)--(-3,3)--(0,4)--(2,3)--(4,1)--(3,-2)--(1,-5)--(-3,-6)--(-5,-4);
	\draw[ultra thick]  (0,0)--(-2,2)  (0,0)--(-1,-3)  (0,0)--(2,2)  (2,-2)--(0,0);
	\draw[ultra thick,dashed] (-2,2)--(-5,1) (-1,-3)--(-5,-4)  (2,2)--(4,1)--(2,-2);
	\draw[pattern=north west lines, pattern color=lightgray]  (0,0)--(-2,2)--(0,4)--(2,2)--(0,0) (-2,2)--(0,4)--(-3,3)--(-5,1)--(-2,2) (0,4)--(2,2)--(4,1)--(2,3)--(0,4) (0,0)--(2,-2)--(1,-5)--(-1,-3)--(0,0) ;
	\draw[pattern=north east lines, pattern color=lightgray]  (0,0)--(-2,2)--(0,4)--(2,2)--(0,0)   (0,0)--(2,-2)--(1,-5)--(-1,-3)--(0,0) (1,-5)--(2,-2)--(4,1)--(3,-2)--(1,-5) (1,-5)-- (-1,-3)--(-5,-4)--(-3,-6)--(1,-5);
		\node[left] at (-6,-2) {$a_0$};
	\node[below] at (0,-6) {$a_1=b_0$};
	\node[right] at (4,1) {$b_1$};
	\node at (-2.5,0) {$A$};
	\node at (2,0) {$B$};
	\end{tikzpicture}
	
	\end{center}
\caption{An illustration of the parallelograms of the mixed subdivision in Case 1.}\label{fig:mixedconvex}
\end{figure}
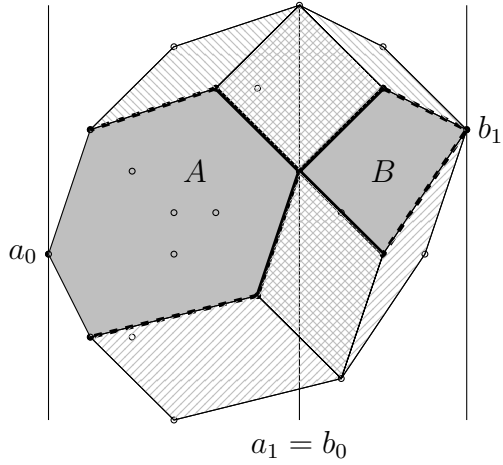
In particular,
$|\omega^A_{\rm upp}|,|\omega^B_{\rm upp}|\geq 1$,
(\ref{tsbig}) and (\ref{convex-position-case100}) yield that
\begin{eqnarray*}
|M_{11}|&\geq &
(|\sigma^A_{\rm upp}|-|\omega^A_{\rm upp}|)|\omega^B_{\rm upp}|+
(|\sigma^B_{\rm upp}|-|\omega^B_{\rm upp}|)|\omega^A_{\rm upp}|+
|\omega^A_{\rm upp}|\cdot |\omega^B_{\rm upp}|+\\
&&+
(|\sigma^A_{\rm low}|-|\omega^A_{\rm low}|)|\omega^B_{\rm low}|+
(|\sigma^B_{\rm low}|-|\omega^B_{\rm low}|)|\omega^A_{\rm low}|+
|\omega^A_{\rm low}|\cdot |\omega^B_{\rm low}|\\
&\geq &(|\sigma^A_{\rm upp}|-|\omega^A_{\rm upp}|)+
(|\sigma^B_{\rm upp}|-|\omega^B_{\rm upp}|)+
|\omega^A_{\rm upp}|+ |\omega^B_{\rm upp}|-1+\\
&&+
(|\sigma^A_{\rm low}|-|\omega^A_{\rm low}|)+
(|\sigma^B_{\rm low}|-|\omega^B_{\rm low}|)+
|\omega^A_{\rm low}|+ |\omega^B_{\rm low}|-1\\
&\geq &\frac{\Delta_A + \Delta_B}2-2=\frac{{\rm tr}(A) + {\rm tr}(B)} 2
\end{eqnarray*}
proving (\ref{convex-position-parallelograms}) in Case~1.\\

\noindent {\bf Case 2 } $A$ and $B$ form a strange pair with $|A|,|B|\geq 4$,
and $[A]$ and $[B]$ are not similar triangles 

We write $\alpha_{\rm upp}$ ($\alpha_{\rm low}$) to denote  the number of segments that the points of $A$ divide the upper (lower) arc of $\partial[A]$. We denote by $b_2$ the third vertex of $[B]$ and by $[x_0,x_1]$ the side of $A$  with $x_1-x_0=t(b_1-b_0)$ for $t>0$. 
For $i=0,1,2$, let $s_i$ be the number of segments that the points of $B$ divide the side of $[B]$ opposite to $b_i$.

\begin{claim}\label{claim:paths} There exists a $v$ such that  one of the following holds:
\begin{align}
&\alpha_{\rm upp}\geq 2 \; \text{and} \; \alpha_{\rm upp}+s_0+s_1\geq \frac12(\Delta_A+\Delta_B), \text{or} \label{convex-position-case21}\\
&\alpha_{\rm low},s_2\geq 2 \;  \text{and} \; 
\alpha_{\rm low}+s_2\geq \frac12(\Delta_A+\Delta_B).\label{convex-position-case22}
\end{align}
\end{claim}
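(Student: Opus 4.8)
The plan is to argue by a "rotating direction" / parity argument analogous to the one used in Case~1 for finding a vector $v$ with non-consecutive supporting vertices, but now carefully chosen so that one of the two arcs of $\partial[A]$ meets the side of $[B]$ opposite $b_0$ and $b_1$ in a controlled way. First I would set up notation: since $[B]$ is a triangle and all three of its outer normals are normals of sides of $[A]$, the side $[x_0,x_1]$ of $[A]$ parallel to $[b_0,b_1]$ exists, and splits $\partial[A]$ into an upper arc (of $\alpha_{\rm upp}$ segments) and a lower arc (of $\alpha_{\rm low}$ segments), with $\alpha_{\rm upp}+\alpha_{\rm low}+(\text{segments on the two vertical sides})=\Delta_A$. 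I would also record $s_0+s_1+s_2=\Delta_B$. The leftmost and rightmost vertices $a_0,a_1$ of $[A]$ are the endpoints of $[x_0,x_1]$ (up to which arc we call upper), and $b_0,b_1$ are the endpoints of the side of $[B]$ parallel to it; the third vertex $b_2$ lies either strictly above or strictly below the line through $b_0,b_1$.

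Next I would choose $v$: take $v$ generic (not parallel to any side of $[A+B]$) but essentially equal to the inner normal direction of the side $[b_0,b_1]$, so that $b_2$ is the unique "top" vertex of $[B]$ when $b_2$ is above, i.e. the upper arc of $\partial[B]$ is the single edge-pair $[b_0,b_2]\cup[b_2,b_1]$ and the lower arc of $\partial[B]$ is the single side $[b_0,b_1]$, which is divided into $s_2$ segments by points of $B$; symmetrically for the other orientation of $b_2$. With this $v$, the lower (resp. upper) arc of $\partial[B]$ carries $s_2$ segments, while the two "sides" of $\partial[B]$ meeting $b_0$ and $b_1$ carry $s_1$ and $s_0$ segments respectively (these are the sides opposite $b_1$ and $b_0$). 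Now partition into two subcases by whether $b_2$ is above or below. If $b_2$ is above the line $b_0b_1$: then the relevant "long side" of $\partial[A+B]$ comes from pairing the upper arc of $\partial[A]$ with the side $[b_0,b_2]$ and the side $[b_2,b_1]$ of $[B]$; I would claim $\alpha_{\rm upp}+s_0+s_1\ge\frac12(\Delta_A+\Delta_B)$, and that $\alpha_{\rm upp}\ge2$ (which holds because $|A|\ge4$ and $[A],[B]$ are not similar triangles forces the upper arc to have at least two segments — if it had only one, $[A]$ would be a triangle with the same three normals as $[B]$, hence similar to $[B]$, contradiction). Symmetrically, if $b_2$ is below, I would get \eqref{convex-position-case22} with the lower arc and $s_2$; the hypothesis $s_2\ge2$ and $\alpha_{\rm low}\ge2$ again follows from $|B|\ge4$ (so $s_2\ge2$ is forced when $[b_0,b_1]$ is the only lower edge but carries $\ge3$ points) and the "not similar triangles" assumption.

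The inequality $\alpha_{\rm upp}+s_0+s_1\ge\frac12(\Delta_A+\Delta_B)$ is the arithmetic heart: writing $\Delta_A=\alpha_{\rm upp}+\alpha_{\rm low}+\lambda$ where $\lambda$ is the number of segments on the two extreme vertical sides of $\partial[A]$ (the ones at $a_0,a_1$), and $\Delta_B=s_0+s_1+s_2$, the claimed bound is equivalent to $2(\alpha_{\rm upp}+s_0+s_1)\ge\alpha_{\rm upp}+\alpha_{\rm low}+\lambda+s_0+s_1+s_2$, i.e. $\alpha_{\rm upp}+s_0+s_1\ge\alpha_{\rm low}+\lambda+s_2$. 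This is where I would exploit the earlier normalization $|F([A],-u_2)\cap A|\ge|F([A],-u_1)\cap A|$ together with the freedom, still present, to relabel upper/lower and to have used the strange-pair structure: the two "vertical" sides of $[A]$ have normals $\pm u$-ish directions, and the side $[x_0,x_1]$ opposite $b_2$ corresponds to a normal of $[A]$, so one of the two arcs must carry at least half of the remaining boundary points; choosing $v$'s orientation (which swaps which arc is "upper") lets us assume it is the upper one, giving $\alpha_{\rm upp}\ge\alpha_{\rm low}+\lambda$, and similarly the side of $[A]$ opposite $b_2$ is one of the three sides whose normals match $[B]$, contributing to $s_0+s_1\ge s_2$ after matching; combining yields the bound. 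The main obstacle I anticipate is precisely this bookkeeping: making sure the choice of $v$ is consistent with the standing normalization and that the degenerate boundary cases ($\alpha_{\rm upp}=1$, or $[A]$ a triangle, or $s_2\le1$) are correctly excluded by the hypotheses $|A|,|B|\ge4$ and "$[A],[B]$ not similar triangles", since the whole claim is a purely combinatorial statement about how many boundary segments land on each arc and it is easy to be off by one or to pick the wrong arc. Once Claim~\ref{claim:paths} is established, \eqref{convex-position-case21} or \eqref{convex-position-case22} feeds into a parallelogram-counting estimate of the same shape as in Case~1 (pairing the chosen arc of $\partial[A]$ with two sides of the triangle $[B]$, using \eqref{tsbig}) to produce a mixed subdivision with $|M_{11}|\ge\frac12({\rm tr}(A)+{\rm tr}(B))$, completing Case~2.
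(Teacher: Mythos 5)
Your proposal contains several genuine gaps, and it does not follow the paper's route.

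First, the bookkeeping is off: you write $\Delta_A=\alpha_{\rm upp}+\alpha_{\rm low}+\lambda$ with $\lambda$ counting segments on ``two vertical sides,'' but the upper and lower arcs already run from the leftmost to the rightmost vertex of $[A]$, so $\alpha_{\rm upp}+\alpha_{\rm low}=\Delta_A$ exactly and there is no third term (this identity is in fact the starting point of the paper's proof). Second, your justification that $\alpha_{\rm upp}\ge 2$ --- ``if it had only one segment, $[A]$ would be a triangle'' --- is false: an upper arc consisting of a single segment only says $[a_0,a_1]$ is a side of $[A]$; the lower arc can carry arbitrarily many sides. (With your orientation, where $b_2$ is the unique top vertex, $\alpha_{\rm upp}\ge 2$ does happen to hold, but the correct reason is that the two sides of $[A]$ with exterior normals matching those of $[b_0,b_2]$ and $[b_1,b_2]$ both have positive $v$-component and therefore both lie on the upper arc of $[A]$.) Third, and most seriously, the inequality you ``claim'' in a given subcase is not forced. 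Since $(\alpha_{\rm upp}+s_0+s_1)+(\alpha_{\rm low}+s_2)=\Delta_A+\Delta_B$, for any fixed $v$ exactly one of the two sums is $\ge\frac12(\Delta_A+\Delta_B)$, but you cannot choose which. You try to fix this by relabelling so that simultaneously $\alpha_{\rm upp}\ge\alpha_{\rm low}$ and $s_0+s_1\ge s_2$, but a single choice of $v$ (and of the vertex playing the role of $b_2$) pins down all four quantities at once, and there is no reason both inequalities can be arranged together --- reversing $v$ swaps the two sums as a whole. So in the unlucky case where only \eqref{convex-position-case22} is satisfied, you still need $\alpha_{\rm low}\ge 2$ and $s_2\ge 2$, and your argument provides neither. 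A further minor issue: the normalization $|F([A],-u_2)\cap A|\ge|F([A],-u_1)\cap A|$ belongs to the proof of Theorem~\ref{triangle-mixed}, not to Section~\ref{secconvex-position}, so it is not ``still present.''

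The paper handles the difficulty you run into with a different case split, which is the idea you are missing. It first observes that if one can choose $v$ so that, for \emph{both} $A$ and $B$, both arcs contain a point strictly between the two supporting lines, then automatically $\alpha_{\rm upp},\alpha_{\rm low},s_2\ge 2$, and whichever of the two sums happens to be $\ge\frac12(\Delta_A+\Delta_B)$ comes with its side conditions for free. Only when no such $v$ exists does one choose $v$ so that $[b_0,b_1]$ carries at least $3$ points of $B$ (possible since $|B|\ge 4$); the failure of the ``good $v$'' condition then forces $[x_0,x_1]$ to have no interior points of $A$ and the adjacent sides of $[A]$ to be parallel to $[b_0,b_2]$ and $[b_1,b_2]$, and the assumption that $[A]$ and $[B]$ are not similar triangles then yields $\alpha_{\rm upp}=s_0=s_1=1$ and $\alpha_{\rm low},s_2\ge 2$, so that $\alpha_{\rm low}+s_2>\alpha_{\rm upp}+s_0+s_1$ and \eqref{convex-position-case22} holds. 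Your proposal has no analogue of this dichotomy, and without it the side conditions in the unfavourable subcase are left unproved.
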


\begin{proof}  Since $\alpha_{\rm upp}+\alpha_{\rm low}=\Delta_A$ and
$s_0+s_1+s_2=\Delta_B$, the claim easily follows if there is a  $v$  such that,  for each  the sets $A$ and $B$, both the upper arc and the lower arc contain a point of the set strictly between the two supporting lines parallel to $v$. 

Otherwise, choose a $v$ such  that the side $[b_0,b_1]$  of $[B]$ contains at least $3$ points of $B$ (this is possible since $|B|\ge 4$). Then  $[x_0,x_1]$ has no other point of $A$  than $x_0,x_1$ and the other side of $[A]$  at $x_i$, $i=0,1$ is parallel to $[b_i,b_2]$. As $[A]$ and $[B]$ are not similar triangles , $[A]$ has some more sides, which in turn yields that $[b_i,b_2]\cap B=\{b_i,b_2\}$ for $i=0,1$. 
In summary, we have 
$\alpha_{\rm upp}=s_0=s_1=1$ and $\alpha_{\rm low},s_2\geq 2$. Since
$\alpha_{\rm low}+s_2>\alpha_{\rm upp}+s_0+s_1$, we conclude (\ref{convex-position-case22}).\qed\end{proof}

To prove (\ref{convex-position-parallelograms}) based on (\ref{convex-position-case21}) and (\ref{convex-position-case22}), we introduce some further notation. After a linear transformation, we may assume that $v$ is an exterior normal to the side $[b_0,b_1]$ of $[B]$.
We say that $p,q\in \partial[A]$ are opposite if there exists a unit vector $w$ such that $w$ is an exterior normal at $p$ and $-w$ is an exterior normal at $q$. If $p,q\in \partial[A]$ are not opposite, then we write
$\overline{pq}$ the arc of $\partial[A]$ connecting $p$ and $q$ and not containing opposite pair of points. 

First we assume that (\ref{convex-position-case21}) holds and $b_2=o$. Since $[x_0,x_1]$ has  exterior normal $v$ and 
$\alpha_{\rm upp}\geq 2$, there exists
$a\in A\backslash\{x_0,x_1\}$ such that $v$ is an exterior normal to $\partial[A]$ at $a$. We write 
$l_{\rm upp}$ and $r_{\rm upp}$ to denote the number of segments the points of $A$ divide the arcs
$\overline{ax_0}$ and $\overline{ax_1}$, respectively. To construct a mixed subdivision, we observe that
every exterior normal $u$ to a side of $[A]$ in $\overline{ax_0}$ satisfies $\langle u,b_0\rangle>0$, and
every exterior normal $w$ to a side of $[A]$ in $\overline{ax_1}$ satisfies $\langle w,b_1\rangle>0$.
We divide
$\overline{ax_0}+[o,b_0]$ into suitable $s_1l_{\rm upp}$ parallelograms, and
$\overline{ax_1}+[o,b_1]$ into suitable $s_0r_{\rm upp}$ parallelograms. It follows from (\ref{tsbig}) that
\begin{eqnarray*}
|M_{11}|&=&s_1l_{\rm upp}+s_0r_{\rm upp}\geq l_{\rm upp}+r_{\rm upp}+s_0+s_1-2=
\alpha_{\rm upp}+s_0+s_1-2\\
&\geq&
\mbox{$\frac12(\Delta_A+\Delta_B)-2=\frac12({\rm tr}(A)+{\rm tr}(B))$}.
\end{eqnarray*}

Secondly we assume that (\ref{convex-position-case22}) holds. Since $s_2\geq 2$, we may assume that
  $o\in ([b_0,b_1]\backslash\{b_0,b_1\})\cap B$. 
For $i=0,1$, we write $s_{2i}$ to denote the number of segments the points of $B$ divide $[o,b_i]$.
Let $\tilde{x}_0$ and $\tilde{x}_1$ be the leftmost and rightmost points of $A$ such that $-v$ is an exterior normal to $\partial[A]$, where possibly $\tilde{x}_0=\tilde{x}_1$. 
Since $[A]$ has  sides parallel to the sides $[b_2,b_0]$ and 
$[b_2,b_1]$ of $[B]$, we deduce that $\tilde{x}_0\neq x_0$ and $\tilde{x}_1\neq x_1$.
To construct a mixed subdivision, we set $m_{\rm low}=0$ if $\tilde{x}_0=\tilde{x}_1$, and
$m_{\rm low}$ to be the number of segments the points of $A$ divide $\overline{\tilde{x}_0,\tilde{x}_1}$
 if $\tilde{x}_0\neq\tilde{x}_1$.
In addition, we write 
$l_{\rm low}\geq 1$ and $r_{\rm low}\geq 1$ to denote the number of segments the points of $A$ divide the arcs
$\overline{\tilde{x}_0x_0}$ and $\overline{\tilde{x}_1x_1}$, respectively.
We divide
$\overline{\tilde{x}_0x_0}+[o,b_0]$ into suitable $l_{\rm low}s_{20}$ parallelograms, and
$\overline{\tilde{x}_1x_1}+[o,b_1]$ into suitable $r_{\rm upp}s_{21}$ parallelograms. In addition, 
 if $\tilde{x}_0\neq\tilde{x}_1$, then we divide 
$[\tilde{x}_0\tilde{x}_1]+[o,b_2]$ into suitable $m_{\rm low}$ parallelograms.
It follows from (\ref{tsbig}) that
\begin{eqnarray*}
|M_{11}|&=&l_{\rm low}s_{20}+r_{\rm low}s_{21}+m_{\rm low}\geq
 l_{\rm low}+r_{\rm low}+m_{\rm low}+s_{20}+s_{21}-2\\
&=& \alpha_{\rm low}+s_2-2
\geq
\mbox{$\frac12(\Delta_A+\Delta_B)-2=\frac12({\rm tr}(A)+{\rm tr}(B))$},
\end{eqnarray*}
finishing the proof of (\ref{convex-position-parallelograms}) in Case~2.\\

\noindent {\bf Case 3 } $[A]$ and $[B]$ are similar triangles and $|A|,|B|\geq 4$

We recall that $s_1, s_2$ and  $s_3$ denote the number of segments the points of $B$ divide the sides of $[B]$ and let $s'_1, s'_2, s'_3$ be the number of segments the points of $A$ divide the corresponding sides of $[A]$. 
We have ${\rm tr}(A)=s_1'+s'_2+s'_3-2$ and ${\rm tr}(B)=s_1+s_2+s_3-2$. We may assume that $s_1$ is the largest among 
the six numbers and that $s'_2\geq s'_3$. Readily
\begin{equation}
\label{m11case3}
|M_{11}|\geq \max\{s_1s_2', s'_1s_2, s'_1s_3\}.
\end{equation}
If $s_2'\geq 3$, then
$$
|M_{11}|\geq 3s_1\geq \mbox{$\frac12$}(s_1+s_2+s_3+s'_1+s'_2+s'_3)>\mbox{$\frac12$}({\rm tr}(A)+{\rm tr}(B)).
$$
If $s_2'=2$, then $s_3'\leq 2$ and
$$
|M_{11}|\geq 2s_1\geq \mbox{$\frac12$}(s_1+s_2+s_3+s'_1+s'_2+s'_3-4)=\mbox{$\frac12$}({\rm tr}(A)+{\rm tr}(B)).
$$
Therefore we assume that $s_2'=s_3'=1$. In particular, we may also assume that $s_2\geq s_3$. Since $s'_1\ge 2$ and $s_2\ge 1$ we have $s'_1s_2\ge s'_1+2s_2-2$. Therefore,
\begin{align*}
|M_{11}|&\ge \max \{s_1, s'_1s_2\}\\
&\ge \frac{1}{2}((s_1+s_2+s_3+s'_1-2)\\
&\ge  \frac{1}{2}(s_1+s_2+s_3+s'_1-2)\\
&=\frac{1}{2}({\rm tr}(A)+{\rm tr}(B)),
\end{align*}
and  we conclude (\ref{convex-position-parallelograms}) in Case~3, as well.
\qed

\section{Proof of Theorem~\ref{TA=TB}}

Let $A=\{a_1,\ldots,a_n\}$. Naturally, $[A+A]$ has a triangulation $\{F+F:\,F\in T_A\}$, which we subdive in order to obtain $M$. We define $M$ to be the collection of the sums of the form
$$
[a_{i_0},\ldots,a_{i_m}]+[a_{i_m},\ldots,a_{i_k}]
$$
where $k\geq 0$, $0\leq m\leq k$, $i_j<i_l$ for $j<l$,  and $[a_{i_0},\ldots,a_{i_k}]\in T_A$.

To show that we obtain a cell decomposition, let
$$
F=[a_{i_0},\ldots,a_{i_k}]\in T_A
$$
be a $k$-simplex with $k>0$ where $i_j<i_l$ for $j<l$, and hence
$$
F+F=\left\{\sum_{i=0}^k\alpha_ja_{i_j}:\,\sum_{i=0}^k\alpha_j=2\;\&\;\forall\,\alpha_j\geq 0\right\}.
$$
We write ${\rm relint}\,C$ to denote the relative interior of a compact convex set $C$. For some $0\leq m\leq k$, 
$\alpha_0,\ldots,\alpha_k\geq 0$ with $\sum_{i=0}^k\alpha_j=2$, we have
$$
\sum_{i=0}^k\alpha_ja_{i_j}\in{\rm relint}\,\left([a_{i_0},\ldots,a_{i_m}]+[a_{i_m},\ldots,a_{i_k}]\right)\subset F+F
$$
if and only if $\sum_{j<m}\alpha_j<1$ and $\sum_{i=0}^m\alpha_j>1$ where we set
$\sum_{j<0}\alpha_j=0$. 
We conclude that $M$ forms a cell decomposition of $[A+A]$.

 For any $d$-simplex $F\in T_A$, and for any $m=0,\ldots,d$, we have constructed one $d$-cell of $M$ that is the sum of an $m$-simplex and a $(d-m)$-simplex. Therefore
$$
\|M\|=|T_A|\sum_{m=0}^d{d \choose m}=2^d|T_A|.
$$

\section{Proof of Corollary~\ref{A=Bstability}}

In this section, let $A\subset \R^2$ be finite and not collinear. We prove four auxiliary statements about $A$. The first is an application of the case $A=B$ of Conjecture~\ref{ruzsabrunnconj} (see Theorem~\ref{A=B}).

\begin{lemma}
\label{|A+A|}
$$
|A+A|\geq 4|A|-\Delta_A-3
$$
\end{lemma}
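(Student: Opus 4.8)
The plan is to combine the case $A=B$ of Conjecture~\ref{ruzsabrunnconj} (established in Theorem~\ref{A=B}, or already in inequality \eqref{[A]=[B]strong} with $B=A$) with the identity \eqref{Eulerpoints} and the elementary bound $\Delta_{A+A}\ge 2\Delta_A$. Concretely, I would first record the three ingredients: from Theorem~\ref{A=B} with $B=A$ we have ${\rm tr}(A+A)\ge 4\,{\rm tr}(A)$; from \eqref{Eulerpoints}, ${\rm tr}(X)=2|X|-\Delta_X-2$ for every finite two-dimensional $X$; and $\Delta_{A+A}\ge 2\Delta_A$.

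The last inequality is the $A=B$ instance of the bound $\Delta_{A+B}\ge\Delta_A+\Delta_B$ already noted in the introduction, but for self-containment I would give the short direct argument: since $[A+A]=2[A]$, every edge $e$ of $[A]$ gives an edge $e+e$ of $[A+A]$ contained in $\partial[A+A]$, and $(A\cap e)+(A\cap e)\subseteq (A+A)\cap(e+e)$. By the one-dimensional Kemperman inequality $|(A\cap e)+(A\cap e)|\ge 2|A\cap e|-1$. Distinct edges of $[A]$ produce subsets of distinct edges of $[A+A]$ that overlap at most in a doubled common vertex $2v$ (which lies in both), so, writing $V$ for the number of vertices (hence edges) of $[A]$ and using $\sum_{e}|A\cap e|=\Delta_A+V$, the number of distinct points of $A+A$ on $\partial[A+A]$ obtained this way is at least $\sum_e\bigl(2|A\cap e|-1\bigr)-V=2(\Delta_A+V)-V-V=2\Delta_A$. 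Thus $\Delta_{A+A}\ge 2\Delta_A$.

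It then only remains to do the algebra. Substituting ${\rm tr}(A+A)=2|A+A|-\Delta_{A+A}-2$ and ${\rm tr}(A)=2|A|-\Delta_A-2$ into ${\rm tr}(A+A)\ge 4\,{\rm tr}(A)$ gives
$$
2|A+A|-\Delta_{A+A}-2\ \ge\ 8|A|-4\Delta_A-8,
$$
hence $2|A+A|\ \ge\ 8|A|-4\Delta_A-6+\Delta_{A+A}\ \ge\ 8|A|-2\Delta_A-6$, that is, $|A+A|\ge 4|A|-\Delta_A-3$. I do not expect a genuine obstacle here: the lemma is a direct corollary of Theorem~\ref{A=B}, and the only mildly non-automatic ingredient, $\Delta_{A+A}\ge 2\Delta_A$, is already recorded in the paper.
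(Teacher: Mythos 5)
Your proof is correct and follows exactly the same route as the paper: combine ${\rm tr}(A+A)\ge 4\,{\rm tr}(A)$ from Theorem~\ref{A=B}, the identity \eqref{Eulerpoints}, and the bound $\Delta_{A+A}\ge 2\Delta_A$, then do the algebra. The only difference is cosmetic: the paper asserts $\Delta_{A+A}\ge 2\Delta_A$ as "readily" true, whereas you supply the short edge-by-edge argument via Kemperman's inequality, and that argument is sound.
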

\proof We have readily $\Delta_{A+A}\geq 2\Delta_A$. Thus  (\ref{Eulerpoints}) and Theorem~\ref{A=B} yield
$$
|A+A|=\frac12\left({\rm tr}(A+A)+\Delta_{A+A}+2\right)\geq
2{\rm tr}(A)+\Delta_{A}+1=4|A|-\Delta_A-3.\qed$$

We note that the estimate of Lemma~\ref{|A+A|} is optimal, the configuration of Theorem \ref{A=B} (b) being an extremal set.

Next we provide the well-known elementary estimate for $|A+A|$ only in terms of boundary points. 

\begin{lemma} Let $m_A$  denote the maximal number of points of $A$ contained in a side of $[A]$. We have,
\label{|A+A|boundary}
$$
|A+A|\geq \frac{\Delta_A^2}4-\frac{\Delta_A(m_A-1)}2.
$$
\end{lemma}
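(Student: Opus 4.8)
The plan is to bound $|A+A|$ from below by looking only at the sums of boundary points of $[A]$ and, more specifically, at a cleverly chosen \emph{chain} of such points. Let $\Delta_A=\Delta$, and let $m=m_A$. Enumerate the points of $A\cap\partial[A]$ cyclically as $p_1,p_2,\dots,p_\Delta$ in the order in which they appear along $\partial[A]$. The key observation is that for a convex curve the partial sums $p_1+p_j$ for $j=1,\dots,\Delta$ are all distinct, and more generally, fixing $i$ and letting $j$ run, the sums $p_i+p_j$ trace out (a translate of) the same convex polygon scaled by a factor, so different ``diagonals'' of the Cayley-type sum grid do not collide too much. I would make this precise by partitioning the index pairs according to the difference $j-i$ taken cyclically.

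First I would set up the following counting. For each $k$ with $0\le k\le \lfloor \Delta/2\rfloor$, consider the set $S_k=\{p_i+p_{i+k} : 1\le i\le \Delta\}$ of ``chords of combinatorial length $k$'' (indices mod $\Delta$). Because $[A]$ is convex, the midpoints $\frac12(p_i+p_{i+k})$ lie on a convex curve and are pairwise distinct \emph{unless} $p_i,\dots,p_{i+k}$ are collinear, i.e. lie on a single side of $[A]$; the number of indices $i$ for which $p_i,\dots,p_{i+k}$ all lie on one side is at most $m-1$ (it is controlled by how many full segments of length $k$ fit inside a side containing at most $m$ points). Hence $|S_k|\ge \Delta-(m-1)$ for each relevant $k$, except for small degeneracies. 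Moreover the sets $S_k$ for distinct $k$ are essentially disjoint: a coincidence $p_i+p_{i+k}=p_{i'}+p_{i'+k'}$ with $k\ne k'$ again forces collinearity, since two chords of a convex polygon with distinct combinatorial lengths can share a midpoint only within a flat side. So $A+A$ contains $\sum_{k=0}^{\lfloor\Delta/2\rfloor}|S_k|$ distinct points up to a controlled overcount coming from sides, which gives roughly $(\Delta/2)\cdot(\Delta-(m-1))$, i.e. the claimed $\frac{\Delta^2}{4}-\frac{\Delta(m-1)}{2}$.

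The main obstacle is making the disjointness and distinctness claims rigorous and bookkeeping the boundary (collinear) corrections so that the total loss is exactly $\frac{\Delta(m-1)}{2}$ and not something larger. The cleanest way is probably to argue on a single side: isolate the side $\ell$ of $[A]$ containing $m_A$ points, treat the ``one-dimensional'' sumset coming from collinear triples via Kemperman's inequality $|X+Y|\ge|X|+|Y|-1$, and treat the genuinely two-dimensional sums via the convex-position distinctness of midpoints. An alternative, and perhaps the intended route given the surrounding material, is to invoke Theorem~\ref{convex-position} (Conjecture~\ref{ruzsabrunnconj-mixed} holds in convex position) applied to $A\cap\partial[A]$ with itself, or to use the elementary fact that a convex polygon with $\Delta$ vertices and no side containing more than $m$ lattice-type points has a triangulation into $\Delta-2$ triangles whose doubling — via $\frac12(A+A)\supset A_T$ — already yields $\mathrm{tr}$-type bounds; converting such a $\mathrm{tr}$ bound back to a cardinality bound through \eqref{Eulerpoints} and $\Delta_{A+A}\ge 2\Delta_A$ produces a quadratic-in-$\Delta$ lower bound of the stated shape. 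I would first try the direct chord-counting argument, since it is self-contained and transparently yields the exact constants, and fall back on the $\mathrm{tr}$/convex-position machinery only if the overcount estimates become unwieldy.
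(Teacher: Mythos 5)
Your primary route (counting the ``chord classes'' $S_k=\{p_i+p_{i+k}\}$) has a genuine gap that is not just a bookkeeping nuisance: the claim that the sums $p_i+p_{i+k}$ (for fixed $k$) are pairwise distinct unless the intervening boundary points are collinear is false. Take the regular hexagon with vertices $p_1,\dots,p_6$ and $k=3$: then $p_1+p_4=p_2+p_5=p_3+p_6$ all equal twice the centre, so $|S_3|=1$, even though no three consecutive vertices are collinear and $m_A=2$. The same collapse happens for any centrally symmetric convex polygon at $k=\Delta_A/2$, and more generally whenever the boundary contains an inscribed parallelogram; neither phenomenon is detected by the ``$p_i,\dots,p_{i+k}$ collinear'' criterion. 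So the per-class bound $|S_k|\ge\Delta_A-(m_A-1)$ does not hold, and restricting to $k<\Delta_A/2$ would still leave both the within-class and the cross-class distinctness unproved. The fallback you sketch (Theorem~\ref{convex-position} or $\mathrm{tr}$-doubling through midpoints) also would not help here: those machines produce bounds that are linear in $|A|$ or behave like a square root via mixed subdivisions, whereas this lemma needs something quadratic in $\Delta_A$.

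The paper instead partitions the boundary points of $A$ not by combinatorial chord length but by \emph{which side of $[A]$ they lie on}, and works only with one of the two monotone chains. Concretely: pick a generic direction so that no side is vertical, split $\partial[A]$ into the upper and lower chains, and assume (after swapping) the upper chain carries at least $\Delta_A/2$ of the boundary points. Label the sides of the upper chain $s_1,\dots,s_k$ left to right and let $A_i=(A\cap s_i)\setminus\{\text{rightmost point of }s_i\}$, so that the $A_i$ are disjoint and $\sum_i|A_i|\ge\Delta_A/2$ while $|A_i|\le m_A-1$. The two key facts, both coming from the monotone slopes along the chain, are that $|A_i+A_j|=|A_i|\cdot|A_j|$ for $i\ne j$ (the two sides are not parallel, so the sum map is injective on $A_i\times A_j$) and that $A_i+A_j$ and $A_{i'}+A_{j'}$ are disjoint for distinct unordered pairs $\{i,j\}\ne\{i',j'\}$ (this is the Minkowski-sum structure of convex chains: each parallelogram cell of the sumset belongs to a unique pair of edge directions). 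Summing $|A_i|\,|A_j|$ over pairs and using the constraints on $\sum|A_i|$ and $\max|A_i|$ yields the stated quadratic bound. This is the missing ingredient in your attempt: grouping by sides gives disjointness for free, while grouping by chord length does not.
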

\proof  We choose a line $l$ not parallel to any side of $[A]$, that we may assume to be a vertical line, and denote by $s_1,\ldots ,s_k$ the sides of $[A]$ on the upper chain of $[A]$ in left to right order. Let $A_i$ be the set obtained from $A\cap s_i $ by removing its rightmost point. We may assume that
$$
|A_1|+\cdots +|A_k|\ge \frac{\Delta_A}{2}.
$$
We observe that, for $1\le i<j\le k$, we have
$$
|A_i+A_j|=|A_i|\cdot |A_j|\; \text{and}\;  (A_i+A_j)\cap (A_{i'}\cap A_{j'})=\emptyset \; \text{if}\;  \{i,j\}\neq \{i',j'\}.
$$ 
It follows that
\begin{align*}
|A+A|&\ge \sum_{1\le i<j\le k} |A_i+A_j|=\sum_{1\le i<j\le k} |A_i|\cdot |A_j|=(\sum_{i=1}^k |A_i|)^2-\sum_{i=1}^k |A_i|^2\\
&\ge \left(\frac{\Delta_A}{2}\right)^2-(m_A-1)\frac{\Delta_A}{2}.\qed
\end{align*}

The following Lemma can be found in Freiman \cite{Fre73}.

\begin{lemma}\label{scover} Let $\ell$ be a line intersecting $[A]$ in $m$ points of $A$. If $A$ is covered by exactly $s$  lines parallel to $\ell$, then
\begin{equation}\label{eq:weak4s}
|A+A|\geq 2|A|+(s-1)m-s.
\end{equation}
Moreover,  
\begin{equation}\label{eq:4s}
|A+A|\ge (4-\frac{2}{s})|A|-(2s-1).
\end{equation}
\end{lemma}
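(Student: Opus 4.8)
The plan is to prove both inequalities by partitioning $A$ according to the $s$ parallel lines, say $\ell = \ell_1, \ell_2, \ldots, \ell_s$ (in the natural order transverse to their common direction), and writing $A_i = A \cap \ell_i$ with $a_i = |A_i| \geq 1$, so that $\sum_{i=1}^s a_i = |A|$. The key structural observation is that sums from different pairs of lines land on different lines: $A_i + A_j$ lies on the line parallel to $\ell$ through the appropriate ``average'' position, and for $i+j$ distinct these lines are distinct. First I would use Kemperman's one-dimensional bound $|A_i + A_j| \geq a_i + a_j - 1$ on each pair of consecutive-in-sum blocks to estimate $|A+A|$ from below by a sum over the $2s-1$ lines carrying $A+A$.

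For \eqref{eq:weak4s}, I would group the contributions: the line through $2\ell_i$ carries at least $2a_i - 1$ points, and each ``mixed'' line through $\ell_i + \ell_j$ with $i < j$ carries at least $a_i + a_j - 1 \geq 1$ points; choosing the bookkeeping so that every $a_i$ is counted twice (once from $2\ell_i$, and the block structure supplies the rest) gives $|A+A| \geq \sum_i (2a_i - 1) + (\text{number of remaining lines}) = 2|A| - s + (2s - 1 - s) \cdot 1$, but this only yields $2|A| - 1$; to get the $+(s-1)m$ term one instead uses that the line $\ell$ itself appears among the parallel lines and $A$ meets it in $m$ points, and that the two extreme lines $\ell_1, \ell_s$ each contribute a full shifted copy. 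Concretely I would argue that $A + A$ contains, on the line through $\ell_1 + \ell$, at least $a_1 + m - 1$ points, and similarly near $\ell_s + \ell$, and iterate this ``sliding'' over all $s$ choices of which line plays the role of $\ell_1$, summing and dividing — this is the standard argument of Freiman and I would essentially reconstruct it, so \eqref{eq:weak4s} follows by a careful but elementary count.

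For \eqref{eq:4s} I would optimize the trade-off: from \eqref{eq:weak4s} we have $|A+A| \geq 2|A| + (s-1)m - s$, and since every line meets $[A]$ in at most $m$ points while $A$ is covered by $s$ of them, $|A| \leq sm$, hence $m \geq |A|/s$. Substituting, $|A+A| \geq 2|A| + (s-1)|A|/s - s = 2|A| + |A| - |A|/s - s = (3 - 1/s)|A| - s$; this is not quite $(4 - 2/s)|A| - (2s-1)$, so I would instead combine \eqref{eq:weak4s} with the second elementary estimate available (Lemma~\ref{|A+A|boundary}, giving $|A+A| \geq \Delta_A^2/4 - \Delta_A(m_A-1)/2$) or with Lemma~\ref{|A+A|}, taking the maximum of the two lower bounds and choosing the worst case of $m$; the maximum of a decreasing and an increasing function of $m$ is minimized at the crossing point, and plugging that crossing value in yields exactly $(4 - 2/s)|A| - (2s-1)$. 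The main obstacle is getting the constants in this last optimization to match exactly: one must be careful about whether to use $\Delta_A$ or $|A|$, about the $m$ versus $m-1$ discrepancies, and about the boundary cases $s = 1, 2$; I expect the bulk of the work is verifying that the two regimes $m$ small and $m$ large both give at least $(4-2/s)|A| - (2s-1)$, with equality approached by grids of the form $\{1,\dots,m\} \times \{1,\dots,s\}$.
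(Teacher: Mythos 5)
Your partition into fibres $A_i = A\cap\ell_i$ and your instinct to use Kemperman line-by-line are exactly right, and you correctly diagnose that the naive count $\sum_i(2a_i-1)+\#(\text{remaining lines})$ only gives $2|A|-1$. But for \eqref{eq:weak4s} you then retreat to ``I would essentially reconstruct Freiman's sliding argument'' without giving the actual step. The concrete mechanism in the paper is short: place $\ell$ as the extreme line $\ell_1$ (so $|A_1|=m$), and bound
\[
|A+A|\ \ge\ |A_1+A|+|(A\setminus A_1)+A_s|\ \ge\ \sum_{i=1}^s\bigl(|A_1|+|A_i|-1\bigr)+\sum_{i=2}^s\bigl(|A_i|+|A_s|-1\bigr)
\ =\ 2|A|+(s-1)\bigl(|A_1|+|A_s|\bigr)-(2s-1),
\]
the two sets on the left being disjoint because $A_1$ and $A_s$ sit on the two extreme lines. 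With $|A_1|=m$ and $|A_s|\ge1$ this is \eqref{eq:weak4s}. Your ``iterate over all $s$ choices of which line plays $\ell_1$ and average'' is not needed and, for a non-extreme $\ell$, would in fact give something weaker.

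For \eqref{eq:4s} the gap is more serious. You correctly note that plugging $m\ge|A|/s$ into \eqref{eq:weak4s} only yields $(3-1/s)|A|-s$, and your idea of balancing an increasing and a decreasing bound is the right shape of argument, but the two lemmas you reach for (Lemma~\ref{|A+A|} and Lemma~\ref{|A+A|boundary}) do not involve $s$ at all, so there is no way to make the $s$-dependence in $(4-2/s)|A|-(2s-1)$ appear from them; that route will not close. What is actually needed is a \emph{second} estimate coming from the same fibre decomposition, counting the $2s-1$ lines of $A+A$ via the doubled fibres and consecutive sums:
\[
|A+A|\ \ge\ \sum_{i=1}^s|2A_i|+\sum_{i=1}^{s-1}|A_i+A_{i+1}|\ \ge\ 4|A|-\bigl(|A_1|+|A_s|\bigr)-(2s-1).
\]
This is decreasing in $x:=|A_1|+|A_s|$, while the displayed strengthening of \eqref{eq:weak4s} above is increasing in $x$; the two cross at $x=2|A|/s$, where both equal $(4-\tfrac2s)|A|-(2s-1)$, and taking the maximum of the two bounds gives \eqref{eq:4s} for every $x$. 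So the balancing variable is $|A_1|+|A_s|$, not $m$, and the complementary bound is internal to this lemma rather than imported from elsewhere; as written, your proof of \eqref{eq:4s} does not go through.
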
 

\begin{proof} We may assume that  $\ell$ is the vertical line through the origin, that $a_1,\ldots ,a_s$ are $s$ points of $A$ ordered left to right  such that $A=\cup_{i=1}^s (A\cap (\ell +a_i))$ and that $|A\cap (\ell +a_1)|=m$. Let  $A_i=A\cap (a_i+\ell)$. Then,
\begin{align*}
|A+A|&= |A_1+A|+|(A\setminus A_1)+A_s|\\
&\ge \sum_{i=1}^s (|A_1|+|A_i|-1)+\sum_{i=2}^s (|A_i|+|A_s|-1)\\
&=2|A|+(s-1)(|A_1|+|A_s|)-(2s-1),
\end{align*}
from which \eqref{eq:weak4s} follows. On the other hand,
\begin{align*}
|A+A|&=\sum_{i=1}^s |2A_i|+\sum_{i=1}^{s-1} |A_i+A_{i+1}|\\
&\ge \sum_{i=1}^s (2|A_i|-1)+\sum_{�=1}^{s-1} (|A_i|+|A_{i+1}|-1)\\
&=4|A|-(|A_1|+|A_s|)-(2s-1).
\end{align*}
If the latter estimate is larger than the former one we obtain \eqref{eq:4s}, otherwise we get the stronger inequality $|A+A|\ge (4-2/s^2)|A|-(2s-1)$.
\qed\end{proof}

\noindent{\bf Proof of Corollary~\ref{A=Bstability} } Let $|A+A|\leq (4-\varepsilon)|A|$ where $\varepsilon\in(0,1)$
and $\varepsilon^2 |A|\geq 48$. 
To simply formulae, we set $\Delta=\Delta_A$ and $m=m_A$.

We deduce from Lemma~\ref{|A+A|} that $\Delta\geq \varepsilon |A|-3$. Substituting this into 
Lemma~\ref{|A+A|boundary} yields
\begin{eqnarray*}
(4-\varepsilon)|A|&\geq& \frac{\Delta^2}4-\frac{\Delta(m-1)}2\geq
 \frac{\Delta(\varepsilon |A|-3)}4-\frac{\Delta(m-1)}2\\
&=&\frac{\Delta}2\cdot (\mbox{$\frac12\varepsilon |A|-m-\frac12$})\geq 
\frac{\varepsilon |A|-3}2\cdot (\mbox{$\frac12\varepsilon |A|-m-\frac12$}).
\end{eqnarray*}
Therefore
$$
\mbox{$\frac12\varepsilon |A|-(m-1)$}\leq \frac{8}{\varepsilon}\left(1-\frac{\varepsilon}4\right)
\left(1+\frac{3}{\varepsilon |A|-3}\right)+\frac32< \frac{12}{\varepsilon}
$$
as $\varepsilon |A|-3\geq \frac{48}{\varepsilon}-3> \frac{12}{\varepsilon}$.
In particular, $m-1>\frac12\varepsilon |A|-\frac{12}{\varepsilon}$.

Next let $l$ be the line determined by a side of $[A]$ containing $m=m_A$ point of $A$, and let $s$ be the number of lines
parallel to $l$ intersecting $A$. According to \eqref{eq:weak4s},
$$
(4-\varepsilon)|A|\geq 2|A|+(s-1)(m-1)-1> 2|A|+(s-1)\mbox{$(\frac12\varepsilon |A|-\frac{12}{\varepsilon})$}-1,
$$
thus first rearranging, and then applying $\varepsilon^2 |A|\geq 48$ yield
$$
2|A|> s\cdot \mbox{$(\frac12\varepsilon |A|-\frac{12}{\varepsilon})$}\geq  s\cdot \mbox{$\frac14\varepsilon |A|$}.
$$
Therefore $s<\frac8\varepsilon$.

We deduce from \eqref{eq:4s}  and $s<\frac8\varepsilon$ that
$$
(4-\varepsilon)|A|> \mbox{$(4-\frac2s)$}|A|-2s>
\mbox{$(4-\frac2s)|A|-\frac{16}{\varepsilon}$}.
$$
Rearranging, and then applying $\varepsilon^2 |A|\geq 48$ imply
$$
s<\frac2\varepsilon\left(1-\frac{16}{\varepsilon^2|A|}\right)^{-1}<
\frac2\varepsilon\left(1+\frac{32}{\varepsilon^2|A|}\right).
\qed$$

\section{Proof of Proposition~\ref{counterexample} }
%

We call  the points of $A$, 
$$
a_0=(0,0),\mbox{ \ }a_1=(-1,-2),\mbox{ \ }a_2=(2,1).
$$

If $k\geq 2$, then we show that every mixed subdivision $M$ corresponding to $T_A$ and $T_B$ satisfies
\begin{equation}
\label{counter11}
|M_{11}|\leq 24.
\end{equation}

We prove (\ref{counter11}) in several steps. First we verify
\begin{eqnarray}
\label{lia1a2}
[a_1,a_2]+l_i&\mbox{ is not an edge of $M$ }& \mbox{ for }i=0,\ldots,k\\
\label{ria1a2}
[a_1,a_2]+r_i&\mbox{ is not an edge of $M$ }&\mbox{  for }i=0,\ldots,k-1.
\end{eqnarray}
For (\ref{lia1a2}), we observe that $a_1+l_{i+1}$ if $i\leq k-1$ or $a_1+l_{i-1}$ if $i\geq 1$  is a point of $A+B$ in 
$[a_1,a_2]+l_i$ different from the endpoints. Similarly, for (\ref{ria1a2}), we observe that $a_1+r_{i+1}$ if $i\leq k-2$ or $a_1+r_{i-1}$ if $i\geq 1$  is a point of $A+B$ in 
$[a_1,a_2]+r_i$ different from the endpoints.

Next, we have
\begin{eqnarray}
\label{liria0a2}
[a_0,a_2]+[l_i,r_i]&\mbox{is not a parallelogram of $M$}& \mbox{ for }i=0,\ldots,k-1\\
\label{li+1ria0a1}
[a_0,a_1]+[r_i,l_{i+1}]&\mbox{is not a parallelogram of $M$}&\mbox{  for }i=0,\ldots,k-1
\end{eqnarray}
as $l_{i+1}\in{\rm int}\,[a_0,a_2]+[l_i,r_i]$ and $l_{i}\in{\rm int}\,[a_0,a_1]+[r_i,l_{i+1}]$.

Let us call the edges of $T_B$ of the form either $[l_i,r_i]$ or $[r_i,l_{i+1}]$  for $i=0,\ldots,k-1$ {\it small edges}, and the edges
of $T_B$ of the form either $[p,l_i]$, $[q,l_i]$ for $i=0,\ldots,k$, or $[p,r_i]$, $[q,r_i]$  for $i=0,\ldots,k-1$ {\it long edges}. In other words, long edges of $T_B$ contain either $p$ or $q$, while small edges of $T_B$ contain neither.

Concerning long edges, we prove that that the number of parallelograms   of $M$ of the form
\begin{equation}
\label{longedge}
\mbox{$e_A+e_B$ for an edge $e_A$ of $T_A$ and a long edge $e_B$ of $T_B$ is at most $12$.}
\end{equation} 
If  $e_A$ is an edge of $T_A$, then there exist at most two cells of $M$ whose side are $p+e_A$. Since $T_A$ has three edges, there are at most six of parallelograms  of $M$ of the form
$e_A+e_B$ where $e_A$ is an edge  of $T_A$ and $e_B$ is an edge of $T_B$ with $p\in e_B$. Since the same estimate holds if $q\in e_B$, we conclude (\ref{longedge}).

Finally,  we prove that that the number of parallelograms   of $M$ of the form
\begin{equation}
\label{smalledge}
\mbox{$e_A+e_B$ for an edge $e_A$ of $T_A$ and a small edge $e_B$ of $T_B$ is at most $12$.}
\end{equation} 
The argument for (\ref{smalledge}) is based on the claim that if
$e_A+e_B$ is a  parallelogram of $M$  for an edge $e_A$ of $T_A$ and a small edge $e_B$ of $T_B$, then there is
a long edge $e'_B$ of $T_B$ such that
\begin{equation}
\label{smalledge0}
\mbox{$e_A+e'_B$ is a neighboring parallelogram of $M$.}
\end{equation} 
 We have $e_A\neq[a_1,a_2]$ according to  (\ref{lia1a2})  and (\ref{ria1a2}). If
$e_A=[a_0,a_1]$, then $e_B=[l_i,r_i]$ for some $i\in\{1,\ldots,k-1\}$ according to      
(\ref{li+1ria0a1}). Now $r_i+e_A$ intersects the interior of $[A+B]$ as $r_i\in{\rm int}\,[A]$, thus
it is the edge of another cell of $M$, as well. This other cell is either a translate of $[A]$, which is impossible by                           
(\ref{lia1a2}), (\ref{ria1a2}), and as $r_i\not\in p+[A],q+[A]$, or of the form $e_A+e'_B$ for an edge $e'_B\neq e_B$
of $T_B$ containing $r_i$. However, $e'_B\neq[r_i,l_{i+1}]$ by (\ref{li+1ria0a1}), therefore $e'_B$ is a long edge.

On the other hand, if $e_A=[a_0,a_2]$, then $e_B=[r_i,l_{i+1}]$ for some $i\in\{1,\ldots,k-1\}$ according to      
(\ref{liria0a2}), and (\ref{smalledge0}) follows as above.

Now if $e_A+e'_B$ is a parallelogram of $M$ for an edge $e_A$ of $T_A$ and a long edge $e'_B$ of $T_B$,
then there is at most one neighboring paralellogram of the form $e_A+e_B$ for a small edge $e_B$ of $T_B$ because 
$e_A+e_B$ does not intersect $e_A+p$ and $e_A+q$. In turn, (\ref{smalledge}) follows from 
(\ref{longedge}) and (\ref{smalledge0}). Moreover, we conclude (\ref{counter11}) from
(\ref{longedge}) and (\ref{smalledge}).

Finally, it follows from (\ref{counter11}) that if $k\geq 145$, then
$$
|M_{11}|\leq 24<\sqrt{4k}=\sqrt{|T_A|\cdot|T_B|}.\qed
$$

\end{document}